\definecolor{commentgreen}{RGB}{2,112,10}
\definecolor{eminence}{RGB}{108,48,130}
\definecolor{frenchplum}{RGB}{149,20,83}
\definecolor{ffqqqq}{RGB}{215,25,28}
\definecolor{cccccc}{RGB}{171,217,233}
\definecolor{cite}{RGB}{44,123,182}
\definecolor{ref}{RGB}{215,25,28}
\lstdefinelanguage{Sage}
{
keywords={load, matrix, from, import},
emph={},
}
\newtheorem*{rep@theorem}{\rep@title}
\newcommand{\newreptheorem}[2]{
\newenvironment{rep#1}[1]{
 \def\rep@title{#2 \ref{##1}}
 \begin{rep@theorem}}
 {\end{rep@theorem}}}
\theoremstyle{plain}
\newtheorem{theorem}{Theorem}[section]
\newtheorem{corollary}[theorem]{Corollary}
\newtheorem{lemma}[theorem]{Lemma}
\newtheorem{proposition}[theorem]{Proposition}
\newtheorem{conjecture}[theorem]{Conjecture}
\theoremstyle{definition}
\newtheorem{definition}[theorem]{Definition}
\newtheorem{example}[theorem]{Example}
\theoremstyle{remark}
\newtheorem{remark}[theorem]{Remark}
\newcommand{\bdelta}{\boldsymbol\delta}
\newcommand{\bepsilon}{\boldsymbol\epsilon}
\newcommand{\bgamma}{\boldsymbol\gamma}
\newcommand{\bsigma}{\boldsymbol\sigma}
\begin{document}
\title[The non-degeneracy invariant of Brandhorst and Shimada's families of Enriques surfaces]{The non-degeneracy invariant of Brandhorst and Shimada's families of Enriques surfaces}

\author[R. Moschetti]{Riccardo Moschetti}
\address{RM: Department of Mathematics G. Peano, University of Turin, via Carlo Alberto 10, 10123 Torino, Italy} 
\email{riccardo.moschetti@unito.it}

\author[F. Rota]{Franco Rota}
\address{FR: School of Mathematics and Statistics, University of Glasgow, Glasgow G12 8QQ, United Kingdom} 
\email{franco.rota@glasgow.ac.uk}

\author[L. Schaffler]{Luca Schaffler}
\address{LS: Dipartimento di Matematica e Fisica, Universit\`a degli Studi Roma Tre, Largo San Leonardo Murialdo 1, 00146, Roma, Italy}
\email{luca.schaffler@uniroma3.it}

\subjclass[2020]{14J28, 14J50, 14Q10}
\keywords{Enriques surface, elliptic fibration, rational curve, automorphism, non-degeneracy invariant}

\begin{abstract}
Brandhorst and Shimada described a large class of Enriques surfaces, called $(\tau,\overline{\tau})$-generic, for which they gave generators for the automorphism groups and calculated the elliptic fibrations and the smooth rational curves up to automorphisms. In the present paper, we give lower bounds for the non-degeneracy invariant of such Enriques surfaces, we show that in most cases the invariant has generic value $10$, and we present the first known example of complex Enriques surface with infinite automorphism group and non-degeneracy invariant not equal to $10$.
\end{abstract}

\maketitle

\section{Introduction}

A fundamental feature of an Enriques surface $Y$ is that it always has an elliptic pencil, and the understanding of these gives information about the geometry of $Y$. More specifically, in the current paper we are interested in studying the so-called \emph{non-degeneracy invariant} $\mathrm{nd}(Y)$. This was introduced in \cite{CD89} and it is defined as follows. Every elliptic pencil can be written as $|2F|$, where $F\in \mathrm{Pic}(Y)$ is called a \emph{half-fiber}. Then, the non-degeneracy invariant $\mathrm{nd}(Y)$ is the maximum $m$ such that there exist half-fibers $F_1,\ldots,F_m$ such that $F_i\cdot F_j=1-\delta_{ij}$. Back to geometry, if $\mathrm{nd}(Y)=10$, then $Y$ can be realized as a degree $10$ surface in $\mathbb{P}^5$ given by the intersection of $10$ cubics (see the discussion in \cite[\S\,2.3]{DM19}). In a way, $\mathrm{nd}(Y)$ can be thought of as a way of measuring how far we are from such a projective realization.

In characteristic different from $2$, it is known that $4\leq\mathrm{nd}(Y)\leq10$. While the upper bound simply follows from the fact that $\mathrm{Num}(Y)$, the group of divisors on $Y$ modulo numerical equivalence, has rank $10$, the lower bound is a recent result \cite{MMV22b}. The non-degeneracy invariant is known to be $10$ for Enriques surfaces without smooth rational curves \cite[Theorem~3.2]{Cos85} and for generic Enriques surfaces containing smooth rational curves (see \cite[\S\,4.2]{DM19} and \cite[Lemma~3.2.1]{Cos83}). The non-degeneracy invariant was also computed for Enriques surfaces with finite automorphism group \cite{DK24}, and for specific examples of special Enriques surfaces with smooth rational curves and infinite automorphism group (see \cite[\S\,4.1--\S\,4.3]{Dol18} and \cite[\S\,5]{MRS22Paper}). Otherwise, computing $\mathrm{nd}(Y)$ is in general a hard problem as it is difficult to understand all the elliptic fibrations on $Y$, and the knowledge of their orbits under automorphisms is not sufficient to determine it.

In the present paper, we work over $\mathbb{C}$ (see Remark~\ref{rmk:realizable-ADE-sublattices}) and turn to a specific class of Enriques surfaces which were introduced by Brandhorst and Shimada in \cite{BS22}. These are called \emph{$(\tau,\overline{\tau})$-generic Enriques surfaces}, where $\tau$ is the ADE-type of a lattice spanned by a set of smooth rational curves on $Y$, and $\overline{\tau}$ is the ADE-type of its primitive closure in $\mathrm{Num}(Y)$. The $184$ lattice-theoretic possibilities for all pairs $(\tau,\overline{\tau})$ were classified in \cite{Shi21}. Of these, $155$ are obtained by \emph{realizable} families of $(\tau,\overline{\tau})$-generic Enriques surfaces (see Definition~\ref{def:realizable-Enriques}). For these families, Brandhorst and Shimada study $\mathrm{Aut}(Y)$ and the sets of smooth rational curves and elliptic fibrations on $Y$ up to automorphisms. Combining this information with \cite{MRS22Paper,MRS22Code}, we prove the following result. In the statement we do not include the values of the non-degeneracy invariants for the families $1,172,184$, since they are already known, see Remark~\ref{rmk:nd-for1-172-184}.

\begin{theorem}[Theorem~\ref{thm:nds-of-tautaubar-generic-Enriques-surfaces} and Theorem~\ref{thm:nd-for-Enriques-145}]
\label{thm:main}
For an integer $i\in \{1,\ldots,184\}$, let $Y_i$ be the $i$-th realizable $(\tau,\overline{\tau})$-generic Enriques surface in \cite[Table~1]{BS22} with $i\neq1,172,184$. Then,
\begin{enumerate}

\item $\mathrm{nd}(Y_{145})=4$.

\item We have the lower bounds

\begin{table}[H]
\begin{tabular}{cccc}
$\mathrm{nd}(Y_{84})\geq9$, & $\mathrm{nd}(Y_{85})\geq7$, & $\mathrm{nd}(Y_{121})\geq9$, & $\mathrm{nd}(Y_{122})\geq7$,\\
$\mathrm{nd}(Y_{123})\geq7$, & $\mathrm{nd}(Y_{143})\geq8$, & $\mathrm{nd}(Y_{144})\geq8$, & $\mathrm{nd}(Y_{158})\geq9$,\\
$\mathrm{nd}(Y_{159})\geq7$, & $\mathrm{nd}(Y_{171})\geq8$, & $\mathrm{nd}(Y_{176})\geq7$. &
\end{tabular}
\end{table}

\item Finally, in the remaining $140$ cases, $\mathrm{nd}(Y_i)=10$.

\end{enumerate}
\end{theorem}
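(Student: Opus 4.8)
The plan is to reduce the computation of $\mathrm{nd}(Y_i)$ to a finite combinatorial search inside $\mathrm{Num}(Y_i)\cong U\oplus E_8=:E_{10}$, using the standard fact that half-fibers are exactly the primitive nef isotropic classes, so that $\mathrm{nd}(Y_i)$ is the largest $m$ for which there exist half-fibers $F_1,\ldots,F_m$ with $F_i\cdot F_j=1-\delta_{ij}$. The observation that makes the search effective is that $\mathrm{Aut}(Y_i)$ acts on $\mathrm{Num}(Y_i)$ preserving the intersection form and the nef cone, hence sends half-fibers to half-fibers and non-degenerate sequences to non-degenerate sequences. I would therefore import Brandhorst and Shimada's data from \cite{BS22} for each family: the lattice $\mathrm{Num}(Y_i)$, explicit generators of $\mathrm{Aut}(Y_i)$ as isometries, and orbit representatives for the genus-one fibrations, equivalently for the half-fibers.

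For the lower bounds in parts (2) and (3), starting from these half-fiber representatives I would apply words in the automorphism generators to produce a large finite set $S_i$ of half-fibers; since every element of $S_i$ is an automorphic image of a half-fiber, no nef-check is needed. I would then form the graph on $S_i$ joining two classes precisely when their intersection number equals $1$ (note $f^2=0$ holds for all $f\in S_i$) and search for a clique realizing a non-degenerate isotropic sequence. The size of the largest clique found is a lower bound for $\mathrm{nd}(Y_i)$; this is the step implemented via \cite{MRS22Code}. In exactly $140$ families the search returns a sequence of length $10$, and since $\mathrm{nd}(Y_i)\le 10$ by $\operatorname{rank}\mathrm{Num}(Y_i)=10$, we conclude $\mathrm{nd}(Y_i)=10$, proving (3); in the $11$ families of (2) the search produces sequences only of length $7$, $8$, or $9$, yielding the stated bounds.

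For part (1) the lower bound $\mathrm{nd}(Y_{145})\ge 4$ is the search output (and is in any case forced by \cite{MMV22b}); the real content is the matching upper bound $\mathrm{nd}(Y_{145})\le 4$, which cannot invoke the rank bound and must rule out \emph{every} non-degenerate sequence of half-fibers of length $5$. Here I would exploit the rigid $(-2)$-curve configuration of $Y_{145}$: using the finitely many orbits of smooth rational curves and the nef-cone description they determine, I would classify the half-fibers finely enough that the conditions $F_i^2=0$ and $F_i\cdot F_j=1$ force a controlled, finite configuration, and then check by exhaustion that no five half-fibers are pairwise non-degenerate. This argument is carried out separately as Theorem~\ref{thm:nd-for-Enriques-145}.

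The main obstacle is precisely this last upper bound. For the $140$ generic families the upper bound is free, so all the difficulty is in making the orbit generation and clique search deep enough to exhibit length-$10$ sequences; once found, correctness is a direct intersection-number verification. By contrast, bounding $\mathrm{nd}$ strictly below $10$ is the genuinely hard direction, since it requires controlling \emph{all} half-fibers rather than exhibiting a single good sequence, and I expect the proof that $\mathrm{nd}(Y_{145})\le 4$ — turning the a priori infinite set of half-fibers into a finite, checkable problem via the curve configuration — to be where the essential work lies.
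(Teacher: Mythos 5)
Your plan for parts (2) and (3) is the paper's: feed Brandhorst--Shimada's curves $\mathcal{R}_{\mathrm{temp}}$ (enlarged by images under automorphisms in $\mathcal{H}$ where needed, namely for $i=43,78,84,121,158$) into the search of \cite{MRS22Code}, certify the output sequences by direct intersection-number checks, and close the $140$ cases of (3) with the rank bound. The gap is in part (1), where you correctly locate the difficulty but stop at restating it. You say you would ``classify the half-fibers finely enough'' using the $(-2)$-curve configuration and then ``check by exhaustion,'' but two essential mechanisms are missing. First, there is no a priori reason that half-fibers on $Y_{145}$ are controlled by the rational curves at all: a primitive nef isotropic class need not be supported on $(-2)$-curves. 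The paper's bridge is Proposition~\ref{prop:CndEqualNd}: because (by the Shimada--Brandhorst classification \cite{SB20ellipticfibrations}) every elliptic fibration on $Y_{145}$ has a fiber or half-fiber supported on smooth rational curves, one gets $\mathrm{nd}(Y_{145})=\mathrm{cnd}(Y_{145})$, and only after this reduction may one restrict attention to classes in $\mathsf{HF}(Y,\mathcal{R}(Y))$.

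Second, ``exhaustion'' is not finite in any naive sense: $\mathrm{aut}(Y_{145})$ is infinite dihedral, $\mathcal{R}(Y_{145})$ contains the infinite family $R_{8,k}=\bgamma^k(R_8)$, and there are infinitely many half-fiber classes. The paper makes the problem checkable by classifying the supported fibrations into four types up to $\mathrm{aut}(Y)$ ($\widetilde{A}_7^{\mathrm{HF}}$, $\widetilde{A}_1^{\mathrm{F}}+\widetilde{E}_7^{\mathrm{F}}$, $\widetilde{D}_8^{\mathrm{F}}$, $\widetilde{E}_8^{\mathrm{F}}$), and then computing the intersection of a fixed representative against an entire orbit as a closed-form quadratic in the exponent $k$ of the infinite-order generator, e.g. $D_a\cdot D_{b,k}=k^2+\tfrac{1}{2}(-1)^k+\tfrac{1}{2}$. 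These polynomial formulas show that the four types can occur at most $1,1,2,2$ times respectively in a single isotropic sequence, and a final case analysis (the $\widetilde{A}_7$ half-fiber meets every $\widetilde{E}_8$ class in $2$, so those types never coexist; and only one $\widetilde{D}_8$ class meets a given $\widetilde{E}_8$ class in $1$, so the sole surviving length-$5$ type combination is impossible) rules out every candidate sequence of length $5$. Without these orbit-parametrized intersection computations, or some substitute for them, your exhaustion step cannot terminate, so as written the upper bound $\mathrm{nd}(Y_{145})\le 4$ --- the actual content of part (1) --- remains unproved.
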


For each surface $Y_i$ in the statement of Theorem~\ref{thm:main}, we provide an explicit sequence of half-fibers on $Y_i$ realizing the claimed lower bound for $\mathrm{nd}(Y_i)$, see Section~\ref{App:TableNonDeg} for a reference. For the Enriques surface $Y_{145}$, using the work of Brandhorst and Shimada, the automorphism group of $Y_{145}$ acts on cohomology as the infinite dihedral group. We give a full combinatorial description of its action on the set of smooth rational curves. We then assemble this information to compute the value of the non-degeneracy invariant. The details of the proof are spelled out in \S\,\ref{sec:145}. To our knowledge, $Y_{145}$ is the first example of Enriques surface with infinite automorphism group and $\mathrm{nd}(Y)<10$. Different aspects of the Enriques surfaces $Y_{145}$ have been studied in the literature: they appear in \cite{BP83} as examples with ``small'' infinite automorphism group, their K3 covers have zero entropy \cite[Remark~5.5]{BM22}, and some non-extendable isotropic sequences on them were computed in \cite[\S\,4]{MMV22a}.

The strategy we used to show that $\mathrm{nd}(Y_{145})=4$ applies in principle to the remaining $11$ cases in Theorem~\ref{thm:main}~(2). However, the argument is harder to replicate: both the automorphism groups and the sets of orbit representatives of smooth rational curves are more complex. This will be object of future study.

\subsection*{Acknowledgements}
We would like to thank Simon Brandhorst, Igor Dolgachev, Shigeyuki Kond\=o, Gebhard Martin, Giacomo Mezzedimi, Ichiro Shimada, and Davide Cesare Veniani for helpful conversations. We also thank the anonymous referee for useful insights about the Enriques surface $Y_{145}$. This research was partially funded by the project GNSAGA - INdAM ``Numerical Invariants of Nodal Enriques Surfaces''. During the preparation of the paper, the first author was partially supported by PRIN 2017 Moduli and Lie theory, and by MIUR: Dipartimenti di Eccellenza Program (2018-2022) - Dept. of Math. Univ. of Pavia. The second author is supported by EPSRC grant EP/R034826/1 and acknowledges support from the Deutsche Forschungsgemeinschaft (DFG, German Research Foundation) under Germany's Excellence Strategy -- EXC-2047/1 -- 390685813. The third author was supported by ``Programma per Giovani Ricercatori Rita Levi Montalcini'' and partially supported by the projects PRIN2017 ``Advances in Moduli Theory and Birational Classification'' and PRIN2020 ``Curves, Ricci flat varieties and their Interactions''. R. Moschetti and L. Schaffler are members of the GNSAGA - INdAM and were both partially supported by the project PRIN 2022 ``Moduli Spaces and Birational Geometry'' -- CUP E53D23005790006.

\section{Preliminaries on Enriques surfaces and the non-degeneracy invariant}

\subsection{Enriques surfaces and the \texorpdfstring{$E_{10}$}{Lg} lattice}

An \emph{Enriques surface} $Y$ is a smooth minimal projective connected algebraic surface of Kodaira dimension zero such that $h^0(Y,\omega_Y)=h^1(Y,\mathcal{O}_Y)=0$. For an Enriques surface, the canonical class $K_Y$ is the unique torsion element of $\mathrm{Pic}(Y)$ (more specifically, $2K_Y\sim0$), and after quotienting by it we obtain $S_Y:=\mathrm{Num}(Y)$, i.e. the group of divisors on $Y$ modulo numerical equivalence. The intersection product among curves endows $S_Y$ with the structure of a \emph{lattice}: a finitely generated free abelian group $L$ of finite rank together with a non-degenerate symmetric bilinear form $b\colon L\times L\rightarrow\mathbb{Z}$. As a lattice, $S_Y$ is isometric to $U\oplus E_8$, where $U$ is the hyperbolic lattice $\left(\mathbb{Z}^2,\left(\begin{smallmatrix}0&1\\1&0\end{smallmatrix}\right)\right)$ and $E_8$ is the negative definite root lattice associated to the corresponding Dynkin diagram. We review root lattices more in detail in \S\,\ref{sec:root-lattices-and-tau-taubar-invariant}, but first we recall an alternative realization of the lattice $U\oplus E_8$.

\begin{definition}
The \emph{$E_{10}$ lattice} is defined to be $\mathbb{Z}^{10}$ together with the intersection form associated to the canonical basis $e_1,\ldots,e_{10}$ as represented in Figure~\ref{fig:E10}: $e_i^2=-2$ and $e_i\cdot e_j=1$ if the corresponding vertices are joined by an edge, and zero otherwise. A direct check shows that $E_{10}$ is even, unimodular, and of signature $(1,9)$ (see \cite[Chapter~V]{Ser73} for this standard terminology). It follows by \cite[Theorem~1]{Mil58} that $E_{10}$ is isometric to $U\oplus E_8$. Therefore, for any Enriques surface $Y$, $S_Y$ is isometric to $E_{10}$.
\end{definition}

In the current paper, it will be crucial to work with $\mathbb{Z}$-bases of $S_Y$ in the following form.

\begin{definition}
Let $Y$ be an Enriques surface and let $\mathcal{B}=\{B_1,\ldots,B_{10}\}$ be a basis of $S_Y$. We call $\mathcal{B}$ an \emph{$E_{10}$-basis} if the map $B_i\mapsto e_i$ extends to an isometry between $S_Y$ and $E_{10}$.
\end{definition}

\begin{figure}
\begin{tikzpicture}

\draw  (0,0)-- (1,0);
\draw  (1,0)-- (2,0);
\draw  (2,0)-- (3,0);
\draw  (3,0)-- (4,0);
\draw  (4,0)-- (5,0);
\draw  (5,0)-- (6,0);
\draw  (6,0)-- (7,0);
\draw  (7,0)-- (8,0);
\draw  (2,0)-- (2,1);

\fill [color=black] (0,0) circle (2.4pt);
\fill [color=black] (1,0) circle (2.4pt);
\fill [color=black] (2,0) circle (2.4pt);
\fill [color=black] (3,0) circle (2.4pt);
\fill [color=black] (4,0) circle (2.4pt);
\fill [color=black] (5,0) circle (2.4pt);
\fill [color=black] (6,0) circle (2.4pt);
\fill [color=black] (7,0) circle (2.4pt);
\fill [color=black] (8,0) circle (2.4pt);
\fill [color=black] (2,1) circle (2.4pt);

\draw[color=black] (2.4,1) node {$e_1$};
\draw[color=black] (0,-0.4) node {$e_2$};
\draw[color=black] (1,-0.4) node {$e_3$};
\draw[color=black] (2,-0.4) node {$e_4$};
\draw[color=black] (3,-0.4) node {$e_5$};
\draw[color=black] (4,-0.4) node {$e_6$};
\draw[color=black] (5,-0.4) node {$e_7$};
\draw[color=black] (6,-0.4) node {$e_8$};
\draw[color=black] (7,-0.4) node {$e_9$};
\draw[color=black] (8,-0.4) node {$e_{10}$};

\end{tikzpicture}
\caption{The $E_{10}$ lattice.}
\label{fig:E10}
\end{figure}
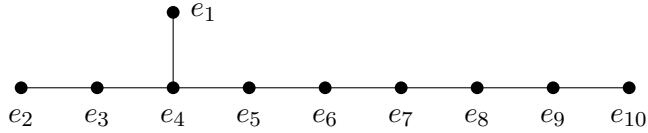

\subsection{The non-degeneracy invariant of an Enriques surface}

Given an elliptic fibration $f\colon Y\rightarrow\mathbb{P}^1$ on an Enriques surface $Y$, then $f$ has exactly two multiple fibers $2F$ and $2F'$ \cite{BHPV04}. The curves $F,F'$ are called the \emph{half-fibers} of the elliptic fibration. The half-fibers on an Enriques surface can be used to define the following invariant of $Y$.

\begin{definition}
Let $Y$ be an Enriques surface and let $m$ be the maximum for which there exist half-fibers $F_1,\ldots,F_m$ on $Y$ such that $F_i\cdot F_j=1-\delta_{ij}$. Then $m$ is called the \emph{non-degeneracy invariant} of $Y$ and it is denoted by $\mathrm{nd}(Y)$.
\end{definition}

As we briefly reviewed in the Introduction, if the Enriques surface $Y$ is unnodal, or general nodal, then $\mathrm{nd}(Y)=10$. Otherwise, it is in general a hard problem to compute $\mathrm{nd}(Y)$, which is known only in few cases (for instance, the Enriques surfaces with finite automorphism group). Motivated by this, in \cite{MRS22Paper} we introduced a combinatorial version of the non-degeneracy invariant which we now review. First we recall that an \emph{elliptic configuration} on $Y$ is a curve $C$ appearing in Kodaira's classification of singular fibers in elliptic fibrations \cite[Chapter~V, Table~3]{BHPV04}, but not a multiple of such a fiber (note that $C$ does not have to be primitive in $S_Y$). In this case, we have that either $|C|$ is an elliptic pencil or $|2C|$ is an elliptic pencil of which $C$ is a half-fiber \cite[Chapter~VIII, Lemma~17.3]{BHPV04}.

\begin{definition}
\label{def:relative-cnd-invariant}
Let $\mathcal{R}$ be a finite collection of smooth rational curves on an Enriques surface $Y$. Let $\mathsf{HF}(Y,\mathcal{R})$ be the set of numerical equivalence classes $h\in S_Y$ in the following form:
\begin{itemize}

\item $h=[C]$, where $C$ is an elliptic configurations with irreducible components in $\mathcal{R}$ and $C$ is a half-fiber of an elliptic pencil on $Y$;

\item $h=\frac{1}{2}[C]$, where $C$ is an elliptic configurations with irreducible components in $\mathcal{R}$ and $C$ is a fiber of an elliptic pencil on $Y$.

\end{itemize}
Then we define the \emph{combinatorial non-degeneracy invariant of $Y$ relative to $\mathcal{R}$}, $\mathrm{cnd}(Y,\mathcal{R})$, to be the maximum $m$ such that there exist $f_1,\ldots,f_m\in \mathsf{HF}(S,\mathcal{R})$ such that $f_i\cdot f_j=1-\delta_{ij}$.
\end{definition}

As $\mathrm{cnd}(Y,\mathcal{R})$ only considers classes of half-fibers supported in $\mathcal{R}$, it gives a lower bound for $\mathrm{nd}(Y)$. In \cite{MRS22Code} we implemented a code that computes $\mathrm{cnd}(Y,\mathcal{R})$ by listing all elliptic configurations supported in $\mathcal{R}$ and then, with a recursive procedure, finds all the maximal sequences $f_1,\ldots,f_m\in \mathsf{HF}(S,\mathcal{R})$ satisfying $f_i\cdot f_j=1-\delta_{ij}$. The procedure terminates as $\mathcal{R}$ is a finite set of smooth rational curves.

\subsection{The combinatorial non-degeneracy invariant \texorpdfstring{$\mathrm{cnd}(Y)$}{Lg}}

We now introduce an additional version of non-degeneracy invariant which explores the behavior of $\mathrm{cnd}(Y,\mathcal{R})$ as $\mathcal{R}$ varies.

\begin{definition}
Let $Y$ be an Enriques surface. We define the \emph{combinatorial non-degeneracy invariant of $Y$} as follows:
\[
\mathrm{cnd}(Y)=\max\{\mathrm{cnd}(Y,\mathcal{R})\mid\mathcal{R}\subseteq\mathcal{R}(Y)~\textrm{is finite}\}.
\]
\end{definition}

\begin{remark}
It is an immediate consequence of the definition that we have the following order relations among the different invariants we introduced above:
\[
\mathrm{cnd}(Y,\mathcal{R})\leq\mathrm{cnd}(Y)\leq\mathrm{nd}(Y).
\]
\end{remark}

The next proposition pinpoints a (nonempty) class of Enriques surfaces where the invariants $\mathrm{cnd}(Y)$ and $\mathrm{nd}(Y)$ are equal.

\begin{proposition}
\label{prop:CndEqualNd}
Let $Y$ be an Enriques surface such that every elliptic fibration admits a fiber or a half-fiber supported on the union of smooth rational curves. Then we have that $\mathrm{nd}(Y)=\mathrm{cnd}(Y)$.
\end{proposition}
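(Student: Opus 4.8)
The plan is to prove the only nontrivial inequality, namely $\mathrm{nd}(Y)\leq\mathrm{cnd}(Y)$, since $\mathrm{cnd}(Y)\leq\mathrm{nd}(Y)$ holds for every Enriques surface by the remark preceding the statement. Write $m=\mathrm{nd}(Y)$ and fix half-fibers $F_1,\ldots,F_m$ on $Y$ realizing this value, so that $F_i\cdot F_j=1-\delta_{ij}$. The goal is to produce a single finite collection $\mathcal{R}$ of smooth rational curves together with elements $f_1,\ldots,f_m\in\mathsf{HF}(Y,\mathcal{R})$ satisfying $f_i\cdot f_j=1-\delta_{ij}$; such a sequence forces $\mathrm{cnd}(Y,\mathcal{R})\geq m$ and hence $\mathrm{cnd}(Y)\geq m$, which is exactly what we want.

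For each index $i$, the half-fiber $F_i$ determines the elliptic pencil $|2F_i|$, and I would apply the hypothesis to the corresponding elliptic fibration. This yields an elliptic configuration $C_i$, with all irreducible components smooth rational curves, which is either a half-fiber or a (reduced) fiber of $|2F_i|$. The crucial point is that, in either case, the associated element of $\mathsf{HF}$ equals $[F_i]$ in $S_Y$. Indeed, if $C_i$ is a half-fiber of $|2F_i|$, then $C_i$ is numerically equivalent to $F_i$: the two half-fibers of a pencil differ by the $2$-torsion class $K_Y$, which vanishes in $S_Y=\mathrm{Num}(Y)$. Hence $f_i:=[C_i]=[F_i]$ lies in $\mathsf{HF}(Y,\mathcal{R}_i)$ by the first bullet of Definition~\ref{def:relative-cnd-invariant}, where $\mathcal{R}_i$ denotes the finite set of components of $C_i$. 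If instead $C_i$ is a fiber of $|2F_i|$, then $[C_i]=2[F_i]$ in $S_Y$, so that $f_i:=\tfrac{1}{2}[C_i]=[F_i]$ lies in $\mathsf{HF}(Y,\mathcal{R}_i)$ by the second bullet.

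Setting $\mathcal{R}=\bigcup_{i=1}^m\mathcal{R}_i$, a finite union of finite sets of smooth rational curves, all of $f_1,\ldots,f_m$ belong to $\mathsf{HF}(Y,\mathcal{R})$. Since $f_i=[F_i]$ for every $i$, the required intersection numbers follow immediately from those of the $F_i$: $f_i\cdot f_j=F_i\cdot F_j=1-\delta_{ij}$. This exhibits a length-$m$ admissible sequence relative to $\mathcal{R}$ and completes the argument.

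I do not expect a serious obstacle here: the proof is a translation of each geometric half-fiber $F_i$ into a combinatorial representative visibly supported on $\mathcal{R}(Y)$, and the real content is carried by the hypothesis. The only points deserving care are the two numerical identifications used above---that numerically equivalent half-fibers of a fixed pencil coincide in $S_Y$, and that a whole fiber is numerically twice a half-fiber---both of which are standard consequences of the structure of elliptic pencils on Enriques surfaces recalled in this section.
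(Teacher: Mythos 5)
Your proposal is correct and follows essentially the same route as the paper's proof: fix half-fibers $F_1,\ldots,F_m$ realizing $\mathrm{nd}(Y)=m$, use the hypothesis to replace each $|2F_i|$ by an elliptic configuration $C_i$ supported on smooth rational curves, identify $[C_i]$ (resp.\ $\tfrac{1}{2}[C_i]$) with $f_i=[F_i]$ in $S_Y$, and conclude via $m\leq\mathrm{cnd}(Y,\mathcal{R})\leq\mathrm{cnd}(Y)\leq\mathrm{nd}(Y)$ with $\mathcal{R}$ the finite union of the components. Your write-up only makes explicit the two numerical identifications the paper leaves implicit (half-fibers differ by $K_Y$, which dies in $\mathrm{Num}(Y)$, and a fiber is numerically $2F_i$); the lone quibble is the parenthetical ``(reduced)'' --- a Kodaira fiber need not be reduced as a divisor --- but this plays no role in the argument.
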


\begin{proof}
Let $m=\mathrm{nd}(Y)$ and let $f_1=[F_1],\ldots,f_m=[F_m]$ be numerical equivalence classes of half-fibers on $Y$ such that $f_i\cdot f_j=1-\delta_{ij}$. By hypothesis, for all $i\in\{1,\ldots,m\}$, $|2F_i|=|nC_i|$ where $\mathrm{Supp}(C_i)=\cup_jR_j^{(i)}$ for some smooth rational curves $R_j^{(i)}$ and $n=1$ or $2$ if $C_i$ is a fiber or a half-fiber respectively. Define $\mathcal{R}=\{R_j^{(i)}\mid i,j\}$. Then
\[
\mathrm{nd}(Y)=m\leq\mathrm{cnd}(Y,\mathcal{R})\leq\mathrm{cnd}(Y)\leq\mathrm{nd}(Y)\implies\mathrm{nd}(Y)=\mathrm{cnd}(Y).\qedhere
\]
\end{proof}

We now give an example where $\mathrm{nd}(Y)$ does not coincide with $\mathrm{cnd}(Y)$.

\begin{proposition}
\label{prop:example-cnd<nd}
    Let $Y$ be a general nodal Enriques surface. Then  $\mathrm{cnd}(Y)<10$. 
\end{proposition}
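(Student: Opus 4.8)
The plan is to play the known value $\mathrm{nd}(Y)=10$ for a general nodal Enriques surface (see \cite[\S\,4.2]{DM19} and \cite[Lemma~3.2.1]{Cos83}) against the scarcity of smooth rational curves on $Y$. Since $\mathrm{cnd}(Y)\le\mathrm{nd}(Y)$ holds in general, it suffices to rule out equality. I would therefore assume, for contradiction, that $\mathrm{cnd}(Y)=10$: by definition this yields a finite set $\mathcal{R}$ of smooth rational curves on $Y$ together with classes $f_1,\dots,f_{10}\in\mathsf{HF}(Y,\mathcal{R})$ satisfying $f_i\cdot f_j=1-\delta_{ij}$, and the goal is to show that the genericity of $Y$ makes such a configuration impossible.

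The first step is a rigidity observation extracted purely from the intersection pattern. The Gram matrix of $f_1,\dots,f_{10}$ is the $10\times 10$ matrix with zero diagonal and all off-diagonal entries equal to $1$; its determinant is $9\cdot(-1)^9=-9\ne 0$, so the $f_i$ are linearly independent and span $S_Y\otimes\mathbb{Q}$. Because each $f_i$ is, by Definition~\ref{def:relative-cnd-invariant}, a non-negative rational combination of the classes of the curves in $\mathcal{R}$, the curves of $\mathcal{R}$ must themselves span $S_Y\otimes\mathbb{Q}$. Moreover each $f_i$ is necessarily the class attached to a \emph{reducible} configuration: an irreducible half-fiber is either smooth of genus one or a nodal or cuspidal rational curve, and none of these is a union of smooth rational curves, so every $f_i$ must come from an elliptic configuration of Kodaira type $\tilde{A}_n$, $\tilde{D}_n$, or $\tilde{E}_n$ assembled from the curves of $\mathcal{R}$. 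In the language of the Introduction, $\mathcal{R}$ would have to exhibit an entire non-degenerate (Fano) $10$-sequence through reducible fibers alone.

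The second and decisive step is to contradict this using the geometry of a \emph{general} nodal surface. Here I would invoke the classification of $(-2)$-curves and of genus-one fibrations on general nodal Enriques surfaces \cite{CD89}: its effect is that the effective $(-2)$-curves are controlled by one distinguished configuration together with its orbit under $\mathrm{Aut}(Y)$, so that the reducible fibers actually available are too few, and too simple, to furnish ten half-fiber classes in non-degenerate position that span $S_Y\otimes\mathbb{Q}$. I expect the main obstacle to lie precisely in making this last implication rigorous: one must characterize which isotropic classes are represented by reducible, rational-curve-supported half-fibers on a general (as opposed to special) nodal surface, and then show that this set admits non-degenerate sequences only up to length nine. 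The genericity hypothesis should enter exactly by obstructing the tenth member of any such sequence, yielding $\mathrm{cnd}(Y)\le 9<10=\mathrm{nd}(Y)$ and hence the strict inequality.
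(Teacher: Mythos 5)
Your setup is sound: assuming $\mathrm{cnd}(Y)=10$ for contradiction, noting that the ten classes $f_1,\dots,f_{10}$ must by definition come from reducible Kodaira configurations supported on the finite set $\mathcal{R}$ of smooth rational curves, and aiming to contradict genericity, is exactly the right frame. (Two small points: you do not need $\mathrm{nd}(Y)=10$ at all, since $\mathrm{cnd}(Y)\leq 10$ holds for lattice-rank reasons, so it suffices to exclude equality; and the linear independence of the $f_i$, while true, plays no role in the actual obstruction.)

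However, your ``second and decisive step'' is a genuine gap, and you acknowledge as much: the appeal to \cite{CD89} that the available reducible fibers are ``too few, and too simple'' is not an argument, and the intuition behind it is misleading --- a general nodal Enriques surface has infinitely many smooth rational curves, and the paper itself observes that isotropic sequences of length at least $7$ supported on them do exist (see \S\,\ref{sec:cndY1}), so no crude scarcity count can work. The paper's proof runs instead as follows. By \cite[Theorem~6.5.5~(ii)]{DK24}, on a general nodal surface every pencil $|2F_i|$ in such a sequence must have a reducible fiber of type $\widetilde{A}_1$, say $2f_i=R_i+S_i$ with $R_i,S_i$ smooth rational. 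Next, any two of these fibers share a component: otherwise $f_i\cdot f_j=1$ forces the four products $R_i\cdot R_j$, $R_i\cdot S_j$, $S_i\cdot R_j$, $S_i\cdot S_j$ to sum to $4$, and since no two $(-2)$-curves on $Y$ are disjoint \cite[Corollary~6.5.2]{DK24} each would equal $1$, contradicting \cite[Lemma~6.5.1]{DK24}. A short combinatorial argument then upgrades pairwise sharing to a single curve $R_0$ common to all ten fibers, so $2f_i=R_0+R_i$ for all $i$. Finally, the intersection numbers are pinned down ($R_0\cdot R_i=2$ and $R_i\cdot R_j=2$ for $i\neq j$), and the resulting Gram matrix of $R_0,R_1,\dots,R_{10}$ has rank $11$, which is impossible inside $S_Y$ of rank $10$. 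So the obstruction is lattice-theoretic, forced by the rigid $\widetilde{A}_1$-shape of all fibers in the sequence, not by a shortage of curves; this is the step your proposal would need to supply.
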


\begin{proof}
Suppose $(f_1,\ldots,f_{10})$ is an isotropic sequence realizing $\mathrm{cnd}(Y)=10$. By the definition of $\mathrm{cnd}(Y)$ and \cite[Theorem~6.5.5~(ii)]{DK24}, it is necessary that every pencil $|2F_i|$ has a reducible fiber of type $\widetilde{A}_1$. So, let us write $2f_i= R_i+S_i$ with $R_i$ and $S_i$ smooth rational curves.

The first step is to show that for all $i,j\in\{1,\ldots,10\}$, $i\neq j$, the fibers $R_i+S_i$ and $R_j+S_j$ have a component in common. By contradiction, assume otherwise. Then, we have that
\[
f_i\cdot f_j=1\implies 4=(R_i+S_i)\cdot(R_j+S_j)= R_i\cdot R_j+R_i\cdot S_j+S_i\cdot R_j+S_i\cdot S_j.
\]
Since no two $(-2)$-curves on $Y$ are disjoint by \cite[Corollary~6.5.2]{DK24}, then the intersection products $R_i\cdot R_j,R_i\cdot S_j,S_i\cdot R_j,S_i\cdot S_j$ equal $1$, which contradicts \cite[Lemma~6.5.1]{DK24}.

In the second step of the proof, we show that there exists a smooth rational curve $R_0$ which is an irreducible component of $R_i+S_i$ for all $i$. Let us start by considering $R_1+S_1$ and $R_2+S_2$. As they have a common irreducible component by the first step, we can assume up to relabeling that $R_0\coloneqq S_1=S_2$. Let us consider $R_i+S_i$, $i\in\{3,\ldots,10\}$, and assume by contradiction that $R_0\neq R_i,S_i$. Then, by the first step, we must have that $R_i+S_i=R_1+R_2$. Up to relabeling, we may assume that $R_i=R_1$ and $S_i=R_2$. Now, consider $R_j+S_j$, $j\in\{3,\ldots,10\}\setminus\{i\}$. As $R_j+S_j$ has a component in common with $R_i+S_i=R_1+R_2$, then we can assume up to relabeling that $R_j=R_1$. At the same time, $R_j+S_j$ has a component in common with $R_2+R_0$, which is impossible because $R_j+S_j$ would equal $R_1+R_2$ or $R_1+R_0$, contradicting the fact that the $10$ fibers of type $\widetilde{A}_1$ that we fixed are distinct. So, $R_0$ is a component of $R_i+S_i$. Summarizing, we can write for all $i$ that
\[
R_i+S_i = R_i + R_0.
\]

Finally, we study the intersection matrix $\mathfrak{R}=(R_i\cdot R_j)_{0\leq i,j\leq10}$. For $i=1,\ldots,10$, we have that $R_0\cdot R_i=2$ as $R_0+R_i$ is an elliptic configuration of type $\widetilde{A}_1$. Lastly, for $i,j\neq0$, $i\neq j$, we have that
\[
4=(R_0+R_i)\cdot(R_0+R_j)=2+R_i\cdot R_j\implies R_i\cdot R_j=2.
\]
It can be checked directly that the matrix $\mathfrak{R}$ has rank $11$, which implies that $R_0,\ldots,R_{10}$ generate a sublattice of $S_Y$ of rank $11$, which cannot be.
\end{proof}

\section{Smooth rational curves and automorphisms of Enriques surfaces}

From the discussion so far, it emerged that the more we know about smooth rational curves on an Enriques surface $Y$, the more we understand $\mathrm{nd}(Y)$. In \cite{BS22} Brandhorst and Shimada studied the distribution of smooth rational curves on $Y$ in relation to the automorphism group $\mathrm{Aut}(Y)$. Below we recall some aspects of their work.

\subsection{Root lattices and the \texorpdfstring{$(\tau,\overline{\tau})$}{Lg}-generic Enriques surfaces}
\label{sec:root-lattices-and-tau-taubar-invariant}

We follow the exposition in \cite[\S\,1.1]{BS22}. An \emph{ADE-lattice} is an even, negative definite lattice $R$ generated by \emph{roots}, i.e. vectors $v\in R$ such that $v^2=-2$. It is well known that an ADE-lattice $R$ has a basis consisting of roots whose associated dual graph is the disjoint union of some of the Dynkin diagrams $A_n$ ($n\geq1$), $D_n$ ($n\geq4$), and $E_6,E_7,E_8$. This \emph{ADE-type} for the lattice $R$ is denoted by $\tau(R)$.

In \cite{Shi21} Shimada classified the ADE-sublattices of $E_{10}$ up to the action of $O^{\mathcal{P}}(E_{10})$, which is the group of isometries of $E_{10}$ which preserve a positive half-cone $\mathcal{P}$, that is one of the two connected components of $\{v\in E_{10}\mid v\cdot v>0\}$.

\begin{theorem}[{\cite{Shi21}}]
The following hold:
\begin{enumerate}

\item Let $R_1,R_2\subseteq E_{10}$ be two ADE-lattices. Denote by $\overline{R}_1,\overline{R}_2$ their respective primitive closures in $E_{10}$. Then also $\overline{R}_1,\overline{R}_2$ are ADE-lattices and $(\tau(R_1),\tau(\overline{R}_1))=(\tau(R_2),\tau(\overline{R}_2))$ if and only if $R_1$ and $R_2$ are in the same $O^{\mathcal{P}}(E_{10})$-orbit.

\item Let $R\subseteq E_{10}$ be an ADE-sublattice. Then there are $184$ possibilities for the pairs $(\tau(R),\tau(\overline{R}))$. These are listed in \cite[Table~1]{Shi21} (see also \cite[Table~1]{BS22}).

\end{enumerate}
\end{theorem}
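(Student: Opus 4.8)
The plan is to treat this as a lattice-theoretic classification and to reduce both statements to Nikulin's theory of discriminant forms and primitive embeddings into even unimodular lattices, exploiting that $E_{10}$ is even, unimodular, of signature $(1,9)$. The easy direction of the equivalence in (1) is immediate: any $g\in O^{\mathcal{P}}(E_{10})$ carries an ADE-sublattice $R$ to an isometric one and commutes with taking primitive closures, so $g(R)$ and $R$ share the pair $(\tau,\overline{\tau})$. The substance is therefore threefold: (a) to show $\overline{R}$ is again an ADE-lattice; (b) to prove the converse of (1), that $(\tau(R),\tau(\overline{R}))$ determines the $O^{\mathcal{P}}(E_{10})$-orbit; and (c) to carry out the enumeration yielding $184$.

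For (a), first note $\overline{R}=(R\otimes\mathbb{Q})\cap E_{10}$ is negative definite (it shares the rational span of $R$) and even (it is a sublattice of an even lattice), and the roots of $R$ already span $\overline{R}\otimes\mathbb{Q}$, so the roots of $\overline{R}$ form a root system of full rank. What is not automatic is root-generation: even overlattices of root lattices need not be root lattices in general (e.g. Niemeier lattices in rank $24$). The point is that here $\mathrm{rk}\,\overline{R}\le 9$, since $E_{10}$ has only a $9$-dimensional negative definite subspace. I would establish root-generation by a finite verification: even overlattices of a root lattice $Q$ correspond to isotropic subgroups of the discriminant form of $Q$, and a direct inspection of the admissible glue groups in ranks $\le 9$ shows that every glue vector that can be adjoined is itself a root, so $\overline{R}$ coincides with the sublattice generated by its roots and is thus ADE.

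For (b), I would argue in three steps. First, $\tau(\overline{R})$ determines $\overline{R}$ up to isometry, since an ADE-lattice is determined by its ADE-type. Second, a primitive embedding of the negative definite lattice $\overline{R}$ into the even unimodular lattice $E_{10}$ is unique up to $O(E_{10})$: this is precisely the setting of Nikulin's uniqueness criterion for primitive embeddings into unimodular lattices, whose hypotheses (bounding $\mathrm{rk}\,\overline{R}$ and the length of its discriminant group against the signature $(1,9)$) are met in this rank range, the orthogonal complement of signature $(1,9-\mathrm{rk}\,\overline{R})$ having a single genus class, with the boundary case $\mathrm{rk}\,\overline{R}=9$ checked directly. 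Third, inside a fixed copy of $\overline{R}\subset E_{10}$ one must verify that the ADE-sublattices $R$ of type $\tau(R)$ whose primitive closure is all of $\overline{R}$ form a single orbit under $O(\overline{R})$, whose isometries extend to $E_{10}$. Composing with $-\mathrm{id}$ if needed to preserve the chosen half-cone $\mathcal{P}$ then places $R_1$ and $R_2$ in the same $O^{\mathcal{P}}(E_{10})$-orbit.

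For (2), the count is a finite enumeration: ADE-sublattices have rank $\le 9$, so there are finitely many candidate types $\tau$; for each I would list the ADE-types $\overline{\tau}$ of the possible primitive closures $\overline{R}\supseteq R$ and retain only those pairs for which $\overline{R}$ admits a primitive embedding into $E_{10}$, a condition governed by Nikulin's existence criterion entirely through the discriminant form of $\overline{R}$ and essentially unobstructed in this rank range. Tallying the surviving pairs gives the $184$ entries of \cite[Table~1]{Shi21}. The main obstacle, in both (b) and (2), is the sublattice-conjugacy bookkeeping of the third step: a sub-root-system of a given type inside $\overline{R}$ need not be unique up to $O(\overline{R})$ in general, and it is exactly to separate the resulting classes that one records $(\tau,\overline{\tau})$ rather than $\tau$ alone; checking exhaustively that this pair suffices, and that no admissible pair is missed, is the part I expect to require systematic, and — as in Shimada's treatment — computer-assisted case analysis.
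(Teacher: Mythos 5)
This theorem is not proved in the paper at all: it is quoted, with attribution, from Shimada's work \cite{Shi21}, where it is established by an explicit, largely computer-assisted lattice classification. So there is no internal proof to compare your proposal against; judged on its own terms, your outline follows the right general architecture (classify primitive closures via embeddings into the unimodular lattice $E_{10}$, then classify full-rank root sublattices inside them), but it contains a genuine gap at the step you treat as a routine finite check.

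The gap is in your part (a). You claim that root-generation of $\overline{R}$ follows from a rank-bounded verification that ``every glue vector that can be adjoined is itself a root'' for root lattices of rank $\leq 9$. This is false as a statement about abstract overlattices. Take $Q=A_1^{\oplus 8}$ with pairwise orthogonal roots $e_1,\ldots,e_8$, $e_i^2=-2$, and consider the class $c=\tfrac{1}{2}(e_1+\cdots+e_8)$. Its discriminant-form value is $c^2=-4\equiv 0$ in $\mathbb{Q}/2\mathbb{Z}$, so $c$ is isotropic and may be adjoined; but every representative of its coset has the form $\tfrac{1}{2}\sum_i m_ie_i$ with all $m_i$ odd, hence norm $-\tfrac{1}{2}\sum_i m_i^2\leq -4$. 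The even overlattice $L=Q+\mathbb{Z}c$ therefore has only the roots $\pm e_i$ and is not generated by roots, although $\mathrm{rk}\,L=8\leq 9$. (A weight-$8$ glue vector in $A_1^{\oplus 9}$ gives a rank-$9$ example.) The true reason such overlattices cannot occur as primitive closures is the ambient $E_{10}$, which your argument never invokes at this step: the discriminant group of a primitive sublattice of a unimodular lattice of rank $10$ is isomorphic to that of its orthogonal complement, hence has length at most $10-\mathrm{rk}\,\overline{R}$, whereas the $L$ above has discriminant group $(\mathbb{Z}/2)^6$ of length $6>2$. So part (a) cannot be repaired as a statement about glue groups in bounded rank; one must enumerate closures subject to the complement constraint, which is in effect what Shimada does.

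A second, smaller problem is your appeal to Nikulin's uniqueness criterion in part (b): its hypotheses require $\mathrm{rk}\,E_{10}-\mathrm{rk}\,\overline{R}\geq \ell(A_{\overline{R}})+2$ (and $\mathrm{rk}\,\overline{R}\leq 8$), which fails for many entries of the table --- already for $\overline{R}=D_8$ (length $2$, complement of rank $2$), and for every $\overline{R}$ of rank $9$, where the complement is a rank-one positive lattice. In exactly this range neither uniqueness of the primitive embedding nor the extendability of isometries of $\overline{R}$ to $E_{10}$ (which your third step assumes) is automatic; both must be settled by Miranda--Morrison-type or case-by-case computation. You do flag the need for computer-assisted case analysis, but you locate it only in the sub-root-system bookkeeping, whereas it is equally needed for the embedding statements you treat as consequences of general theory.
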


\begin{remark}
\label{rmk:realizable-ADE-sublattices}
Given an ADE-sublattice $R\subseteq E_{10}$, it is natural to ask whether there exists an Enriques surface $Y$ together with a configuration of smooth rational curves $C_1,\ldots,C_n\subseteq Y$ whose dual graph equals $\tau(R)$. By \cite[Corollary~1.8]{Shi21}, we have that $\tau(R)$ is realized in this way on a complex Enriques surface if and only if the fourth column of \cite[Table~1]{Shi21} does not have the symbol ``$-$'', which occurs in $175$ cases out of the $184$ possibilities for $(\tau(R),\tau(\overline{R}))$. As our goal is to study these Enriques surfaces, we work over $\mathbb{C}$.
\end{remark}

We will be interested in understanding the geometry of nodal Enriques surfaces $Y$ whose universal K3 cover satisfies specific conditions with respect to a configuration of smooth rational curves on $Y$ generating a sublattice of $S_Y$ of fixed ADE-type. Recall that for a lattice $(L,b)$ and a positive integer $m$, $L(m)$ denotes the lattice $(L,mb)$.

\begin{definition}
Let $Y$ be an Enriques surface with universal K3 cover $X\rightarrow Y$. Let $(\tau,\overline{\tau})$ one of the pairs in \cite[Table~1]{BS22}. Then $Y$ is called \emph{$(\tau,\overline{\tau})$-generic} provided the following hold:
\begin{enumerate}

\item Consider $H^{2,0}(X)\subseteq T_X\otimes\mathbb{C}$, where $T_X$ denotes the transcendental lattice of the K3 surface $X$. Then the group of isometries of $T_X$ preserving $H^{2,0}(X)$ is equal to $\{\pm\mathrm{id}_{T_X}\}$.

\item Let $R\subseteq E_{10}$ be an ADE-sublattice such that $(\tau(R),\tau(\overline{R}))=(\tau,\overline{\tau})$. Define $M_R$ to be the sublattice of $E_{10}(2)\oplus R(2)$ given by
\[
\langle(v,0),(w,\pm w)/2\mid v\in E_{10},~w\in R\rangle.
\]
Then there exist isometries $M_R\cong S_X$ and $E_{10}\cong S_Y$ such that the following diagram commutes:

\begin{center}
\begin{tikzpicture}
\matrix(a)[matrix of math nodes,
row sep=2em, column sep=2em,
text height=1.5ex, text depth=0.25ex]
{E_{10}(2)&M_R\\
S_Y(2)&S_X.\\};
\path[right hook->] (a-1-1) edge node[]{}(a-1-2);
\path[->] (a-1-1) edge node[left]{$\cong$}(a-2-1);
\path[right hook->] (a-2-1) edge node[]{}(a-2-2);
\path[->] (a-1-2) edge node[right]{$\cong$}(a-2-2);
\end{tikzpicture}
\end{center}

\end{enumerate}

\end{definition}

\begin{definition}
\label{def:realizable-Enriques}
Among the $175$ cases in Remark~\ref{rmk:realizable-ADE-sublattices} of $(\tau,\overline{\tau})$ which can be realized geometrically by smooth rational curves on an Enriques surface, $155$ of them are $(\tau,\overline{\tau})$-generic. These are the cases not marked with ``$\times$'' in the fifth column of \cite[Table~1]{BS22}. We will focus on these Enriques surfaces, which we will refer to as \emph{realizable} $(\tau,\overline{\tau})$-generic Enriques surfaces.
\end{definition}

\subsection{Automorphisms and smooth rational curves on \texorpdfstring{$(\tau,\overline{\tau})$}{Lg}-generic Enriques surfaces}

Let $Y$ be a $(\tau,\overline{\tau})$-generic Enriques surface. Consider the natural homomorphism
\[
\mathrm{Aut}(Y)\rightarrow O^{\mathcal{P}}(S_Y)
\]
and denote by $\mathrm{aut}(Y)$ its image. One of the main results in \cite{BS22} is the computation of a finite generating set for $\mathrm{aut}(Y)$ and the study of its action on $\mathcal{R}(Y)$, which denotes the set of smooth rational curves on $Y$. The construction is quite technical \cite[\S\,6.1]{BS22}, but for our purposes it suffices to focus on two of its main ingredients: the sets $\mathcal{R}_{\mathrm{temp}}$ and $\mathcal{H}$. The former set $\mathcal{R}_{\mathrm{temp}}$ is a specific subset of $\mathcal{R}(Y)$ with the property that the composition
\[
\mathcal{R}_{\mathrm{temp}}\hookrightarrow\mathcal{R}(Y)\twoheadrightarrow\mathcal{R}(Y)/\mathrm{aut}(Y)
\]
is surjective, while $\mathcal{H}$ is a subset of $\mathrm{aut}(Y)$ that acts in a specific way on a chamber decompositions of the nef cone of $Y$. We focus on $\mathcal{R}_{\mathrm{temp}}$ because it contains explicit examples of smooth rational curves on $Y$ that we can use to study $\mathrm{nd}(Y)$, and we can apply the automorphisms in $\mathcal{H}$ to $\mathcal{R}_{\mathrm{temp}}$ to obtain more smooth rational curves if needed. Moreover, the elements in $\mathcal{R}_{\mathrm{temp}}$ and $\mathcal{H}$ are conveniently described in \cite{SB20data} in terms of a fixed $E_{10}$-basis of $S_Y$. So, a curve in $\mathcal{R}_{\mathrm{temp}}$ is an integral vector of dimension $10$ and an automorphism in $\mathcal{H}$ is a $10\times10$ matrix with entries in $\mathbb{Z}$. In the next example we explain how to extract this information from \cite{SB20data} (see also \cite{SB20explanation}).

\begin{example}
\label{ex:how-to-extract-the-data}
Consider the $(\tau,\overline{\tau})$-generic Enriques surface number $145$. To find the data corresponding to $\mathcal{R}_{\mathrm{temp}}$ and $\mathcal{H}$ in the file \cite[Enrs.txt]{SB20data}, we can first search for \texttt{no := 145}. In the corresponding record named \texttt{Rats}, which is before the label $145$, we can find \texttt{Ratstemp}, and then different records named \texttt{rat}. For each one of these, we consider \texttt{ratY}, which give the following vectors:
\begin{align*}
R_0:=(4, 2, 4, 6, 5, 4, 3, 2, 1, 0),~R_1:=(2, 2, 3, 4, 3, 2, 1, 0, 0, 0),\\
R_2:=(0, 0, 0, 0, 0, 0, 0, 0, 0, 1),~R_3:=(0, 0, 0, 0, 0, 0, 0, 0, 1, 0),\\
R_4:=(0, 0, 0, 0, 0, 0, 0, 1, 0, 0),~R_5:=(0, 0, 0, 0, 0, 0, 1, 0, 0, 0),\\
R_6:=(0, 0, 0, 0, 0, 1, 0, 0, 0, 0),~R_7:=(0, 0, 0, 0, 1, 0, 0, 0, 0, 0),\\
R_8:=(0, 0, 1, 0, 0, 0, 0, 0, 0, 0),~R_9:=(0, 0, 0, 1, 0, 0, 0, 0, 0, 0).
\end{align*}
This means that, for a fixed $E_{10}$-basis $\{B_1,\ldots,B_{10}\}$ of $S_Y$, the vectors
\begin{gather*}
4B_1+2B_2+4B_3+6B_4+5B_5+4B_6+3B_7+2B_8+B_9,\\
2B_1+2B_2+3B_3+4B_4+3B_5+2B_6+B_7,~B_{10},~B_9,~B_8,~B_7,~B_6,~B_5,~B_3,~B_4.
\end{gather*}
are numerically equivalent to smooth rational curve on $Y$, giving the curves in $\mathcal{R}_{\mathrm{temp}}$. The record \texttt{HHH} encodes the automorphisms in $\mathcal{H}$, whose action on $S_Y$ is described by the different \texttt{gY}. For the Enriques surface $145$ there are two such \texttt{gY}, and these have to be thought of as $10\times10$ matrices in the $E_{10}$-basis $\{B_1,\ldots,B_{10}\}$.
\end{example}

\section{The non-degeneracy invariant for \texorpdfstring{$(\tau,\overline{\tau})$}{Lg}-generic Enriques surfaces}

\subsection{The main result}

Using \cite{MRS22Paper,MRS22Code} we compute a lower bound, and in the majority of the cases the exact value, for the non-degeneracy invariant of the realizable $(\tau,\overline{\tau})$-generic Enriques surfaces in \cite[Table~1]{BS22}.

\begin{remark}
\label{rmk:nd-for1-172-184}
Let $Y_i$ be the $i$-th realizable $(\tau,\overline{\tau})$-generic Enriques surface in \cite[Table~1]{BS22}. By \cite[Section~7.4]{BS22}, the Enriques surfaces $Y_{172}$ and $Y_{184}$ have finite automorphism group. These Enriques surfaces were classified in \cite{Kon86}, and more precisely we have that $Y_{172}$ and $Y_{184}$ are respectively of type I and II. For these Enriques surfaces, the non-degeneracy invariants were computed in \cite[Propositions~8.9.6 and 8.9.9]{DK24} (see also \cite[Table~3]{MRS22Paper}). The Enriques surface $Y_1$ is a general nodal Enriques surfaces by combining \cite[\S\,6.5]{BS22} and \cite[Theorem~6.5.5~(ii)]{DK24}, and thus we have $\mathrm{nd}(Y_1)=10$. This leaves us with $152$ cases of $\mathrm{nd}(Y_i)$ to compute.
\end{remark}

\begin{theorem}
\label{thm:nds-of-tautaubar-generic-Enriques-surfaces}
Let $Y_i$ be the $i$-th realizable $(\tau,\overline{\tau})$-generic Enriques surface in \cite[Table~1]{BS22} with $i\neq1,172,184$. Then,
\begin{enumerate}

\item $\mathrm{nd}(Y_{145})=4$.

\item We have that

\begin{table}[H]
\begin{tabular}{cccc}
$\mathrm{nd}(Y_{84})\geq9$, & $\mathrm{nd}(Y_{85})\geq7$, & $\mathrm{nd}(Y_{121})\geq9$, & $\mathrm{nd}(Y_{122})\geq7$,\\
$\mathrm{nd}(Y_{123})\geq7$, & $\mathrm{nd}(Y_{143})\geq8$, & $\mathrm{nd}(Y_{144})\geq8$, & $\mathrm{nd}(Y_{158})\geq9$,\\
$\mathrm{nd}(Y_{159})\geq7$, & $\mathrm{nd}(Y_{171})\geq8$, & $\mathrm{nd}(Y_{176})\geq7$. &
\end{tabular}
\end{table}
Moreover, in the above $11$ cases, $\mathrm{nd}(Y_i)=\mathrm{cnd}(Y_i)$.

\item Finally, in the remaining $140$ cases we have that $\mathrm{nd}(Y_i)=10$.

\end{enumerate}
\end{theorem}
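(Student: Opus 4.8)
The plan is to deduce every assertion from the sandwiching inequality $\mathrm{cnd}(Y,\mathcal{R})\le\mathrm{cnd}(Y)\le\mathrm{nd}(Y)\le 10$ together with explicit computations of $\mathrm{cnd}(Y_i,\mathcal{R})$ carried out with the code of \cite{MRS22Code}. For each index $i$ I would begin by extracting from the Brandhorst--Shimada data \cite{SB20data} the set $\mathcal{R}_{\mathrm{temp}}$ of smooth rational curves and the generating automorphisms $\mathcal{H}$, each expressed as integral vectors and $10\times 10$ integral matrices in a fixed $E_{10}$-basis of $S_{Y_i}$, exactly as in Example~\ref{ex:how-to-extract-the-data}. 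Starting from $\mathcal{R}_{\mathrm{temp}}$ and enlarging it, when necessary, by applying words in $\mathcal{H}$ (which carry smooth rational curves to smooth rational curves), I obtain a finite collection $\mathcal{R}\subseteq\mathcal{R}(Y_i)$ on which to run the computation.

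For part~(3) I would feed $\mathcal{R}$ to the code: it enumerates all elliptic configurations supported on $\mathcal{R}$, selects those that are fibers or half-fibers to assemble $\mathsf{HF}(Y_i,\mathcal{R})$, and recursively searches for a maximal sequence $f_1,\dots,f_m$ with $f_i\cdot f_j=1-\delta_{ij}$. Whenever the search returns $m=10$, the chain of inequalities forces $\mathrm{nd}(Y_i)=10$, because $10$ is the maximal possible value; here no further structural input is needed. For each of the $140$ surfaces I would record one explicit length-$10$ sequence (as collected in §\ref{App:TableNonDeg}) so that the equality is certified independently of the search algorithm.

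For part~(2) the same procedure on a suitable $\mathcal{R}$ returns sequences of the stated lengths, yielding the lower bounds $\mathrm{nd}(Y_i)\ge\mathrm{cnd}(Y_i,\mathcal{R})$. To upgrade this to $\mathrm{nd}(Y_i)=\mathrm{cnd}(Y_i)$ I would invoke Proposition~\ref{prop:CndEqualNd}: it suffices to verify that every elliptic fibration on $Y_i$ admits a fiber or half-fiber supported on the union of smooth rational curves. Since Brandhorst and Shimada classify the elliptic fibrations up to $\mathrm{aut}(Y_i)$ and automorphisms preserve $\mathcal{R}(Y_i)$, this reduces to checking the condition on a finite list of orbit representatives, after which the equality propagates to all fibrations.

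The genuinely hard part is part~(1), the exact value $\mathrm{nd}(Y_{145})=4$. The finite-$\mathcal{R}$ computation only supplies the lower bound $\mathrm{nd}(Y_{145})\ge 4$ (which also follows from \cite{MMV22b}), so the crux is the upper bound. The obstruction is that $Y_{145}$ has infinite automorphism group—$\mathrm{aut}(Y_{145})$ acts on cohomology as the infinite dihedral group—whence $\mathcal{R}(Y_{145})$ is infinite and no single finite $\mathcal{R}$ can a priori certify $\mathrm{cnd}(Y_{145})$. The plan for §\ref{sec:145} is to give a complete combinatorial description of this infinite-dihedral action on $\mathcal{R}(Y_{145})$, thereby gaining global control of all elliptic configurations and of every class in $\mathsf{HF}$; then, passing from $\mathrm{nd}$ to $\mathrm{cnd}$ via Proposition~\ref{prop:CndEqualNd}, I would show combinatorially that no isotropic sequence of half-fibers exceeds length $4$. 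Organizing the orbit structure so that this finiteness-type bound can be established \emph{despite} the infinitude of $\mathcal{R}(Y_{145})$ is the main obstacle, and it is precisely this step that resists automatic replication for the remaining $11$ surfaces of part~(2).
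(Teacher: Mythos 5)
Your proposal is correct and follows essentially the same route as the paper: explicit code-generated isotropic sequences from $\mathcal{R}_{\mathrm{temp}}$ (enlarged by $\mathcal{H}$-translates where needed) certify parts (2) and (3), Proposition~\ref{prop:CndEqualNd} together with the classification of elliptic fibrations in \cite{SB20ellipticfibrations} upgrades the bounds in part (2) to $\mathrm{nd}(Y_i)=\mathrm{cnd}(Y_i)$, and part (1) is handled exactly as in \S\,\ref{sec:145} by combinatorially controlling the infinite-dihedral action on $\mathcal{R}(Y_{145})$, bounding isotropic sequences of classes in $\mathsf{HF}$ by $4$, and concluding via Proposition~\ref{prop:CndEqualNd}. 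The only difference is one of detail, not of method: your part (1) is a plan rather than a proof, whereas the paper executes it through Lemmas~\ref{lem:145ActionOnCurves}--\ref{lem:145MaximumNumber} and Proposition~\ref{prop:cnd145}.
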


\begin{proof}
The value $\mathrm{nd}(Y_{145})=4$ is computed in \S\,\ref{sec:145} (in particular, see Theorem~\ref{thm:nd-for-Enriques-145}). In the remaining cases where $\mathrm{nd}(Y_i)\geq m$, we provide an explicit non-degenerate isotropic sequence of length $m$ (if $m=10$, then we obtain the $141$ cases in (3)). To find such sequences we use the code \cite{MRS22Code} with input the smooth rational curves in $\mathcal{R}_{\mathrm{temp}}$ provided by \cite{SB20data}, and in some case $\mathcal{R}_{\mathrm{temp}}$ together with some curves in $\mathcal{R}_{\mathrm{temp}}\cdot\mathrm{aut}(Y_i)$ (see \S\,\ref{sec:produce-output} for more detail). The output, which can be tested directly without using again \cite{MRS22Code} (see \S\,\ref{sec:test-output} for more detail), is discussed in Section~\ref{App:TableNonDeg}. Finally, the claim in (2) that $\mathrm{nd}(Y_i)=\mathrm{cnd}(Y_i)$ for the eleven examples mentioned follows by Proposition~\ref{prop:CndEqualNd} combined with the fact that every elliptic fibration on $Y_i$ admits a fiber or a half-fiber supported on the union of smooth rational curves, as it can be argued by \cite{SB20ellipticfibrations}.
\end{proof}

\begin{remark}
It follows immediately from Theorem~\ref{thm:nds-of-tautaubar-generic-Enriques-surfaces} that there are no $(\tau,\overline{\tau})$-generic Enriques surfaces with non-degeneracy invariant equal to $5$ or $6$. To our knowledge, examples of Enriques surfaces with non-degeneracy invariant $5,6$, or $9$ are not known.
\end{remark}

\subsection{Computer-based construction of the isotropic sequences}
\label{sec:produce-output}

Fix a $(\tau,\overline{\tau})$-generic Enriques surface $Y_i$. To compute a lower bound for $\mathrm{nd}(Y_i)$, the program \cite{MRS22Code} needs as input a basis for $S_{Y_i}$ and a finite collection $\mathcal{R}$ of smooth rational curves on $Y_i$ (for the explanation of the algorithm see \cite[\S\,4]{MRS22Paper}). Following \cite{BS22}, we take an $E_{10}$-basis for $S_{Y_i}$ and $\mathcal{R}=\mathcal{R}_{\mathrm{temp}}$ if $i\neq43,78,84,121,158$ (recall the smooth rational curves in $\mathcal{R}_{\mathrm{temp}}$ are already expressed in the $E_{10}$ basis in \cite{SB20data}). For $i=43,78,84,121,158$, we obtain the claimed lower bound on $\mathrm{nd}(Y_i)$ by taking $\mathcal{R}$ equal to $\mathcal{R}_{\mathrm{temp}}$ union a finite subset of $\mathcal{R}_{\mathrm{temp}}\cdot\mathrm{aut}(Y_i)$. More precisely, we consider the action of the automorphisms in $\mathcal{H}$ (see Example~\ref{ex:how-to-extract-the-data}).

\begin{remark}
We collect some subtleties behind the above calculations.
\begin{enumerate}

\item In the cases where $\mathrm{nd}(Y_i)=10$, it is sometimes enough to consider a proper subset of curves in $\mathcal{R}_{\mathrm{temp}}$ to construct a non-degenerate isotropic sequence of length $10$.

\item In Theorem~\ref{thm:nds-of-tautaubar-generic-Enriques-surfaces}, for the cases where $\mathrm{nd}(Y_i)\geq m$ with $m\neq10$ and $i\neq 43,78,84$, we used all the curves in $\mathcal{R}_{\mathrm{temp}}$. We also used some of the automorphisms in $\mathcal{H}$ to produce new smooth rational curves not in $\mathcal{R}_{\mathrm{temp}}$, but this did not improve the lower bound $m$.

\item The code \cite{MRS22Code} needs that the chosen smooth rational curves in $\mathcal{R}$ generate $S_{Y_i}$ over $\mathbb{Q}$. We have that $\mathcal{R}_{\mathrm{temp}}$ has this property, except for $i=84$.

\item For $i=43,78,84, 121, 158$, by only using $\mathcal{R}=\mathcal{R}_{\mathrm{temp}}$ we obtain the lower bounds $\mathrm{nd}(Y_{43})\geq6,\mathrm{nd}(Y_{78})\geq7,\mathrm{nd}(Y_{84})\geq4, \mathrm{nd}(Y_{121})\geq6, \mathrm{nd}(Y_{158})\geq8$, instead of the better $10, 10, 9, 9, 9$, respectively.

\end{enumerate}
\end{remark}

\subsection{How to test the correctness of the output}
\label{sec:test-output}

One way of proving parts (2) and (3) of Theorem~\ref{thm:nds-of-tautaubar-generic-Enriques-surfaces} is by directly checking the output discussed in Section~\ref{App:TableNonDeg}. More precisely, let $\mathrm{nd}(Y_k)\geq m$ be one of the inequalities claimed in (2) or (3) of Theorem~\ref{thm:nds-of-tautaubar-generic-Enriques-surfaces} and let $F_1,\ldots,F_m$ be the corresponding sequence of half-fibers we want to check. Using the data \cite{SB20data}, we can find the coordinates of each smooth rational curve in the support of $F_1,\ldots,F_m$ with respect to the fixed $E_{10}$ basis, and consequently express each half-fiber $F_1,\ldots,F_m$ in this basis. (Sometimes, smooth rational curves are obtained by acting on some $R_i\in\mathcal{R}_{\mathrm{temp}}$ with automorphisms in $\mathcal{H}=\{H_0,H_1,\ldots\}$). We point out that the obtained vectors have integral entries, which guarantees that $F_1,\ldots,F_m\in S_{Y_k}$, and that the entries are not simultaneously divisible by $2$, which guarantees that there are no fibers among $F_1,\ldots,F_m$. By organizing the $10$-dimensional vectors as rows of a matrix $F$, we can check the following equality by simple matrix multiplication:
\[
FM_{E_{10}}F^\intercal=\mathbf{1}_m-\mathbf{I}_{m}.
\]
Here, $\mathbf{1}_m$ is the $m\times m$ matrix with entries equal to $1$ and $\mathbf{I}_m$ is the $m\times m$ identity matrix. This means that $F_i\cdot F_j=1-\delta_{ij}$ for all $i,j$. The last thing to verify is that $F_1,\ldots,F_m$ are actually half-fibers. For this, let $C_i$ be the elliptic configuration associated to $F_i$. That is, if $F_i$ has a coefficient $1/2$, then $C_i=2F_i$. Otherwise, $C_i=F_i$. Then we have to check that the dual graph of the curves $R_i$ in the support of $C_i$ form an extended Dynkin diagram and that the coefficients of the smooth rational curves $R_i$ in $C_i$ match the multiplicities of the irreducible components of the singular fibers in Kodaira's classification.

\begin{example}
For $Y_k=Y_{158}$, the claimed sequence of half-fibers is the following:
\begin{gather*}
F_1=\frac{1}{2}(R_{0}+R_{2}),~F_2=\frac{1}{2}(R_{2}+R_{16}),~F_3=\frac{1}{2}(R_{2}+R_{2}\cdot H_2),~F_4=\frac{1}{2}(R_{3}+R_{4}),\\
F_5=\frac{1}{2}(R_{3}+R_{12}),~F_6=\frac{1}{2}(R_{2}+R_{14}),~F_7=(R_{2}+R_{8}),\\
F_8=\frac{1}{2}(R_{1}+R_{6}+R_{8}+R_{15}+2R_{7}),~F_9=\frac{1}{2}(R_{5}+R_{6}+R_{8}+R_{9}+2R_{11}).
\end{gather*}

Using the data in \cite{SB20data} we can write $R_0,R_1,R_2,R_3\ldots,R_9$, $R_{11},R_{12},R_{14},R_{15},R_{16},R_2\cdot H_2$ in the $E_{10}$-basis as

\begin{align}
\label{eq:smooth-rat-curves-158}
\begin{split}
R_{0}&=(2, 1, 2, 3, 2, 1, 0, 0, 0, 0),~R_{1}=(2, 1, 2, 3, 2, 2, 2, 1, 0, 0),\\
R_{2}&=(8, 5, 10, 15, 14, 13, 10, 6, 4, 2),~R_{3}=(8, 5, 10, 15, 14, 13, 10, 7, 4, 1),\\
R_{4}&=(4, 1, 4, 7, 6, 5, 4, 3, 2, 1),~R_{5}=(0, 1, 0, 0, 0, 0, 0, 0, 0, 0),\\
R_{6}&=(0, 0, 0, 0, 0, 0, 0, 0, 0, 1),~R_{7}=(0, 0, 0, 0, 0, 0, 0, 0, 1, 0),\\
R_{8}&=(0, 0, 0, 0, 0, 0, 0, 1, 0, 0),~R_{9}=(2, 1, 2, 4, 4, 4, 4, 3, 2, 1),\\
R_{11}&=(4, 2, 5, 8, 7, 6, 4, 2, 1, 0),~R_{12}=(4, 3, 6, 9, 8, 7, 4, 3, 2, 1),\\
R_{14}&=(4, 3, 6, 9, 8, 7, 6, 4, 2, 0),~R_{15}=(4, 3, 6, 9, 8, 8, 6, 4, 2, 1),\\
R_{16}&=(6, 3, 8, 13, 12, 11, 8, 6, 4, 2),~R_{2}\cdot H_2=(12,7,14,23,20,19,14,10,6,2).
\end{split}
\end{align}
So, the coordinates of $F_1,\ldots,F_9$ in the same basis are
\begin{align*}
F_1&=(5,3,6,9,8,7,5,3,2,1),~F_2=(7,4,9,14,13,12,9,6,4,2),\\
F_3&=(10,6,12,19,17,16,12,8,5,2),~F_4=(6,3,7,11,10,9,7,5,3,1),\\
F_5&=(6,4,8,12,11,10,7,5,3,1),~F_6=(6,4,8,12,11,10,8,5,3,1),\\
F_7&=(8,5,10,15,14,13,10,7,4,2),~F_8=(3,2,4,6,5,5,4,3,2,1),\\
F_9&=(5,3,6,10,9,8,6,4,2,1).
\end{align*}
The matrix equality $FM_{E_{10}}F^\intercal=\mathbf{1}_9-\mathbf{I}_9$ holds true. Below we write the intersection matrix of the smooth rational curves in \eqref{eq:smooth-rat-curves-158}:
\setcounter{MaxMatrixCols}{16}
{\scriptsize
\[
\begin{pmatrix}
-2&0&2&2&0&0&0&0&0&2&0&0&2&2&2&2\\
0&-2&0&0&0&0&0&1&0&0&0&2&0&0&2&0\\
2&0&-2&0&2&0&0&0&2&0&0&2&2&0&2&2\\
2&0&0&-2&2&0&2&0&0&0&0&2&0&0&2&0\\
0&0&2&2&-2&2&0&0&0&0&0&2&2&2&0&2\\
0&0&0&0&2&-2&0&0&0&0&1&0&0&0&2&0\\
0&0&0&2&0&0&-2&1&0&0&1&0&2&0&0&2\\
0&1&0&0&0&0&1&-2&1&0&0&0&0&1&0&0\\
0&0&2&0&0&0&0&1&-2&0&1&0&0&0&0&0\\
2&0&0&0&0&0&0&0&0&-2&1&2&0&0&0&0\\
0&0&0&0&0&1&1&0&1&1&-2&0&0&0&0&0\\
0&2&2&2&2&0&0&0&0&2&0&-2&2&0&0&2\\
2&0&2&0&2&0&2&0&0&0&0&2&-2&0&2&2\\
2&0&0&0&2&0&0&1&0&0&0&0&0&-2&0&0\\
2&2&2&2&0&2&0&0&0&0&0&0&2&0&-2&2\\
2&0&2&0&2&0&2&0&0&0&0&2&2&0&2&-2
\end{pmatrix}
\]
}
From this matrix we can extract that the extended Dynkin diagrams corresponding to the elliptic configuration associated to $F_1,\ldots,F_9$, which are respectively:
\[
6\times\widetilde{A}_{1}^{\mathrm{F}},~\widetilde{A}_{1}^{\mathrm{HF}},~2\times\widetilde{D}_{4}^{\mathrm{F}}.
\]
The coefficients of the smooth rational curves in \eqref{eq:smooth-rat-curves-158} indeed match the multiplicities from Kodaira's classification. This completes the check for $i=158$, confirming that $\mathrm{nd}(Y_{158})\geq9$.
\end{example}

To assist with the above checking for the $152$ cases in Theorem~\ref{thm:nds-of-tautaubar-generic-Enriques-surfaces}~(2) and (3), we provide appropriate scripts in \cite{MRS23Code}.

\subsection{A conjecture about \texorpdfstring{$\mathrm{nd}(Y_{158})$}{Lg}}

By Theorem~\ref{thm:nds-of-tautaubar-generic-Enriques-surfaces}, we have that
\[
\mathrm{nd}(Y_{84}),\mathrm{nd}(Y_{121}),\mathrm{nd}(Y_{158})\geq9.
\]
As there are no known examples of Enriques surfaces with non-degeneracy invariant $9$, it is worthwhile to explore whether at least one of these three lower bounds is attained. As $\mathrm{aut}(Y_{158})$ has the smallest number of generators, $Y_{158}$ is a good candidate to study more in detail. We run the code \cite{MRS22Code} with large (finite) samples of smooth rational curves $\mathcal{R}$ in $\mathcal{R}(Y_{158})=\mathcal{R}_{\mathrm{temp}}\cdot\mathrm{aut}(Y_{158})$ always obtaining that $\mathrm{cnd}(Y_{158},\mathcal{R})=9$. This computational evidence motivates the following conjecture.

\begin{conjecture}
The realizable $(\tau,\overline{\tau})$-generic Enriques surfaces $158$ satisfies $\mathrm{nd}(Y_{158})=9$.
\end{conjecture}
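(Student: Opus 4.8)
The plan is to prove $\mathrm{nd}(Y_{158})=9$ by establishing the upper bound $\mathrm{nd}(Y_{158})\le 9$, since the matching lower bound is already provided by Theorem~\ref{thm:nds-of-tautaubar-generic-Enriques-surfaces}. By that same theorem we know $\mathrm{nd}(Y_{158})=\mathrm{cnd}(Y_{158})$, so it suffices to show that there is no non-degenerate isotropic sequence $(f_1,\dots,f_{10})$ with every $f_i\in\mathsf{HF}(Y_{158},\mathcal R)$ for some finite $\mathcal R\subseteq\mathcal R(Y_{158})$. Equivalently, writing $\mathsf{HF}(Y_{158})$ for the set of all half-fiber classes supported on smooth rational curves, I must rule out ten classes $f_1,\dots,f_{10}\in\mathsf{HF}(Y_{158})$ with $f_i\cdot f_j=1-\delta_{ij}$. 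The obstruction to a direct computation is that $\mathcal R(Y_{158})=\mathcal R_{\mathrm{temp}}\cdot\mathrm{aut}(Y_{158})$ is infinite, so the recursive search of \cite{MRS22Code} cannot be run on all of it at once; the entire difficulty is to replace this infinite search by a finite and exhaustive one.

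The first step is to exploit the $\mathrm{aut}(Y_{158})$-invariance of the problem. Both $\mathsf{HF}(Y_{158})$ and the intersection form are preserved by $\mathrm{aut}(Y_{158})$, so a length-$10$ sequence exists if and only if one exists up to the action of the group. Using the data of \cite{SB20data,SB20ellipticfibrations}, I would first enumerate the finitely many $\mathrm{aut}(Y_{158})$-orbits of half-fiber classes in $\mathsf{HF}(Y_{158})$: each such class is the half-fiber of an elliptic pencil possessing a reducible fiber or half-fiber that is a nodal cycle, and these pencils fall into finitely many orbits, whose reducible-fiber types ($\widetilde A_1^{\mathrm F}$, $\widetilde A_1^{\mathrm{HF}}$, $\widetilde D_4^{\mathrm F}$, and so on) are recorded by Brandhorst and Shimada. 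This produces a finite list of orbit representatives, each expressed explicitly in the fixed $E_{10}$-basis of $S_{Y_{158}}$.

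The second, and central, step is a bounded recursive search modulo stabilizers, in the spirit of the combinatorial analysis carried out for $Y_{145}$ in \S\,\ref{sec:145}. Normalizing by an automorphism, I may assume $f_1$ is one of the finitely many orbit representatives. The key finiteness input is that, once $f_1$ is fixed, the classes $g\in\mathsf{HF}(Y_{158})$ with $g\cdot f_1=1$ form finitely many orbits under $\mathrm{Stab}_{\mathrm{aut}(Y_{158})}(f_1)$: writing $f_1$ inside a hyperbolic summand $U=\langle f_1,e\rangle$ of $S_{Y_{158}}$, the condition $g\cdot f_1=1$ fixes the coefficient of $e$ in $g$, while $g^2=0$ and the requirement that $g$ be supported on rational curves cut the remaining $E_8$-component down to finitely many orbits under the stabilizer. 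Iterating, at the $k$-th level one intersects the constraints $g\cdot f_i=1$ for $i<k$ with the rational-curve-support condition; each level is finite modulo the progressively smaller stabilizers, so the search tree is finite. Proving that this tree admits no branch of length $10$ then yields $\mathrm{cnd}(Y_{158})\le 9$, hence the conjecture.

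The hard part will be controlling the infinite group $\mathrm{aut}(Y_{158})$ so that the finiteness claims of the second step become rigorous and genuinely exhaustive, rather than merely sampled. Concretely, one must compute the stabilizers $\mathrm{Stab}_{\mathrm{aut}(Y_{158})}(f_1)$ and their iterated analogues from the generators in $\mathcal H$, verify that the relevant quotients are finite, and certify that the resulting finite search captures every admissible sequence — not only the large finite samples of $\mathcal R$ that were tested computationally and always returned $\mathrm{cnd}(Y_{158},\mathcal R)=9$. This bookkeeping is precisely where the $Y_{145}$ argument, which relied on $\mathrm{aut}(Y_{145})$ acting as the infinite dihedral group with a transparent orbit structure on $\mathcal R(Y_{145})$, becomes substantially more involved: for $Y_{158}$ the stabilizer subgroups and their orbits on $\mathcal R(Y_{158})$ must be described explicitly before the search can be declared complete, and it is this missing explicit description that currently keeps the statement at the level of a conjecture.
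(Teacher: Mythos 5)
The statement you are addressing is stated in the paper as a \emph{conjecture}, and the paper does not prove it: the only support offered there is the lower bound $\mathrm{nd}(Y_{158})\geq 9$ from Theorem~\ref{thm:nds-of-tautaubar-generic-Enriques-surfaces} together with computational sampling (running \cite{MRS22Code} on large finite subsets $\mathcal{R}\subseteq\mathcal{R}(Y_{158})$, always obtaining $\mathrm{cnd}(Y_{158},\mathcal{R})=9$). Your proposal does not close this gap either: it is a research plan, not a proof. Your reduction steps are fine --- Theorem~\ref{thm:nds-of-tautaubar-generic-Enriques-surfaces}~(2) does give $\mathrm{nd}(Y_{158})=\mathrm{cnd}(Y_{158})$, so it would indeed suffice to rule out a length-$10$ isotropic sequence in $\mathsf{HF}(Y_{158},\mathcal{R}(Y_{158}))$, and the strategy you sketch (enumerate $\mathrm{aut}$-orbits of half-fiber classes, then do a bounded search modulo stabilizers) is essentially the extension of the $Y_{145}$ argument of \S\,\ref{sec:145} that the paper itself flags as the natural route and defers to future work. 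But the load-bearing claims in your second step are asserted, not proved: you claim that for fixed $f_1$ the classes $g\in\mathsf{HF}(Y_{158})$ with $g\cdot f_1=1$ fall into finitely many orbits under $\mathrm{Stab}_{\mathrm{aut}(Y_{158})}(f_1)$, and that this persists at every level of the recursion. The hyperbolic-summand argument you give does not establish this --- the conditions $g^2=0$, $g\cdot f_1=1$ leave an infinite set of lattice vectors, and finiteness of the quotient depends on the structure of the stabilizer inside the infinite group $\mathrm{aut}(Y_{158})$, which you never determine. In the $Y_{145}$ case the paper makes exactly this kind of argument rigorous only because $\mathrm{aut}(Y_{145})\cong\mathbb{Z}\rtimes(\mathbb{Z}/2\mathbb{Z})$ admits explicit matrix generators $\epsilon,\gamma$, allowing closed-form intersection numbers such as $D_a\cdot D_{b,k}=k^2+\frac{1}{2}(-1)^k+\frac{1}{2}$ to be computed for \emph{all} $k\in\mathbb{Z}$ at once; nothing analogous is produced here for $Y_{158}$.

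Concretely, to turn your outline into a proof you would need: (i) the list of $\mathrm{aut}(Y_{158})$-orbit representatives of elliptic fibrations from \cite{SB20ellipticfibrations}, written in the fixed $E_{10}$-basis; (ii) explicit generators of the relevant stabilizers, extracted from $\mathcal{H}$ and the relations in $\mathrm{aut}(Y_{158})$; (iii) closed-form (parametric in the group elements) intersection formulas between orbit representatives, playing the role of the formulas in Lemma~\ref{lem:145MaximumNumber} and Proposition~\ref{prop:cnd145}, from which one reads off which pairs can coexist in an isotropic sequence; and (iv) the combinatorial argument that no compatible collection of types reaches length $10$. Since none of (i)--(iv) is carried out, and you yourself identify this as ``the missing explicit description,'' the proposal establishes nothing beyond what the paper already states; the upper bound $\mathrm{nd}(Y_{158})\leq 9$ remains open, exactly as the paper's designation of the statement as a conjecture indicates.
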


\subsection{A question about \texorpdfstring{$\mathrm{cnd}(Y_{1})$}{Lg}}
\label{sec:cndY1}

For the $(\tau,\overline{\tau})$-generic Enriques surface $Y_1$, we found that $\mathrm{cnd}(Y_1)\geq\mathrm{cnd}(Y_1,\mathcal{R})=7$ for several choices of $\mathcal{R}$ among the finite sets of smooth rational curves containing $\mathcal{R}_{\mathrm{temp}}$. Additionally, by Proposition~\ref{prop:example-cnd<nd} we have that $\mathrm{cnd}(Y_1)<10$, so the possibilities are $\mathrm{cnd}(Y_1)\in\{7,8,9\}$. It would be interesting to know the value of $\mathrm{cnd}(Y_1)$.

\section{The non-degeneracy invariant of the Enriques surface \texorpdfstring{$145$}{Lg}}
\label{sec:145}

In this section we fix $Y:=Y_{145}$ to be the realizable $(\tau,\overline{\tau})$-generic Enriques surface $145$ in \cite[Table~1]{BS22}. In this case, $(\tau,\overline{\tau})=(E_8,E_8)$. Our goal is to prove that $\mathrm{nd}(Y)=4$ in Theorem~\ref{thm:nd-for-Enriques-145}. In what follows, we fix an $E_{10}$-basis of $S_Y$ compatible with the computational data in \cite{SB20data}.

\subsection{Automorphisms and smooth rational curves on \texorpdfstring{$Y$}{Lg}}
\label{sec:auts-and-rats-in-145}

Next, we describe $\mathrm{aut}(Y)$. A description of the full automorphism group $\mathrm{Aut}(Y)$ appears already in \cite[Theorem~4.12]{BP83}. 
At the same time, it is necessary for our purposes to describe explicit generators compatibly with \cite{BS22}.

Start by first considering the $10$ rational curves $R_0,\ldots,R_9$ in $\mathcal{R}_{\mathrm{temp}}$ on $Y$ which were introduced in Example~\ref{ex:how-to-extract-the-data}.  The dual graph of the union of $R_0,\ldots,R_9$ appears in Figure~\ref{fig:145TenRats}.

\begin{figure}[H]
\begin{tikzpicture}
\draw  (0,0)-- (1,0);
\draw  (4,0)-- (5,0);
\draw  (0,0)-- (1,1);
\draw  (1,1)-- (2.5,2);
\draw  (2.5,2)-- (4,1);
\draw  (4,1)-- (5,0);
\draw  (5,0)-- (4,-1);
\draw  (4,-1)-- (2.5,-2);
\draw  (2.5,-2)-- (1,-1);
\draw  (1,-1)-- (0,0);
\fill [color=black] (0,0) circle (2.4pt);
\draw[color=black] (-0.35,0) node {$R_9$};
\fill [color=black] (1,0) circle (2.4pt);
\draw[color=black] (1.02,0.36) node {$R_8$};
\fill [color=black] (4,0) circle (2.4pt);
\draw[color=black] (4,0.36) node {$R_1$};
\fill [color=black] (5,0) circle (2.4pt);
\draw[color=black] (5.35,0) node {$R_4$};
\fill [color=black] (2.5,2) circle (2.4pt);
\draw[color=black] (2.5,2.29) node {$R_6$};
\fill [color=black] (2.5,-2) circle (2.4pt);
\draw[color=black] (2.5,-2.3) node {$R_2$};
\fill [color=black] (1,1) circle (2.4pt);
\draw[color=black] (0.81,1.31) node {$R_7$};
\fill [color=black] (4,1) circle (2.4pt);
\draw[color=black] (4.26,1.29) node {$R_5$};
\fill [color=black] (4,-1) circle (2.4pt);
\draw[color=black] (4.3,-1.2) node {$R_3$};
\fill [color=black] (1,-1) circle (2.4pt);
\draw[color=black] (0.79,-1.2) node {$R_0$};
\end{tikzpicture}
\caption{Intersection graph of the smooth rational curves $R_0,\ldots,R_9$ on the Enriques surface $Y_{145}$.}
\label{fig:145TenRats}
\end{figure}
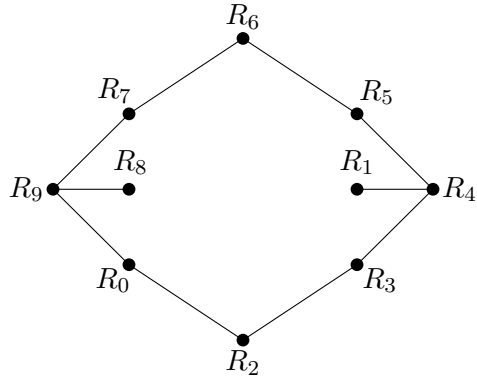

Recall that a pair $(2F_1,2F_2)$ of genus one pencils such that $F_1 \cdot F_2=1$ is called a \textit{$U$-pair}. To a $U$-pair one can associate the \emph{bielliptic map}, generically of degree $2$, which is induced by the linear series $|2F_1 + 2F_2|$. Therefore, a $U$-pair gives rise to an involution on $Y$. We refer to \cite[\S\,3.3]{CDL24} for more properties of the linear series associated to the $U$-pairs.

\begin{definition}\label{def:EpsilonAndDelta}
Consider the following fibers of elliptic fibrations:
\begin{align*}
G_1 &= 2(R_0+R_2+R_3+R_4+R_5+R_6+R_7+R_9),\\
G_2 &= R_0 + R_1 + R_3 +R_8 + 2(R_4+R_5+R_6+R_7+R_9),\\
G_3 &= 4R_9 + 3R_0+3R_7 + 2R_2+2R_6+2R_8 + R_3+R_5.
\end{align*}
Let $\bepsilon$ be the involution associated to the $U$-pair $(G_1,G_2)$. Another involution $\bdelta$ is associated to the $U$-pair $(G_1,G_3)$. Observe that $G_3 \cdot R_1=0$, so $R_1$ is a component of a singular fiber of $|G_3|$. By the classification in \cite{SB20ellipticfibrations}, such a fiber forms a $\widetilde{A}_1$ configuration: we denote by $R_1'$ its other component. Finally, define $\bgamma \coloneqq \bdelta \circ \bepsilon$.
\end{definition}

\begin{lemma}
\label{lem:145ActionOnCurves} The automorphisms $\bepsilon,\bdelta,\bgamma \in \mathrm{Aut}(Y)$ constructed above satisfy the following properties: 
\begin{enumerate}[(a)]

\item The involution $\bepsilon$ fixes $R_2$ and $R_6$ and it acts as a transposition on the pairs of curves
\[
(R_0,R_3),~(R_1,R_8),~(R_4,R_9),~(R_5,R_7).
\]

\item The involution $\bdelta$ fixes $R_4$, $R_8$, $R_9$, and it acts as a transposition on the pairs
\[
(R_0,R_7),~(R_2,R_6),~(R_3,R_5),~(R_1,R_1').
\]

\item The automorphism $\bgamma$ has infinite order and it acts as a transposition on the pairs
\[
(R_0,R_5),~(R_2,R_6),~(R_3,R_7),~(R_4,R_9).
\]
Moreover, let us define $R_{8,k}:=\bgamma^k (R_8)$. Then the following hold:
\begin{enumerate}[(c.1)]

\item $R_{8,-1}=R_1$;

\item $R_{8,m}\neq R_{8,n}$ for all $m,n\in\mathbb{Z}$, $m\neq n$;

\item $R_{8,k}\neq R_0,\ldots,R_9$ for all $k\in\mathbb{Z}\setminus\{-1,0\}$.

\end{enumerate}

\end{enumerate}
\end{lemma}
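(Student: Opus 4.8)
The plan is to compute everything explicitly in the fixed $E_{10}$-basis, using the fact that $\bepsilon$ and $\bdelta$ are involutions coming from $U$-pairs with known associated bielliptic maps. First I would write down the matrices of $\bepsilon$ and $\bdelta$ acting on $S_Y$. The involution attached to a $U$-pair $(2F_1,2F_2)$ acts on $\mathrm{Num}(Y)$ in a way that is determined by the classes $F_1,F_2$ (it is the reflection/Cremona-type involution associated to the bielliptic polarization $|2F_1+2F_2|$, cf.\ \cite[\S\,3.3]{CDL24}). Concretely, $\tfrac{1}{2}G_1,\tfrac{1}{2}G_2,\tfrac{1}{2}G_3$ give the half-fiber classes, so I would express these in the $E_{10}$-basis from the data of Example~\ref{ex:how-to-extract-the-data}, use the intersection form $M_{E_{10}}$ to pin down the action on the orthogonal complement of the relevant $U$, and thereby obtain the explicit $10\times10$ integer matrices for $\bepsilon$ and $\bdelta$. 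Once these matrices are in hand, parts (a) and (b) reduce to applying them to the coordinate vectors of $R_0,\ldots,R_9$ (and, for (b), to the vector $R_1'$, which I would extract from the condition $G_3\cdot R_1=0$ and the $\widetilde{A}_1$-fiber structure) and checking the claimed transpositions and fixed curves by direct vector comparison.

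For part (c), I would set $\bgamma=\bdelta\circ\bepsilon$ and obtain its matrix as the product of the two matrices already computed. The transposition statement is then another finite matrix-times-vector check on $R_0,\ldots,R_9$. The genuinely infinite-order claims (c.1)--(c.3) are where more care is needed. For (c.1), I would verify the single identity $\bgamma^{-1}(R_8)=R_1$ by computing $\bgamma^{-1}=\bepsilon\circ\bdelta$ and applying it to $R_8$. For the order and distinctness claims, the key point is that $\bgamma$ acts on the rank-two hyperbolic plane $U$ spanned by (the classes proportional to) $G_1$ and a transverse half-fiber as a hyperbolic isometry, i.e.\ with an eigenvalue of absolute value $>1$; equivalently, the characteristic polynomial of $\bgamma$ has a root off the unit circle. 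This forces $\bgamma$ to have infinite order and, more importantly, makes the orbit $R_{8,k}=\bgamma^k(R_8)$ genuinely unbounded.

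For (c.2), the cleanest route is to track a single coordinate (or the intersection number $R_{8,k}\cdot D$ with a fixed nef class $D$): if $\bgamma$ is hyperbolic, then $R_{8,k}\cdot D$ grows strictly monotonically (geometrically) in $|k|$ after a transient, so the classes $R_{8,m}$ are pairwise distinct; I would make this precise by diagonalizing $\bgamma$ over $\mathbb{R}$ on the invariant $U$-plane and reading off the growth of the relevant coordinate. For (c.3), since $R_0,\ldots,R_9$ is a finite set, it suffices to combine the growth estimate from (c.2) with a check of the finitely many small exponents: the monotone growth shows $R_{8,k}$ eventually leaves any bounded set and hence cannot equal any of the ten fixed curves for $|k|$ large, and a direct computation of $\bgamma^k(R_8)$ for the finitely many remaining small $k\in\mathbb{Z}\setminus\{-1,0\}$ rules out coincidences with $R_0,\ldots,R_9$. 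The main obstacle I anticipate is obtaining the matrices of $\bepsilon$ and $\bdelta$ correctly from the $U$-pair data—getting the bielliptic involution's action on $S_Y$ right, including its effect on the part of the lattice orthogonal to the spanning $U$—since everything downstream (especially the hyperbolicity input for (c)) depends on these two matrices; once they are correctly identified, the remaining verifications are finite and mechanical.
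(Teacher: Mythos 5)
Your plan breaks down at its very first step: there is no lattice-theoretic recipe that produces the matrix of a bielliptic involution from the classes of its $U$-pair, so the matrices of $\bepsilon$ and $\bdelta$ cannot be ``pinned down'' from $G_1,G_2,G_3$ and the intersection form alone. The covering involution of $|2F_1+2F_2|$ does act as the identity on the hyperbolic plane spanned by the two half-fibers, but its action on the orthogonal complement $\cong E_8$ genuinely depends on the surface, not on the lattice data: on an unnodal Enriques surface the bielliptic involution acts as $-\mathrm{id}$ on $U^\perp$ (invariant lattice of rank $2$), whereas here the invariant lattice of $\bepsilon$ has rank at least $6$ (it contains $R_2$, $R_6$, $R_0+R_3$, $R_1+R_8$, $R_4+R_9$, $R_5+R_7$), and similarly for $\bdelta$. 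Since surfaces with identical $U$-pair lattice data carry bielliptic involutions with non-isometric invariant lattices, no ``reflection/Cremona-type'' formula of the kind you invoke can exist; determining this action \emph{is} the content of parts (a) and (b), not an admissible input to them. Everything downstream in your proposal --- the matrix of $\bgamma$ as a product, the hyperbolic eigenvalue, the growth estimates --- depends on these two matrices, so the argument never gets off the ground.

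The paper instead determines the action indirectly: by \cite[Lemma~8.7.5]{DK24} the involution $\bepsilon$ preserves both fibrations $|G_1|$ and $|G_2|$, hence permutes the $A_7$-configuration of their common fiber components; by \cite[Lemma~8.2.5]{DK24} together with the count of common components, $\bepsilon$ is numerically non-trivial; and since acting trivially on the $A_7$-chain would force it to fix all ten independent classes $R_0,\ldots,R_9$, it must act as the non-trivial symmetry of the chain. For $\bdelta$ an extra, genuinely geometric step is unavoidable: the bisection $R_4\cong\mathbb{P}^1$ of $|G_3|$ already has two fixed points on the half-fibers, so $\bdelta$ fixes $R_4\cap R_3$ and $R_4\cap R_5$ only if $\bdelta|_{R_4}=\mathrm{id}$, which ties the action on the $A_7$-chain to the transposition $(R_1,R_1')$ --- this cannot be seen at the level of $S_Y$ at all. (The explicit matrices $\epsilon,\gamma$ displayed after the lemma are read off from the Brandhorst--Shimada data \cite{SB20data}, not derived from the $U$-pairs.) Your strategy for part (c) --- infinite order via an eigenvalue off the unit circle and quadratic growth of $R_{8,k}$ --- is sound once correct matrices are in hand, and is essentially the alternative argument the paper itself mentions via \eqref{eqn:parametriccurve145}; note only that growth in $|k|$ does not by itself give injectivity of $k\mapsto R_{8,k}$ (the entries grow like $k^2+k$, which is not injective on $\mathbb{Z}$), so your (c.2) really does require the finer spectral comparison you allude to, or the paper's group-theoretic argument that $\bgamma^{2k}$ would otherwise fix all of $R_0,\ldots,R_9$.
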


\begin{proof}
By \cite[Lemma~8.7.5]{DK24}, $\bepsilon$ preserves the elliptic fibrations $|G_1|$ and $|G_2|$. The automorphism $\bepsilon$ is numerically non-trivial: this follows from \cite[Lemma~8.2.5]{DK24} and the fact that the singular fibers in $|G_1|$ and $|G_2|$ have only $7$ common components. The common components $R_3, R_4, R_5, R_6, R_7, R_9, R_0$ form an $A_7$-configuration. Then, we claim that $\bepsilon$  must act on the $A_7$-configuration as the non-trivial symmetry of the $A_7$ diagram. Indeed, otherwise it would fix the classes of all curves $R_i$, $i=0,\ldots,9$. Since these classes are independent in $S_Y$, $\bepsilon$ would act trivially on a basis of $S_{Y}\otimes\mathbb{Q}$, and hence numerically trivially. This is a contradiction. The claimed action of $\bepsilon$ on $R_1,R_2,R_8$ follows since $\bepsilon$ preserves the intersection products. This shows part~(a).

Consider now $\bdelta$. Observe first that the $\widetilde{E}_7$-fiber $G_3$ must be preserved, and hence that $R_8$ must be fixed. Likewise, $\bdelta$ maps $R_4$ to itself. 
Next, we claim that $\bdelta(R_3)=R_3$ and $\bdelta(R_5)=R_5$ if and only if $\bdelta(R_1)=R_1$ and $\bdelta(R_1')=R_1'$. 
In fact, the curve $R_4$ is a bisection of $|G_3|$ meeting $G_3$ in $R_3$ and $R_5$, and meeting the singular fiber $R_1+R_1'$ once in each component. 
The restriction $\bdelta|_{R_4}$ has at least two fixed points given by the intersection with the two half-fibers of $|G_3|$. Since an automorphism of $R_4\cong\mathbb{P}^1$ with three fixed points is the identity, and since $\bdelta$ preserves the fibration $|G_3|$, the map $\bdelta|_{R_4}$ fixes $R_4 \cap R_3$ and $R_4 \cap R_5$ only if it is the identity, which establishes the claim. 
Arguing as for $\bepsilon$, the automorphism $\bdelta$ acts numerically non-trivially. Then, we claim that $\bdelta$ must act on the $A_7$-configuration of common curves among $G_1$ and $G_3$ as a reflection. Suppose otherwise for the sake of contradiction. In particular, the classes $R_3$ and $R_5$ are preserved by $\bdelta$. As explained above, this implies that $\bdelta$ also preserves $R_1$, and hence that it is numerically trivial, arguing as for $\bepsilon$. This is a contradiction, hence $\bdelta$ acts as a reflection on the $A_7$-configuration and transposes $R_1$ and $R_1'$.

For part~(c), the action of $\bgamma$ on $R_i$ for $i\neq8$ follows from parts~(a) and (b) above. We now prove that $\bgamma$ has infinite order. Denote by $\bgamma_0$ the isometry of $S_Y$ which is induced by $\bgamma$. First, observe that $\bgamma_0^2$ is the identity on the $\widetilde{A}_7$-configuration. Hence, by the proof of \cite[Lemma~4.10]{BP83}, we have that $\bgamma_0^2$ is an element of a subgroup of $O(S_Y)$ isomorphic to $\mathbb{Z}\rtimes(\mathbb{Z}/2\mathbb{Z})$. In particular, as $\bgamma_0^2$ is not the identity ($\bgamma_0^2(R_1)=R_1'$), we have that $\bgamma_0^2$ has order $2$ or infinite. So, if we show that $\bgamma_0^4$ is a nontrivial isometry, then we can conclude that $\bgamma_0^2$, and hence $\bgamma,\bgamma_0$, has infinite order. Consider the $\widetilde{E}_7$ elliptic configurations given by
\begin{align*}
M&=4R_4 + 3R_3+3R_5 + 2R_2+2R_6+2R_1 + R_0+R_7,\\
M'&=4R_4 + 3R_3+3R_5 + 2R_2+2R_6+2R_1' + R_0+R_7.
\end{align*}
By \cite{SB20ellipticfibrations}, the elliptic fibrations $|M|,|M'|$ have a $\widetilde{A}_1$ fiber each, which both contain the curve $R_8$. Explicitly, we can write these two $\widetilde{A}_1$ fibers as $R_8+R$ and $R_8+S$ for some smooth rational curves $R$ and $S$. As $\bgamma^2(M)=M'$, we have that $\bgamma^2(R_8+R)=R_8+S$. So, $\bgamma^2(R_8)=R_8$ or $S$. We show that the former is impossible by contradiction. If $\bgamma^2(R_8)=R_8$, then $\bgamma(R_1')=R_8$. By applying $\bdelta$ to both sides, we obtain $\bepsilon(R_1')=\bdelta(R_8)=R_8$, hence $\bepsilon(R_8)=R_1'$, which cannot be as $\bepsilon(R_8)=R_1$ and $R_1\neq R_1'$. We argued that $\bgamma^2(R_8)=S$, which forces $\bgamma^2(R)=R_8$, whence $\bgamma^4(R)=\bgamma^2(R_8)=S$. As it also holds $\bgamma_0^4(R)=S$, this proves that $\bgamma_0^4$ is not the trivial isometry.

We now can prove part~(c.2). If false, then let $k$ be a nonzero integer such that $\bgamma^k(R_8)=R_8$. Therefore, also $\bgamma^{2k}(R_8)=R_8$. Additionally, $\bgamma^{2k}$ fixes the $\widetilde{A}_7$-configuration. We have that
\begin{align*}
\bgamma^{2k}(R_8)=R_8&\implies\bgamma^{-2k}(R_8)=R_8\implies\bgamma^{-2k}(\bepsilon(R_1))=R_8\\
&\implies\bepsilon(\bgamma^{2k}(R_1))=R_8\implies\bgamma^{2k}(R_1)=\bepsilon(R_8)=R_1.
\end{align*}
As $\bgamma^{2k}$ fixes each curve $R_0,\ldots,R_9$, we have that $\bgamma_0^{2k}$ is the trivial isometry, contradicting the fact proved above that $\bgamma_0$ has infinite order. Finally, let us show (c.3). By the above argument, we already know that $R_{8,k}\neq R_1,R_8$. If $R_{8,k}=R_j$ for $j\neq 1,8$, then $R_{8,k+2}=R_j$ (recall $\bgamma^2$ fixes the $\widetilde{A}_7$-configuration), which contradicts (c.2).
\end{proof}

The remaining part of this section relies on the computational data \cite{SB20data}. To help the reader reproduce the computational arguments, we introduce some notations and conventions, and reconcile our considerations so far with \cite{BS22}. In particular, recall that \cite{BS22} fixes a $E_{10}$-basis of $S_Y$. The coordinates of the curves $R_0,\ldots,R_9$ with respect to this basis are written as row vectors in Example~\ref{ex:how-to-extract-the-data}. By letting $\bepsilon$ and $\bgamma$ act on $S_Y$, the associated matrices in this $E_{10}$-basis are written below. To denote them, we use the corresponding Greek letters, but not in boldface.

{\scriptsize
\[
\epsilon=\begin{pmatrix}-3 & -2 & -4 & -6 & -5 & -4 & -3 & -2 & 0 & 0 \\
0 & -1 & 0 & 0 & 0 & 0 & 0 & 0 & 0 & 0 \\
2 & 2 & 3 & 4 & 3 & 2 & 1 & 0 & 0 & 0 \\
0 & 0 & 0 & 0 & 0 & 0 & 0 & 1 & 0 & 0 \\
0 & 0 & 0 & 0 & 0 & 0 & 1 & 0 & 0 & 0 \\
0 & 0 & 0 & 0 & 0 & 1 & 0 & 0 & 0 & 0 \\
0 & 0 & 0 & 0 & 1 & 0 & 0 & 0 & 0 & 0 \\
0 & 0 & 0 & 1 & 0 & 0 & 0 & 0 & 0 & 0 \\
4 & 2 & 4 & 6 & 5 & 4 & 3 & 2 & 1 & 0 \\
0 & 0 & 0 & 0 & 0 & 0 & 0 & 0 & 0 & 1\end{pmatrix},
\]
\[
\gamma=\begin{pmatrix}-9 & -4 & -10 & -16 & -13 & -10 & -7 & -6 & -4 & -2\\-8 & -3 & -8 & -14 & -12 & -10 & -8 & -6 & -4 & -2\\10 & 4 & 11 & 18 & 15 & 12 & 9 & 6 & 4 & 2\\0 & 0 & 0 & 0 & 0 & 0 & 0 & 1 & 0 & 0\\0 & 0 & 0 & 0 & 0 & 0 & 0 & 0 & 1 & 0\\0 & 0 & 0 & 0 & 0 & 0 & 0 & 0 & 0 & 1\\4 & 2 & 4 & 6 & 5 & 4 & 3 & 2 & 1 & 0\\0 & 0 & 0 & 1 & 0 & 0 & 0 & 0 & 0 & 0\\0 & 0 & 0 & 0 & 1 & 0 & 0 & 0 & 0 & 0\\0 & 0 & 0 & 0 & 0 & 1 & 0 & 0 & 0 & 0\end{pmatrix}.
\]
}

From now on, we see $\mathrm{aut}(Y)$ as a group of matrices.

\begin{lemma}
\label{lem:145Group}
The group $\mathrm{aut}(Y)$ is generated by $\epsilon$ and $\gamma$, which satisfy the relations
\[
\epsilon^2 = \mathbf{I}_{10},\qquad \epsilon\gamma = \gamma^{-1}\epsilon.
\]
In particular, $\mathrm{aut}(Y)\cong \mathbb{Z}\rtimes(\mathbb{Z}/2\mathbb{Z}) $.
\end{lemma}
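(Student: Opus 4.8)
The plan is to verify the two relations first, then use them to identify the group structure, and finally confirm that $\epsilon$ and $\gamma$ generate all of $\mathrm{aut}(Y)$. The relation $\epsilon^2 = \mathbf{I}_{10}$ is immediate: $\bepsilon$ is an involution by construction (Definition~\ref{def:EpsilonAndDelta}), so its induced isometry $\epsilon$ squares to the identity; alternatively, this can be checked directly by multiplying the explicit $10\times 10$ matrix $\epsilon$ by itself. For the braid-type relation $\epsilon\gamma = \gamma^{-1}\epsilon$, the cleanest route is again a direct matrix computation with the two displayed matrices, but conceptually it reflects that $\bgamma = \bdelta\circ\bepsilon$ with both $\bepsilon,\bdelta$ involutions: one has $\epsilon\gamma\epsilon = \epsilon\delta\epsilon\cdot\epsilon = \epsilon\delta$, and since $\delta = \gamma\epsilon^{-1} = \gamma\epsilon$, this reads $\epsilon\delta = \epsilon\gamma\epsilon$, so I would instead argue that $\bepsilon\bgamma\bepsilon = \bepsilon\bdelta\bepsilon\bepsilon = \bepsilon\bdelta = (\bdelta\bepsilon)^{-1}\cdot(\bepsilon\bdelta\bepsilon\bdelta)$; to avoid this circularity I would simply verify $\epsilon\gamma\epsilon = \gamma^{-1}$ by checking $\epsilon\gamma\epsilon\gamma = \mathbf{I}_{10}$ on the explicit matrices, equivalently that $\epsilon\delta$ is an involution, which follows because $\bdelta\bepsilon = \bgamma$ and $\bepsilon\bdelta$ is its conjugate by the involution $\bepsilon$.

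Once both relations hold, the abstract group generated by $\epsilon,\gamma$ subject to $\epsilon^2=1$ and $\epsilon\gamma\epsilon=\gamma^{-1}$ is a quotient of the infinite dihedral group $\mathbb{Z}\rtimes(\mathbb{Z}/2\mathbb{Z})$. To get an honest isomorphism rather than a mere quotient, I would invoke Lemma~\ref{lem:145ActionOnCurves}(c), which states that $\bgamma$ has infinite order; hence $\gamma$ generates an infinite cyclic group, and the relations then force the group $\langle\epsilon,\gamma\rangle$ to be exactly $\langle\gamma\rangle\rtimes\langle\epsilon\rangle\cong\mathbb{Z}\rtimes(\mathbb{Z}/2\mathbb{Z})$. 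The semidirect-product structure is precisely encoded by $\epsilon$ acting on the normal subgroup $\langle\gamma\rangle\cong\mathbb{Z}$ by inversion, which is what the relation $\epsilon\gamma=\gamma^{-1}\epsilon$ says.

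The remaining, and I expect \emph{main}, obstacle is showing that $\epsilon$ and $\gamma$ actually generate all of $\mathrm{aut}(Y)$, rather than just some subgroup. Here I would appeal to the external input: the full automorphism group of $Y_{145}$ is described in \cite[Theorem~4.12]{BP83}, where it is shown to act on cohomology as the infinite dihedral group, and the computational data of \cite{BS22,SB20data} provides the generating set $\mathcal{H}$ for $\mathrm{aut}(Y)$. The cleanest argument is to match our $\epsilon,\gamma$ against the generators in $\mathcal{H}$ (there are only two \texttt{gY} matrices for surface $145$, per Example~\ref{ex:how-to-extract-the-data}) and verify that they generate the same subgroup of $O^{\mathcal P}(S_Y)$; since \cite{BP83} already pins down the abstract isomorphism type as $\mathbb{Z}\rtimes(\mathbb{Z}/2\mathbb{Z})$, it suffices to check that $\langle\epsilon,\gamma\rangle$ surjects onto $\mathrm{aut}(Y)$, and the infinite order of $\gamma$ together with the presence of the involution $\epsilon$ forces equality by a cardinality/index argument. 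I would double-check that the conventions (half-cone $\mathcal{P}$, the fixed $E_{10}$-basis) agree between our construction and \cite{SB20data}, since any discrepancy there is the most likely source of a sign or inversion error in the matrix relations.
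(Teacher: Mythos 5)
Your handling of the relations and of the abstract group structure is correct and close to the paper's: $\epsilon^2=\mathbf{I}_{10}$ because $\bepsilon$ is a bielliptic involution, and the tidy version of your second computation is simply that $\gamma$ is a product of the two involutions $\delta$ and $\epsilon$, whence $\epsilon\gamma\epsilon=\epsilon(\delta\epsilon)\epsilon=\epsilon\delta=\gamma^{-1}$ (this is what the paper alludes to with ``can also be checked at the level of the automorphisms''). Your use of Lemma~\ref{lem:145ActionOnCurves}~(c) to upgrade ``quotient of $\mathbb{Z}\rtimes(\mathbb{Z}/2\mathbb{Z})$'' to an isomorphism is also right, since $\epsilon\in\langle\gamma\rangle$ would force $\gamma=\gamma^{-1}$.

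The genuine gap is in the generation step, in two ways. First, $\mathcal{H}$ is \emph{not} a generating set of $\mathrm{aut}(Y)$: in the framework of \cite[\S\,6.1]{BS22}, $\mathrm{aut}(Y)$ is generated by $\mathcal{H}$ \emph{together with} the stabilizer of the initial chamber, and for surface $145$ one has $\mathcal{H}=\{\gamma,\gamma^{-1}\}$ (the two \texttt{gY} in \texttt{HHH}), while $\epsilon$ sits elsewhere in the data, as the nontrivial \texttt{gY} in the record \texttt{autcham} inside \texttt{V0}. So your plan to ``match $\epsilon,\gamma$ against the generators in $\mathcal{H}$'' cannot succeed: $\epsilon$ is not there, and $\mathcal{H}$ alone generates only $\langle\gamma\rangle\cong\mathbb{Z}$. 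Second, your fallback ``cardinality/index argument'' is invalid because the infinite dihedral group is not co-Hopfian: $\langle\gamma^2,\epsilon\rangle$ is a \emph{proper} (index $2$) subgroup of $\langle\gamma,\epsilon\rangle$ that is itself isomorphic to $\mathbb{Z}\rtimes(\mathbb{Z}/2\mathbb{Z})$, so knowing that both $\langle\epsilon,\gamma\rangle$ and $\mathrm{aut}(Y)$ are abstractly infinite dihedral does not force them to be equal. (Moreover, invoking \cite[Theorem~4.12]{BP83} presupposes identifying $Y_{145}$ with the Barth--Peters surface, an identification the paper explicitly states it does not prove.) The missing ingredient is exactly the paper's argument: locate $\gamma,\gamma^{-1}$ as the two elements of $\mathcal{H}$ \emph{and} $\epsilon$ as the nontrivial element of the chamber stabilizer \texttt{autcham}, then quote \cite[\S\,6.1]{BS22}, which says that these two pieces of data together generate $\mathrm{aut}(Y)$.
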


\begin{proof}
The claimed relations can be checked directly (this can also be checked at level of the automorphisms $\bepsilon,\bgamma$). The matrix $\gamma$ matches an element of $\mathcal{H}$, extracted from \cite[Enrs.txt]{SB20data} as explained in Example~\ref{ex:how-to-extract-the-data}. The other matrix in the record is $\gamma^{-1}$, and these are the only two elements in $\mathcal{H}$. The matrix $\epsilon$ appears in the record \texttt{autcham} inside \texttt{V0}. In \texttt{autcham} there are two possible \texttt{gY}, and $\epsilon$ equals the record \texttt{gY} different from $\mathbf{I}_{10}$. So, by the discussion in \cite[\S\,6.1]{BS22}, we have that $\epsilon$ and $\gamma$ generate $\mathrm{aut}(Y)$. Finally, the isomorphism of groups $\mathbb{Z}\rtimes(\mathbb{Z}/2\mathbb{Z})\cong\mathrm{aut}(Y)$ is explicitly given by $(n,i)\mapsto\gamma^n\epsilon^i$.
\end{proof}

Recall the convention in \cite{BS22} that the automorphisms act on the smooth rational curves by matrix multiplication on the right. We follow this convention, for consistency with the work of Brandhorst--Shimada. So, for instance, the action of $\bgamma$ on $R_8$ is given by the matrix multiplication
\[
R_8\cdot\gamma=(10,4,11,18,15,12,9,6,4,2),
\]
meaning that the coordinate vector of $\bgamma(R_8)$ is $R_8 \cdot \gamma$, where in the latter we identify as usual the smooth rational curve $R_8$ with its numerical class in $S_Y$ and coordinate vector.  
Although both the intersection product among curves and the matrix action of $\mathrm{aut}(Y)$ are denoted by ``$\cdot$'', the difference between the two will always be clear from the context.

As an illustration, we explicitly compute the coordinate vectors of $R_8\cdot\gamma^k$ as the transpose of
{\small\begin{equation} \label{eqn:parametriccurve145}
\begin{pmatrix} 
4k^2 + 4k + 1-(-1)^k\\
2k^2 + k + \frac{1}{2}-\frac{1}{2}(-1)^k\\
4k^2 + 4k + 2-(-1)^k\\
7k^2 + 7k + 2 - 2(-1)^k\\
6k^2 + 6k + \frac{3}{2}- \frac{3}{2}(-1)^k\\
5k^2 + 5k + 1- (-1)^k\\
4k^2  + 4k + \frac{1}{2}- \frac{1}{2}(-1)^k\\
3(k^2 + k)\\
2(k^2 + k)\\
k^2 + k
\end{pmatrix}.
\end{equation}}
This can be verified by using induction twice, one for the positive and one for the negative integers. Alternatively, the above claim can be checked directly with \texttt{SageMath}, as it can compute abstract $k$-th powers of matrices. We point out that \eqref{eqn:parametriccurve145} can be used to produce an alternative argument for Lemma~5.2~(c).

\begin{corollary}
The only smooth rational curves on $Y$ are $R_0,R_2,R_3,R_4,R_5,R_6,R_7,R_9$ and the infinitely many $R_{8,k}$ for $k\in\mathbb{Z}$, where in particular $R_8=R_{8,0}$ and $R_1=R_{8,-1}$.
\end{corollary}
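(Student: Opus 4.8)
The plan is to combine the explicit description of $\mathrm{aut}(Y)$ from Lemma~\ref{lem:145Group} with the orbit analysis in Lemma~\ref{lem:145ActionOnCurves}. By Lemma~\ref{lem:145Group} we have $\mathrm{aut}(Y)=\langle\epsilon,\gamma\rangle\cong\mathbb{Z}\rtimes(\mathbb{Z}/2\mathbb{Z})$, and by the discussion preceding the corollary the set $\mathcal{R}_{\mathrm{temp}}=\{R_0,\ldots,R_9\}$ surjects onto $\mathcal{R}(Y)/\mathrm{aut}(Y)$. Hence every smooth rational curve on $Y$ lies in the $\mathrm{aut}(Y)$-orbit of one of the ten curves $R_0,\ldots,R_9$, and the task reduces to computing these ten orbits explicitly and taking their union.

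First I would reduce the number of orbits to consider. Lemma~\ref{lem:145ActionOnCurves}(a)--(c) shows that $\epsilon$, $\bdelta$, and $\bgamma$ permute the curves $R_0,\ldots,R_9$ among themselves (together with $R_1'$), so the eight curves $R_0,R_2,R_3,R_4,R_5,R_6,R_7,R_9$ are identified with one another or fixed under these generators and never leave the finite set; thus each of them already belongs to the list. The one genuinely infinite orbit comes from $R_8$: since $\mathrm{aut}(Y)$ is generated by $\epsilon$ and $\gamma$ and $R_8\cdot\epsilon=R_1=R_{8,-1}$ sits inside the $\bgamma$-orbit of $R_8$ by Lemma~\ref{lem:145ActionOnCurves}(c.1), the full orbit of $R_8$ is exactly $\{R_{8,k}\mid k\in\mathbb{Z}\}$. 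I would verify this closure by checking that applying $\epsilon$ to any $R_{8,k}$ lands back in the same family, using the relation $\epsilon\gamma=\gamma^{-1}\epsilon$ from Lemma~\ref{lem:145Group}: indeed $R_{8,k}\cdot\epsilon=R_8\cdot\gamma^k\epsilon=R_8\cdot\epsilon\gamma^{-k}=R_1\cdot\gamma^{-k}=R_{8,-k-1}$, so the set $\{R_{8,k}\}$ is stable under both generators and hence is a single $\mathrm{aut}(Y)$-orbit.

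The final step is to assemble the union and confirm there is no overlap or omission. Parts (c.2) and (c.3) of Lemma~\ref{lem:145ActionOnCurves} guarantee that the $R_{8,k}$ are pairwise distinct and that none of them except $R_{8,0}=R_8$ and $R_{8,-1}=R_1$ coincides with any $R_0,\ldots,R_9$; in particular $R_1$ and $R_8$ are absorbed into the infinite family, which is why they are dropped from the finite list in the statement. Since the eight curves $R_0,R_2,\ldots,R_7,R_9$ together with the orbit of $R_8$ exhaust the images of all ten elements of $\mathcal{R}_{\mathrm{temp}}$ under $\mathrm{aut}(Y)$, and $\mathcal{R}_{\mathrm{temp}}$ meets every orbit, the enumeration is complete. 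The main obstacle is not conceptual but bookkeeping: one must be careful with the right-action convention so that the identities $R_{8,k}\cdot\epsilon=R_{8,-k-1}$ are applied consistently, and one must invoke (c.2)--(c.3) precisely to rule out spurious coincidences; once the orbit of $R_8$ is shown to be closed and disjoint from the remaining eight classes, the corollary follows immediately.
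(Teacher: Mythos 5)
Your proposal is correct and follows essentially the same route as the paper: both use the surjection of $\mathcal{R}_{\mathrm{temp}}$ onto $\mathcal{R}(Y)/\mathrm{aut}(Y)$, the decomposition of elements of $\mathrm{aut}(Y)$ as $\gamma^n\epsilon^i$ from Lemma~\ref{lem:145Group}, the relation $\gamma^n\epsilon=\epsilon\gamma^{-n}$ to reduce everything to $\gamma$-orbits of $R_j$ and $R_j\cdot\epsilon$, and parts (a), (c.1)--(c.3) of Lemma~\ref{lem:145ActionOnCurves} to identify the orbits and rule out coincidences. Your explicit closure computation $R_{8,k}\cdot\epsilon=R_{8,-k-1}$ is just a spelled-out instance of the paper's identity $R_j\cdot(\gamma^n\epsilon)=(R_j\cdot\epsilon)\cdot\gamma^{-n}$, so there is no substantive difference.
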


\begin{proof}
By the discussion in \cite[\S\,6.2]{BS22}, we have that the set $\mathcal{R}(Y)$ of smooth rational curves on $Y$ can be obtained by acting with $\mathrm{aut}(Y)$ on $\mathcal{R}_{\mathrm{temp}}=\{R_0,\ldots,R_9\}$. At the same time, by Lemma~\ref{lem:145Group}, every element of $\mathrm{aut}(Y)$ can be written as $\bepsilon^i \circ \bgamma^n$ for $n\in\mathbb{Z}$, $i\in\{0,1\}$. So, we have that
\[
\mathcal{R}(Y)=\{R_j\cdot(\gamma^n\epsilon^i)\mid n\in\mathbb{Z},~i\in\{0,1\},~j\in\{0,\ldots,9\}\}.
\]
Then, combining the results in Lemma~\ref{lem:145ActionOnCurves} with the equality (given by the right action)
\[
R_j\cdot(\gamma^n\epsilon)=R_j\cdot(\epsilon\gamma^{-n})=(R_j\cdot\epsilon)\cdot\gamma^{-n},
\]
we obtain that
\[
\mathcal{R}(Y)=\mathcal{R}_{\mathrm{temp}}\sqcup\{R_{8,k}\mid k\in\mathbb{Z}\setminus\{0,-1\}\},
\]
which gives the description of $\mathcal{R}(Y)$ in the statement.
\end{proof}

\begin{remark}[The Barth--Peters example]

It can be proved that the $(E_8,E_8)$-generic Enriques surface $Y$ under analysis already appeared in \cite[\S\,4]{BP83}. We briefly recall the construction of Barth and Peters. Consider $Q\coloneqq \mathbb{P}^1 \times \mathbb{P}^1$ together with the involution given in an affine patch by $\mathbf{s}\colon (x,y)\mapsto (-x,-y)$. For certain $(4,4)$-curves $B \subseteq Q$ (see \cite[\S\,4.1,~Case~1]{BP83}), the double cover $\overline{X}\to Q$ branched along $B$ has two $A_7$ singularities. The minimal resolution $X$ of $\overline{X}$ is a K3 surface, and $\mathbf{s}$ lifts to a fixed point free involution $\bsigma$ on $X$. Then, the very general Enriques surface $Y=X/\langle \bsigma \rangle$ is $(E_8,E_8)$-generic. We omit the proof of this as it is not needed in the computations that follow.

Finally, we connect the geometry of $Y$ studied in \S\,\ref{sec:auts-and-rats-in-145} with the results and notations of Barth--Peters. Following \cite[Proposition~3.3.15]{CDL24}, the surface $W\coloneqq Q/\langle \mathbf{s} \rangle$ is a $4$-nodal quartic symmetroid del Pezzo surface in $\mathbb{P}^4$. The bielliptic map $Y\rightarrow W$ associated with the $U$-pair $(G_1,G_3)$ in Definition~\ref{def:EpsilonAndDelta} makes the following diagram commute:
\begin{equation*}
\begin{tikzcd}
    X \rar \dar & Y \dar \\
    Q \rar & W.
\end{tikzcd}
\end{equation*}
In particular, the covering involution of $Y\rightarrow W$, which we denoted by $\bdelta$, coincides with the involution induced on $Y$ by $\bsigma_3$ on $X$ (see \cite[\S\,4.2]{BP83}). While \cite{BP83} does not describe an explicit infinite order generator of $\mathrm{aut}(Y)$, there is a precise description of the subgroup $G$ of $\mathrm{O}(S_Y)$ which fixes $\{R_0,\ldots,R_9\}\setminus \{R_1,R_8\}$: it is an infinite dihedral group, which is generated by an involution $\alpha_0$ and an infinite order isometry $\alpha_1$ \cite[Lemma~4.10~(a)]{BP83}. One can verify that $\bgamma^2$ induces on $S_Y$ the isometry $\alpha_4\coloneqq\alpha_1^4$. This implies, together with the description of $\mathrm{aut}(Y)$ from \cite{BS22}, that $\mathrm{aut}(Y)\cap G$ is generated by $\alpha_4$.
\end{remark}

\subsection{Elliptic fibrations on \texorpdfstring{$Y$}{Lg}}

The elliptic fibrations on $Y$ up to $\mathrm{aut}(Y)$ are classified in \cite[Theorem~1.21]{BS22}. In the next lemma we give explicit representatives in term of the given smooth rational curves $\mathcal{R}(Y)$. First, it will be convenient to recall the following terminology and notation from \cite{MRS22Paper}.

\begin{definition}
\label{def:type}
The \emph{type of an elliptic configuration} supported on $\mathbb{R}(Y)$ is the associated extended Dynkin diagram, together with the information of being a fiber or a half-fiber. The \emph{type of an elliptic fibration} is the formal sum of the types of its singular fibers supported on $\mathcal{R}(Y)$. For instance, $(2\widetilde{A}_1^{\mathrm{HF}}+\widetilde{D}_6^{\mathrm{F}})$ refers to the fibrations whose singular fibers are three elliptic configurations, two of type $\widetilde{A}_1^{\mathrm{HF}}$ and one of type $\widetilde{D}_6^{\mathrm{F}}$.
\end{definition}

\begin{lemma} \label{lem:145TypesEF}
Up to the action of $\mathrm{aut}(Y)$ we have the following types of elliptic fibrations:
\begin{table}[H]
\centering\renewcommand\cellalign{lc}
\setcellgapes{3pt}\makegapedcells
\begin{tabular}{|c|c|}
\hline
Type & Representatives of elliptic fibrations up to $\mathrm{aut}(Y)$
\\
\hline
$\widetilde{A}_7^{\mathrm{HF}}$
&
$R_0+R_2+R_3+R_4+R_5+R_6+R_7+R_9$
\\
\hline
$\widetilde{A}_1^{\mathrm{F}} + \widetilde{E}_7^{\mathrm{F}}$
&
$\left(A:=\frac{1}{2}(R_8+R_{8,-2}),~\frac{1}{2}(3R_0+2R_2+R_3+3R_5+2R_6+R_7+4R_4+2R_1)\right)$
\\
\hline
$\widetilde{D}_8^{\mathrm{F}}$
&
\makecell{
$D_a:=\frac{1}{2}(R_0+R_1+R_3+2R_4+2R_5+2R_6+2R_7+2R_9+R_8)$
\\[1ex]
$D_b:=\frac{1}{2}(2R_0+R_1+2R_2+2R_3+2R_4+R_5+R_7+2R_9+R_8)$
}
\\
\hline
$\widetilde{E}_8^{\mathrm{F}}$
&
\makecell{
$E_a:=\frac{1}{2}(R_1+2R_4+3R_5+4R_6+5R_7+6R_9+4R_0+2R_2+3R_8)$
\\[1ex]
$E_b:=\frac{1}{2}(2R_6+4R_7+6R_9+5R_0+4R_2+3R_3+2R_4+R_1+3R_8)$
}
\\
\hline
\end{tabular}
\end{table}
\end{lemma}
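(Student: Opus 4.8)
The plan is to classify elliptic fibrations up to $\mathrm{aut}(Y)$ by leveraging the abstract classification in \cite[Theorem~1.21]{BS22}, which tells us how many $\mathrm{aut}(Y)$-orbits of elliptic fibrations exist and of which ADE-type, and then to exhibit for each orbit an explicit representative supported on the smooth rational curves in $\mathcal{R}(Y)$. First I would extract from \cite{BS22} (via the data \cite{SB20data}) the list of fibration types and orbit counts: the statement records one orbit of type $\widetilde{A}_7^{\mathrm{HF}}$, one of type $\widetilde{A}_1^{\mathrm{F}}+\widetilde{E}_7^{\mathrm{F}}$, two of type $\widetilde{D}_8^{\mathrm{F}}$, and two of type $\widetilde{E}_8^{\mathrm{F}}$. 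For each candidate configuration listed in the table, I would verify that the prescribed $\mathbb{Z}$-linear combination of the $R_i$ is genuinely an elliptic configuration of the claimed type: this amounts to checking that the support forms the correct extended Dynkin diagram (using the intersection numbers $R_i\cdot R_j$, which are read off from the $E_{10}$-basis coordinates in Example~\ref{ex:how-to-extract-the-data} together with the matrix $M_{E_{10}}$) and that the coefficients match the multiplicities of the components in Kodaira's classification.

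The verification that each listed divisor is isotropic and effective of the right type is a routine matrix computation, entirely analogous to the checking procedure described in \S\,\ref{sec:test-output}: I would assemble the coordinate vectors of the relevant $R_i$ (including $R_{8,-2}$, obtained from $R_8\cdot\gamma^{-2}$ via \eqref{eqn:parametriccurve145}), compute the Gram matrix of each support, and confirm it realizes $\widetilde{A}_7,\widetilde{A}_1,\widetilde{E}_7,\widetilde{D}_8$, or $\widetilde{E}_8$. Whether a given configuration is a fiber or a half-fiber is recorded by the $\mathrm{F}/\mathrm{HF}$ superscript; concretely, a configuration $C$ is a half-fiber exactly when $\frac{1}{2}[C]\notin S_Y$, i.e. when the coordinate vector of $C$ is not divisible by $2$, which I would check directly. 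For the type $\widetilde{A}_1^{\mathrm{F}}+\widetilde{E}_7^{\mathrm{F}}$ I must also confirm that the two configurations $A$ and the displayed $\widetilde{E}_7$ divisor are fibers of a single fibration, which follows once their classes are numerically equal (both equal to the half-pencil class $F$ with $2F$ the elliptic pencil), a linear-algebra check in $S_Y$.

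The genuinely delicate step is establishing completeness, namely that these six representatives exhaust all $\mathrm{aut}(Y)$-orbits and that the two representatives each of type $\widetilde{D}_8^{\mathrm{F}}$ and of type $\widetilde{E}_8^{\mathrm{F}}$ lie in \emph{distinct} orbits rather than being identified by some element of $\mathrm{aut}(Y)$. Completeness is inherited directly from \cite[Theorem~1.21]{BS22}: the abstract classification already guarantees the orbit count, so I only need to match my explicit list against it type by type. To separate $D_a$ from $D_b$ (and $E_a$ from $E_b$) within the same ADE-type, I would argue that no element $\gamma^n\epsilon^i$ of $\mathrm{aut}(Y)$ carries one class to the other; using Lemma~\ref{lem:145Group} this reduces to a finite check modulo the known action of $\gamma$ and $\epsilon$ on $S_Y$, exploiting that the orbit of any fixed class under the infinite dihedral group $\mathrm{aut}(Y)\cong\mathbb{Z}\rtimes(\mathbb{Z}/2\mathbb{Z})$ is governed by the quadratic growth visible in \eqref{eqn:parametriccurve145}. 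I expect this orbit-separation to be the main obstacle, since it requires either a distinguishing $\mathrm{aut}(Y)$-invariant of the fibration classes or a direct computation showing the relevant linear systems of equations $[D_a]=[D_b]\cdot(\gamma^n\epsilon^i)$ have no integer solution; in practice the cleanest route is to appeal to the orbit data already tabulated in \cite{SB20data}, which records precisely this information and thereby certifies that the six listed fibrations represent exactly the six distinct $\mathrm{aut}(Y)$-orbits.
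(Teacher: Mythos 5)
Your skeleton matches the paper's: completeness of the list is inherited from \cite[Theorem~1.21]{BS22}, the types of the explicit representatives are checked directly from the intersection graph in Figure~\ref{fig:145TenRats}, and the real content is showing that $D_a,D_b$ (respectively $E_a,E_b$) lie in distinct $\mathrm{aut}(Y)$-orbits. The gap is in how you propose to finish that last step. Your preferred route --- appealing to the orbit data tabulated in \cite{SB20data} and \cite{SB20ellipticfibrations} --- is circular: those tables certify that there are exactly two orbits of type $\widetilde{D}_8^{\mathrm{F}}$ and two of type $\widetilde{E}_8^{\mathrm{F}}$, but they cannot tell you which of \emph{your} two explicit divisors lies in which orbit. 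Deciding that membership is precisely the orbit-separation problem you set out to solve; it does not disappear by pointing at the abstract classification, since both candidates have the same ADE type and hence the same ``tabulated'' invariants.

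The paper resolves this with a short argument you missed, resting on isotropy: if $D_a$ were in the orbit of $D_b$, then $D_a=D_b\cdot\sigma$ for some $\sigma\in\mathrm{aut}(Y)$, and hence $D_a\cdot(D_b\cdot\sigma)=D_a\cdot D_a=0$; so it suffices to show that $D_a$ has \emph{nonzero} intersection with every element of the orbit of $D_b$. The orbit itself is tamed by stabilizer observations: $D_b\cdot\epsilon=D_b$, so by Lemma~\ref{lem:145Group} the orbit of $D_b$ is just $\{D_{b,k}:=D_b\cdot\gamma^k\}_{k\in\mathbb{Z}}$, and one computes inductively
\[
D_a\cdot D_{b,k}=k^2+\tfrac{1}{2}(-1)^k+\tfrac{1}{2}\neq0 \quad\text{for all }k\in\mathbb{Z}.
\]
For $\widetilde{E}_8^{\mathrm{F}}$ one has $E_b\cdot\epsilon\neq E_b$, so the dihedral relation $E_b\cdot\gamma^k\cdot\epsilon=E_b\cdot\epsilon\cdot\gamma^{-k}$ is used to enumerate the orbit as the two families $E_{b,k},E_{b,\epsilon,k}$, whose intersections with $E_a$ are $4k^2+3$ and $4k^2+2(-1)^k-4k+3$, again never zero. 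Your alternative route --- comparing coordinate vectors of $D_a$ and $D_b\cdot\gamma^k$ and invoking the quadratic growth visible in \eqref{eqn:parametriccurve145} to reduce to finitely many $k$ --- would also work and is in the same spirit, but as written it is only a gesture: it still requires the closed-form (or inductive) computation of the $\gamma^k$-action on these classes, which is exactly the computation the paper performs, merely packaged as intersection numbers rather than as ten coordinate equalities. Absent either that computation or the isotropy trick, the orbit-separation step, which you yourself identify as the crux, remains unproved.
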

\begin{proof}
Let us introduce the notation $A,D_a,D_b,E_a,E_b$ as in the table above. Using the graph in Figure~\ref{fig:145TenRats}, one can check directly that the given representatives have the corresponding type. What is left to prove is that the two representatives for $\widetilde{D}_8^{\mathrm{F}}$ and $\widetilde{E}_8^{\mathrm{F}}$ are not in the same $\mathrm{aut}(Y)$-orbit.

We start by discussing the case of $\widetilde{D}_8^{\mathrm{F}}$. 
We need to show that $D_a \neq D_b \cdot\sigma$ for all $\sigma\in\mathrm{aut}(Y)$. As $D_b \cdot\epsilon=D_b$, the possible $D_b \cdot\sigma$ are given by $D_{b,k}:=D_b\cdot\gamma^k$ for $k\in\mathbb{Z}$. Therefore, what we need to show is that $D_a$ is different from $D_{b,k}$ for all $k\in\mathbb{Z}$. For this purpose, it is enough to compute the intersection between $D_a$ and $D_{b,k}$ for every $k$, and show that this is never equal to $0$. It can be checked inductively that
\[
D_a \cdot D_{b,k} = k^2 + \frac{1}{2}(-1)^k + \frac{1}{2},
\]
which is never equal to zero for $k\in\mathbb{Z}$.

We use similar ideas for $\widetilde{E}_8^{\mathrm{F}}$. 
In this case, $E_b\cdot\epsilon\neq E_b$, so we consider
\[
E_{b,\epsilon}:=E_{b}\cdot\epsilon,~E_{b,k}:=E_{b}\cdot\gamma^k,~E_{b,\epsilon,k}:=E_{b,\epsilon}\cdot\gamma^k.
\]
Notice that by Lemma \ref{lem:145Group} $E_{b}\cdot\gamma^k \cdot \epsilon = E_{b}\cdot\epsilon \cdot \gamma^{-k}$.
Therefore, to prove that $E_{a}$ is not in the same orbit as $E_{b}$, it is enough to prove that the intersections $E_{a}\cdot E_{b,k}$ and $E_{a}\cdot E_{b,\epsilon,k}$ are nonzero for all $k\in\mathbb{Z}$. We have that
\begin{align*}
E_a\cdot E_{b,k}&=4k^2+3,\\
E_a\cdot E_{b,\epsilon,k}&=4k^2+2(-1)^k-4k+3.
\end{align*}
These are nonzero for all $k\in\mathbb{Z}$, proving what we needed.
\end{proof}

\subsection{Computing \texorpdfstring{$\mathrm{nd}(Y)$}{Lg}}

We start by computing the maximum number of elliptic fibration of the same type which can appear in the same isotropic sequence (this idea was also used in \cite{MRS22Paper}).

\begin{lemma}
\label{lem:145MaximumNumber}
On the Enriques surface $Y$, the maximum number of elliptic fibrations of the same type (see Lemma~\ref{lem:145TypesEF}) that can appear in the same isotropic sequence is given as follows:
\begin{table}[H]
\centering\renewcommand\cellalign{lc}
\setcellgapes{3pt}\makegapedcells
\begin{tabular}{|c|c|}
\hline
Type & Maximum
\\
\hline
$\widetilde{A}_7^{\mathrm{HF}}$
&
$1$
\\
\hline
$\widetilde{A}_1^{\mathrm{F}}+\widetilde{E}_7^{\mathrm{F}}$
&
$1$
\\
\hline
$\widetilde{D}_8^{\mathrm{F}}$
&
$2$
\\
\hline
$\widetilde{E}_8^{\mathrm{F}}$
&
$2$
\\
\hline
\end{tabular}
\end{table}
\end{lemma}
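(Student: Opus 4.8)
The plan is to convert the statement into a purely combinatorial problem about half-fiber classes in $S_Y$ and then read off the answer from pairwise intersection numbers. The key reduction is that the two half-fibers of any genus one pencil differ by $K_Y$, hence are numerically equal, so a pencil determines a \emph{single} isotropic class in $S_Y$. Consequently ``$m$ fibrations of type $T$ in a common isotropic sequence'' means exactly ``$m$ distinct isotropic classes, each a half-fiber of a type-$T$ pencil, pairwise meeting in $1$.'' Every such class lies in the $\mathrm{aut}(Y)$-orbit of one of the representatives of type $T$ in Lemma~\ref{lem:145TypesEF}, so (isotropy being automatic for half-fibers) the number we want is the largest clique in the graph on the type-$T$ classes whose edges join pairs with intersection $1$.

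Next I would parametrise the orbits. By Lemma~\ref{lem:145Group} each element of $\mathrm{aut}(Y)$ is $\gamma^k$ or $\epsilon\gamma^k$, so the orbit of a representative $C$ is the union of the strand $\{C\cdot\gamma^k\}$ and the strand $\{C\cdot\epsilon\gamma^k\}$, the latter either coinciding with or disjoint from the former. Using the matrices $\epsilon,\gamma$, the parametric vector \eqref{eqn:parametriccurve145}, and the $E_{10}$ form, one computes the intersection of $C$ with every class in its orbit as a function of the relative strand parameter (this reduction is legitimate since $\gamma,\epsilon$ are isometries). The outcome I expect is that all within-orbit intersection functions are quadratic in $k$ with positive leading term and a $(-1)^k$ correction, and that \emph{none ever equals $1$}: concretely $A\cdot(A\cdot\gamma^k)=k^2+\tfrac12-\tfrac12(-1)^k$ and $D_a\cdot(D_a\cdot\gamma^k)=k^2+\tfrac12-\tfrac12(-1)^k$ (same for $D_b$), while for $\widetilde E_8$, where $\epsilon$ fixes neither representative, $E_a\cdot(E_a\cdot\gamma^k)=4k^2+2-2(-1)^k$ and $E_a\cdot(E_a\cdot\epsilon\gamma^k)=4(k^2-k+1)$ (same for $E_b$). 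Each vanishes only at $k=0$ and is at least $2$ otherwise.

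The conclusion then follows orbit by orbit. For $\widetilde A_7^{\mathrm{HF}}$ the class $R_0+R_2+R_3+R_4+R_5+R_6+R_7+R_9$ is the sum of the $\widetilde A_7$-cycle, which the transpositions of $\epsilon$ and $\gamma$ in Lemma~\ref{lem:145ActionOnCurves} permute among themselves; so both generators fix this class, there is a \emph{unique} half-fiber of this type, and the maximum is $1$. For $\widetilde A_1^{\mathrm{F}}+\widetilde E_7^{\mathrm{F}}$ there is a single orbit (here $A\cdot\epsilon=A\cdot\gamma$) with no within-orbit edge, so the maximum is again $1$. For $\widetilde D_8$ and $\widetilde E_8$ there are exactly two orbits, since $D_a,D_b$ and $E_a,E_b$ lie in distinct orbits by Lemma~\ref{lem:145TypesEF}, and there are no within-orbit edges; hence a clique meets each orbit at most once and the maximum is $\le 2$. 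The bound is attained because $D_a\cdot D_b=1$ and $E_a\cdot(E_b\cdot\epsilon\gamma)=1$, both already visible from the intersection formulas computed in the proof of Lemma~\ref{lem:145TypesEF}.

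The main obstacle is the within-orbit computation for $\widetilde E_8$. Since $\epsilon$ stabilises neither $E_a$ nor $E_b$, each orbit splits into two $\gamma$-strands, and one must bound the intersection of the base class with \emph{both} strands, carefully applying the dihedral relation $\epsilon\gamma=\gamma^{-1}\epsilon$ to collapse everything to the relative parameter. The substantive point is to confirm that the value $1$ is genuinely avoided inside an orbit; because the relevant functions grow like $4k^2$, this amounts to checking finitely many small $k$, but the per-strand bookkeeping is where an error could hide. The $\widetilde D_8$ case runs identically but is easier, as $\epsilon$ fixes $D_a$ and $D_b$ and so leaves a single strand per orbit.
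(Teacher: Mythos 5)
Your proposal is correct and follows essentially the same route as the paper's proof: the same parametrization of orbits via the dihedral relation $\epsilon\gamma=\gamma^{-1}\epsilon$, the same within-orbit and cross-orbit intersection formulas (your claimed values $k^2+\tfrac12-\tfrac12(-1)^k$, $4k^2+2-2(-1)^k$, $4(k^2-k+1)$, etc.\ match the paper's computations exactly), and the same case-by-case conclusions. The only difference is organizational: you state the upper bound as ``no within-orbit edges, hence a clique meets each $\mathrm{aut}(Y)$-orbit at most once,'' while the paper enumerates the pair types up to $\mathrm{aut}(Y)$ and then argues non-extendability explicitly --- logically the same argument.
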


\begin{proof}
We analyze each case separately.

$\mathbf{(\widetilde{A}_7^{\mathbf{HF}})}$ The generators $\epsilon$ and $\gamma$ of $\mathrm{aut}(Y)$ map $R_0+R_2+R_3+R_4+R_5+R_6+R_7+R_9$ to itself, so there is exactly one elliptic configuration on $Y$ of type $\widetilde{A}_1^{\mathrm{HF}}$. Therefore, the maximum we are looking for is $1$.

$\mathbf{(\widetilde{A}_1^{\mathbf{F}}+\widetilde{E}_7^{\mathbf{F}})}$ Consider the representative $A$ of type $\widetilde{A}_1$. First, we observe that
\[
A\cdot(\epsilon\gamma^{-1})=A.
\]
This implies that, for all $\sigma\in\mathrm{aut}(Y)$, $A\cdot\sigma=A\cdot\gamma^k$ for some $k\in\mathbb{Z}$. This is because $\sigma=\gamma^n\epsilon^i$ for some $n\in\mathbb{Z}$ and $i\in\{0,1\}$, and either $i=0$, or $i=1$ and hence
\[
A\cdot(\gamma^n\epsilon)=A\cdot(\epsilon\gamma^{-n})=A\cdot(\epsilon\gamma^{-1})\gamma^{-n+1}=A\cdot\gamma^{-n+1}.
\]
Therefore, we define
\[
A_{k}:=A\cdot\gamma^k,
\]
which give all the other fibrations of the same type $\widetilde{A}_1^{\mathrm{F}}$. Now, assume that two diagrams of type $\widetilde{A}_1^{\mathrm{F}}$ are in the same isotropic sequence. Then, up to the action of $\mathrm{aut}(Y)$, we can assume that one of them to be $A_{0} = A$ while the other is $A_{k}$ for some $k\in\mathbb{Z}\setminus\{0\}$. We compute that
\[
A_{0}\cdot A_{k}=k^2 - \frac{1}{2}(-1)^k + \frac{1}{2},
\]
which is always even, and hence it never equals $1$. In conclusion, there can be only one diagram of type $\widetilde{A}_1^{\mathrm{F}}$ in the same non-degenerate isotropic sequence.

$\mathbf{(\widetilde{D}_8^{\mathbf{F}})}$ Recall the curves $D_{a},D_{b},D_{b,k}$ from the proof of Lemma~\ref{lem:145TypesEF}. Also, note that $D_{a}\cdot\epsilon=D_{a}$ and introduce $D_{a,k}:=D_{a}\cdot\gamma^k$. These give all the fibers of type $\widetilde{D}_8$ on $Y$. Now, assume two diagrams of type $\widetilde{D}_8$ are in the same isotropic sequence. Up to $\mathrm{aut}(Y)$ we have the following possibilities:
\[
(D_{a},D_{a,k}),~(D_{b},D_{b,k}),~(D_{a},D_{b,k}).
\]
We can compute the pairwise intersections as
\begin{align*}
D_{a} \cdot D_{a,k} &= k^2 - \frac{1}{2}(-1)^k + \frac{1}{2},\\
D_{b} \cdot D_{b,k} &= k^2 - \frac{1}{2}(-1)^k + \frac{1}{2},\\
D_{a} \cdot D_{b,k} &= k^2 + \frac{1}{2}(-1)^k + \frac{1}{2}.
\end{align*}
The first two are different from $1$ for every $k$, while the third is equal to $1$ if and only if $k=0,\pm1$. Finally, we observe that $(D_{a},D_{b,k})$ cannot be further extended, as adding $D_{a,h}$ or $D_{b,h}$ for some $h\in\mathbb{Z}$ will result in two elements of the sequence having intersection not equal to $1$.

$\mathbf{(\widetilde{E}_8^{\mathbf{F}})}$ Recall the curves $E_{a},E_{b},E_{b,\epsilon},E_{b,k},E_{b,\epsilon,k}$ from the proof of Lemma~\ref{lem:145TypesEF}. Also, introduce 
$E_{a,\epsilon}:=E_{a}\cdot\epsilon$, 
$E_{a,k}:=E_{a}\cdot\gamma^k$, and 
$E_{a,\epsilon,k}:=E_{a,\epsilon}\cdot\gamma^k$. 
These give all the fibers of type $\widetilde{E}_8$ on $Y$. Now, assume that two diagrams of type $\widetilde{E}_8^{\mathrm{F}}$ are in the same isotropic sequence. Up to $\mathrm{aut}(Y)$ we have the following possibilities:
\[
(E_{a},E_{a,k}),~
(E_{a},E_{a,\epsilon,k}),~
(E_{b},E_{b,k}),~
(E_{b},E_{b,\epsilon,k}),~
(E_{a},E_{b,k}),~
(E_{a},E_{b,\epsilon,k}).
\]
We can compute the pairwise intersections as
\begin{enumerate}

\item $E_{a}\cdot E_{a,k}=4k^2 - 2(-1)^k + 2$;

\item $E_{a}\cdot E_{a,\epsilon,k}=4k^2 - 4k + 4$;

\item $E_{b}\cdot E_{b,k}=4k^2 - 2(-1)^k + 2$;

\item $E_{b}\cdot E_{b,\epsilon,k}=4k^2 - 4k + 4$;

\item $E_{a}\cdot E_{b,k}=4k^2 + 3$;

\item $E_{a}\cdot E_{b,\epsilon,k}=4k^2 + 2(-1)^k - 4k + 3$.

\end{enumerate}
We observe that in cases (1), (2), (3), (4), (5) the intersection product is not equal to $1$ for any $k$, while in case (6) $E_{a}\cdot E_{b,\epsilon,k}=1$ if and only if $k=1$. From the above calculations also follows that the isotropic sequence $(E_{a},E_{b,\epsilon,k})$ cannot be further extended because by adding a curve $E_{a,h}$, $E_{a,\epsilon,h}$, $E_{b,h}$, or $E_{b,\epsilon,h}$ for some $h\in\mathbb{Z}$ we will have a pairwise intersection not equal to $1$.
\end{proof}

\begin{proposition}
\label{prop:cnd145}
Let $Y$ be the realizable $(\tau,\overline{\tau})$-generic Enriques surface $145$ in \cite[Table~1]{BS22}. Then, $\mathrm{cnd}(S)=4$. 
\end{proposition}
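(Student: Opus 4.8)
The plan is to compute $\mathrm{cnd}(Y)$ by exploiting the complete classification of elliptic fibrations (Lemma~\ref{lem:145TypesEF}) together with the bounds on repetitions within a single isotropic sequence (Lemma~\ref{lem:145MaximumNumber}). Since $\mathsf{HF}(Y,\mathcal{R}(Y))$ consists exactly of the half-fiber classes arising from the four types $\widetilde{A}_7^{\mathrm{HF}}$, $\widetilde{A}_1^{\mathrm{F}}+\widetilde{E}_7^{\mathrm{F}}$, $\widetilde{D}_8^{\mathrm{F}}$, and $\widetilde{E}_8^{\mathrm{F}}$, any non-degenerate isotropic sequence realizing $\mathrm{cnd}(Y)$ is built from these classes. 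The strategy is to first produce a sequence of length $4$ to establish $\mathrm{cnd}(Y)\geq 4$, and then argue that no sequence of length $5$ can exist, giving the matching upper bound.

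For the lower bound, I would exhibit an explicit non-degenerate isotropic sequence of length $4$ drawn from the representatives in Lemma~\ref{lem:145TypesEF}, checking by direct computation (using the coordinate vectors already provided and the matrix $M_{E_{10}}$) that the four chosen half-fiber classes $f_1,\dots,f_4$ satisfy $f_i\cdot f_j = 1-\delta_{ij}$. A natural candidate mixes the types whose maxima in Lemma~\ref{lem:145MaximumNumber} are $2$, for instance combining a pair $(D_a,D_{b,k})$ of $\widetilde{D}_8^{\mathrm{F}}$-fibrations with a pair $(E_a,E_{b,\epsilon,1})$ of $\widetilde{E}_8^{\mathrm{F}}$-fibrations, after verifying all the cross-intersections equal $1$.

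For the upper bound $\mathrm{cnd}(Y)\leq 4$, the key combinatorial input is Lemma~\ref{lem:145MaximumNumber}: any isotropic sequence contains at most one $\widetilde{A}_7^{\mathrm{HF}}$ class, at most one $\widetilde{A}_1^{\mathrm{F}}+\widetilde{E}_7^{\mathrm{F}}$ class, at most two $\widetilde{D}_8^{\mathrm{F}}$ classes, and at most two $\widetilde{E}_8^{\mathrm{F}}$ classes. This already bounds the length by $1+1+2+2=6$, so the work is to rule out lengths $5$ and $6$ by analyzing the remaining cross-type intersections. Concretely, I would show that the pairs of types which do achieve their maximum of $2$ cannot be combined with additional classes: one computes the pairwise intersections between the $\widetilde{A}_7^{\mathrm{HF}}$, $\widetilde{A}_1^{\mathrm{F}}+\widetilde{E}_7^{\mathrm{F}}$ classes and the specific $\widetilde{D}_8^{\mathrm{F}}$, $\widetilde{E}_8^{\mathrm{F}}$ pairs that appear in a maximal sequence, and checks that attempting to add a fifth class always forces some intersection away from $1$. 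The observations at the ends of the $\widetilde{D}_8^{\mathrm{F}}$ and $\widetilde{E}_8^{\mathrm{F}}$ cases in Lemma~\ref{lem:145MaximumNumber}—that the realizing pairs $(D_a,D_{b,k})$ and $(E_a,E_{b,\epsilon,k})$ cannot be further extended within their own type—are the crucial leverage here.

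The main obstacle I anticipate is the cross-type bookkeeping: one must compute intersection numbers between classes of different types, each parametrized by an integer $k$ via the $\gamma$-action, and confirm that no combination of one $\widetilde{A}_7^{\mathrm{HF}}$, one $\widetilde{A}_1^{\mathrm{F}}+\widetilde{E}_7^{\mathrm{F}}$, two $\widetilde{D}_8^{\mathrm{F}}$, and two $\widetilde{E}_8^{\mathrm{F}}$ classes can all pairwise intersect in $1$. This is a finite but delicate case analysis, since up to the $\mathrm{aut}(Y)\cong\mathbb{Z}\rtimes(\mathbb{Z}/2\mathbb{Z})$-action one may normalize one class in each type, reducing each cross-intersection to a quadratic polynomial in a single parameter $k$ (as in the formulas already computed in Lemma~\ref{lem:145MaximumNumber}). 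I expect that these quadratics, combined with the parity and growth arguments already used, close off every attempt to reach length $5$, thereby pinning down $\mathrm{cnd}(Y)=4$.
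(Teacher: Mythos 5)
Your overall architecture (explicit length-$4$ sequence for the lower bound; Lemma~\ref{lem:145MaximumNumber} plus cross-type intersection computations for the upper bound) matches the paper's, but your proposed lower-bound candidate is impossible, and its impossibility is precisely the engine of the upper-bound argument. Up to $\mathrm{aut}(Y)$ any $\widetilde{E}_8^{\mathrm{F}}$ class may be taken to be $E_a$ (or $E_b$, which is analogous), and one computes
\[
E_{a}\cdot D_{a,k}=2k^2 - \tfrac{1}{2}(-1)^k - k + \tfrac{3}{2},\qquad
E_{a}\cdot D_{b,k}=2k^2 + \tfrac{1}{2}(-1)^k - k + \tfrac{3}{2},
\]
where the first equals $1$ only for $k=0$ and the second is never $1$. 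Hence a given $\widetilde{E}_8^{\mathrm{F}}$ class has intersection $1$ with exactly \emph{one} $\widetilde{D}_8^{\mathrm{F}}$ class, so no isotropic sequence can contain two $\widetilde{D}_8^{\mathrm{F}}$ classes together with even a single $\widetilde{E}_8^{\mathrm{F}}$ class. Your candidate $(D_a,D_{b,k},E_a,E_{b,\epsilon,1})$ therefore cannot exist: the verification of "all cross-intersections equal $1$" that you defer is bound to fail (already $E_a\cdot D_{b,k}\neq 1$ for every $k$). The correct sequence, given in the paper and in \cite[Remark~4.3]{MMV22a}, avoids $\widetilde{E}_8^{\mathrm{F}}$ entirely: it consists of the $\widetilde{A}_7^{\mathrm{HF}}$ class, two $\widetilde{D}_8^{\mathrm{F}}$ classes, and the $\widetilde{E}_7^{\mathrm{F}}$ class $\frac{1}{2}G_3$ --- exactly the two types your candidate omits.

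For the upper bound your sketch is pointed in the right direction but omits the two computations that actually close the argument, and it is these that you would need to supply. First, the $\widetilde{A}_7^{\mathrm{HF}}$ class is unique and $\mathrm{aut}(Y)$-invariant, and it has intersection $2$ with every $\widetilde{E}_8^{\mathrm{F}}$ class ($\widetilde{A}_7\cdot E_{a,k}=\widetilde{A}_7\cdot E_{a,\epsilon,k}=\widetilde{A}_7\cdot E_{b,k}=\widetilde{A}_7\cdot E_{b,\epsilon,k}=2$); hence these two types never coexist in an isotropic sequence, which eliminates three of the four length-$5$ type combinations permitted by Lemma~\ref{lem:145MaximumNumber}, leaving only $(\widetilde{A}_1^{\mathrm{F}}+\widetilde{E}_7^{\mathrm{F}})+2\times\widetilde{D}_8^{\mathrm{F}}+2\times\widetilde{E}_8^{\mathrm{F}}$. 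Second, the uniqueness statement above (one $\widetilde{D}_8^{\mathrm{F}}$ per $\widetilde{E}_8^{\mathrm{F}}$) kills that last combination. Without these two facts your "finite but delicate case analysis" remains a plan rather than a proof; indeed, your belief that $2\times\widetilde{D}_8^{\mathrm{F}}+2\times\widetilde{E}_8^{\mathrm{F}}$ is realizable shows the analysis was not pushed far enough to notice that your lower and upper bounds contradict each other.
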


\begin{proof}
An explicit example of non-degenerate isotropic sequence of length $4$, also described in \cite[Remark~4.3]{MMV22a}, is given by the numerical equivalence classes of
\begin{align*}
R_{0}+R_{2}+R_{3}+R_{4}+R_{5}+R_{6}+R_{7}+R_{9}\qquad&(\widetilde{A}_7^{\mathrm{HF}})\\
\frac{1}{2}(R_{0}+R_{8}+R_{1}+R_{3}+2R_{9}+2R_{7}+2R_{6}+2R_{5}+2R_{4})\qquad&(\widetilde{D}_8^{\mathrm{F}})\\
\frac{1}{2}(R_{1}+R_{5}+R_{7}+R_{8}+2R_{4}+2R_{3}+2R_{2}+2R_{0}+2R_{9})\qquad&(\widetilde{D}_8^{\mathrm{F}})\\
\frac{1}{2}(2R_{2}+3R_{0}+4R_{9}+3R_{7}+2R_{6}+R_{5}+2R_{8}+R_{3})\qquad&(\widetilde{E}_7^{\mathrm{F}}).
\end{align*}
This shows $\mathrm{cnd}(Y)\geq 4$. To prove that equality holds, we assume by contradiction that there exists an isotropic sequence of five elements in $\mathsf{HF}(Y,\mathcal{R}(Y))$ (see Definition~\ref{def:relative-cnd-invariant}). By Lemma~\ref{lem:145MaximumNumber}, the types of the corresponding five elliptic fibrations can only be one of the following combinations:
\begin{itemize}

\item $(\widetilde{E}_7^{\mathrm{F}} + \widetilde{A}_1^{\mathrm{F}}),~2 \times (\widetilde{D}_8^{\mathrm{F}}),~2 \times (\widetilde{E}_8^{\mathrm{F}})$;

\item $(\widetilde{A}_7^{\mathrm{HF}}),~2 \times (\widetilde{D}_8^{\mathrm{F}}),~2 \times (\widetilde{E}_8^{\mathrm{F}})$;

\item $(\widetilde{A}_7^{\mathrm{HF}}),~(\widetilde{E}_7^{\mathrm{F}} + \widetilde{A}_1^{\mathrm{F}}),~(\widetilde{D}_8^{\mathrm{F}}),~2 \times (\widetilde{E}_8^{\mathrm{F}})$;

\item $(\widetilde{A}_7^{\mathrm{HF}}),~(\widetilde{E}_7^{F} + \widetilde{A}_1^{\mathrm{F}}),~2 \times (\widetilde{D}_8^{\mathrm{F}}),~(\widetilde{E}_8^{\mathrm{F}})$.

\end{itemize}
Recall from the beginning of the proof of Lemma~\ref{lem:145MaximumNumber} that the half-fiber of type $\widetilde{A}_7$ is invariant under $\mathrm{aut}(Y)$. Moreover, for all $k\in\mathbb{Z}$, we have that
\[
\widetilde{A}_7\cdot E_{a,k}=\widetilde{A}_7\cdot E_{a,\epsilon,k}=\widetilde{A}_7\cdot E_{b,k}=\widetilde{A}_7\cdot E_{b,\epsilon,k}=2.
\]
This implies that no isotropic sequence on $Y$ contains an elliptic fibration of type $\widetilde{A}_7^{\mathrm{HF}}$ and another of type $\widetilde{E}_8^{\mathrm{F}}$. So, the only combination of types allowed in the isotropic sequence of length $5$ is
\begin{equation}
\label{eq:sequence-of-types-allowed}
(\widetilde{A}_1^{\mathrm{F}} + \widetilde{E}_7^{\mathrm{F}}),~2 \times (\widetilde{D}_8^{\mathrm{F}}),~2 \times (\widetilde{E}_8^{\mathrm{F}}).
\end{equation}
Now consider one of the two half-fibers of type $\widetilde{E}_8$. Up to $\mathrm{aut}(Y)$, this is either $E_{a}$ or $E_{b}$. Assume we have $E_{a}$ (an analogous argument holds for $E_{b}$). We now show that there exists exactly one half-fiber of type $\widetilde{D}_8$ which has intersection $1$ with $E_{a}$, giving a contradiction as we have two of such half-fibers from \eqref{eq:sequence-of-types-allowed}. We consider then the following intersection numbers:
\begin{itemize}

\item $E_{a}\cdot D_{a,k}=2k^2 - \frac{1}{2}(-1)^k - k + \frac{3}{2}=1$ if and only if $k=0$;

\item $E_{a}\cdot D_{b,k}=2k^2 + \frac{1}{2}(-1)^k - k + \frac{3}{2}\neq1$ for all $k\in\mathbb{Z}$.

\end{itemize}
So, $D_{a,0}=D_{a}$ is the only half-fiber of type $\widetilde{D}_8$ such that $E_{a}\cdot D_{a}=1$. In conclusion, we cannot have on $Y$ an isotropic sequence of elements in $\mathsf{HF}(Y,\mathcal{R}(Y))$ of length $5$, hence $\mathrm{cnd}(S)=4$.
\end{proof}

\begin{theorem}
\label{thm:nd-for-Enriques-145}
Let $Y$ be the $(\tau,\overline{\tau})$-generic Enriques surface $145$ in \cite[Table~1]{BS22}. Then, we have that $\mathrm{nd}(Y)=4$. 
\end{theorem}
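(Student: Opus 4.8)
The plan is to deduce the theorem from the work already in place. Recall the chain of inequalities $\mathrm{cnd}(Y,\mathcal{R})\leq\mathrm{cnd}(Y)\leq\mathrm{nd}(Y)$ and that Proposition~\ref{prop:cnd145} gives $\mathrm{cnd}(Y)=4$. Thus it suffices to establish the reverse inequality $\mathrm{nd}(Y)\leq\mathrm{cnd}(Y)=4$, which by the sandwich forces $\mathrm{nd}(Y)=4$. The cleanest route is to invoke Proposition~\ref{prop:CndEqualNd}: if every elliptic fibration on $Y$ admits a fiber or a half-fiber supported on the union of smooth rational curves, then $\mathrm{nd}(Y)=\mathrm{cnd}(Y)$, and we are done.

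First I would verify the hypothesis of Proposition~\ref{prop:CndEqualNd} for $Y=Y_{145}$. The point is that we have an essentially complete combinatorial picture of $Y$: the Corollary classifying $\mathcal{R}(Y)$ tells us that the smooth rational curves are exactly $R_0,R_2,\ldots,R_7,R_9$ together with the orbit $\{R_{8,k}\}_{k\in\mathbb{Z}}$, and Lemma~\ref{lem:145TypesEF} classifies all elliptic fibrations on $Y$ up to $\mathrm{aut}(Y)$ into the four types $\widetilde{A}_7^{\mathrm{HF}}$, $\widetilde{A}_1^{\mathrm{F}}+\widetilde{E}_7^{\mathrm{F}}$, $\widetilde{D}_8^{\mathrm{F}}$, and $\widetilde{E}_8^{\mathrm{F}}$. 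For each of these four representatives, the displayed expression is an explicit elliptic configuration supported on the $R_i$, and hence on smooth rational curves. Since every elliptic fibration is $\mathrm{aut}(Y)$-equivalent to one of these four, and $\mathrm{aut}(Y)$ maps smooth rational curves to smooth rational curves, every elliptic fibration on $Y$ has a fiber or half-fiber supported on $\mathcal{R}(Y)$. This confirms the hypothesis of Proposition~\ref{prop:CndEqualNd}.

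With the hypothesis verified, the proof concludes immediately:
\[
\mathrm{nd}(Y)=\mathrm{cnd}(Y)=4,
\]
where the first equality is Proposition~\ref{prop:CndEqualNd} and the second is Proposition~\ref{prop:cnd145}.

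The genuine mathematical content is therefore not in this final theorem but in the two propositions it rests on; the only potential obstacle here is making sure the hypothesis of Proposition~\ref{prop:CndEqualNd} is genuinely satisfied rather than merely plausible. The subtle point is that Proposition~\ref{prop:cnd145} bounds $\mathrm{cnd}(Y)$, which a priori only considers classes of half-fibers supported on smooth rational curves, whereas $\mathrm{nd}(Y)$ ranges over \emph{all} half-fibers. A naive worry is that some elliptic fibration might have both multiple fibers \emph{not} supported on rational curves, so that its half-fiber class would be invisible to the combinatorial invariant and could, in principle, participate in a longer isotropic sequence realizing $\mathrm{nd}(Y)>4$. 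Lemma~\ref{lem:145TypesEF} is exactly what rules this out: because the four orbit representatives are all supported on the $R_i$, no such ``hidden'' half-fiber exists, and the combinatorial and geometric invariants must coincide.
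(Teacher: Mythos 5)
Your proof is correct and follows essentially the same route as the paper: both arguments verify the hypothesis of Proposition~\ref{prop:CndEqualNd} for $Y_{145}$ and then conclude by combining $\mathrm{nd}(Y)=\mathrm{cnd}(Y)$ with Proposition~\ref{prop:cnd145}. The only cosmetic difference is that you justify the hypothesis via the orbit representatives in Lemma~\ref{lem:145TypesEF} together with the fact that $\mathrm{aut}(Y)$ preserves $\mathcal{R}(Y)$, whereas the paper cites \cite[Theorem~1.21]{BS22} and the tables in \cite{SB20ellipticfibrations} directly --- the same content, since Lemma~\ref{lem:145TypesEF} itself rests on those references.
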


\begin{proof}
By the classification of elliptic fibrations modulo automorphisms \cite[Theorem~1.21]{BS22} and the complete table of these, which can be found in \cite{SB20ellipticfibrations}, we obtain that every elliptic fibration on $Y$ admits a fiber or a half-fiber supported on the union of smooth rational curves. 
So, we can apply Proposition~\ref{prop:CndEqualNd} to argue that $\mathrm{nd}(Y)=\mathrm{cnd}(Y)$. Finally, $\mathrm{cnd}(Y)$ was shown to equal $4$ in Proposition~\ref{prop:cnd145}.
\end{proof}

\section{List of isotropic sequences}
\label{App:TableNonDeg}

Below we provide explicit non-degenerate isotropic sequences realizing the lower bounds in Theorem~\ref{thm:nds-of-tautaubar-generic-Enriques-surfaces}. Recall that for each case, the curves $R_i$ and the automorphisms $H_i$ are taken from the data \texttt{Rats-Ratstemp} and \texttt{Autrec-HHH} of \cite{SB20data} respectively, and they are numbered sequentially starting from zero. We only list the realizable $(\tau,\overline{\tau})$-generic Enriques surfaces, so we skip $Y_{26},Y_{48},Y_{49},\ldots$ (we refer to \cite[Table~1]{BS22} for the complete list of non-existent cases). Recall that the non-degeneracy invariants for $Y_1,Y_{172},Y_{184}$ are known: see Remark~\ref{rmk:nd-for1-172-184} and the discussion in \S\,\ref{sec:cndY1} about $\mathrm{cnd}(Y_1)$. A digital version of these data is available in \cite{MRS23Code}.

\

{
\scriptsize
\noindent \textbf{(1)} $\mathrm{cnd}(Y_{1}) \geq 7$. Sequence: $\frac{1}{2}(R_{0}+R_{1})$, $\frac{1}{2}(R_{0}+R_{2})$, $\frac{1}{2}(R_{0}+R_{3})$, $\frac{1}{2}(R_{0}+R_{10})$, $\frac{1}{2}(R_{0}+R_{11})$, $\frac{1}{2}(R_{0}+R_{12})$, $\frac{1}{2}(R_{0}+R_{13})$.\\ 
\textbf{(2)} $\mathrm{nd}(Y_{2}) = 10$. Sequence: $\frac{1}{2}(R_{0}+R_{4})$, $\frac{1}{2}(R_{0}+R_{6})$, $\frac{1}{2}(R_{0}+R_{12})$, $\frac{1}{2}(R_{0}+R_{15})$, $\frac{1}{2}(R_{0}+R_{29})$, $\frac{1}{2}(R_{0}+R_{31})$, $\frac{1}{2}(R_{1}+R_{2})$, $\frac{1}{2}(R_{1}+R_{3})$, $\frac{1}{2}(R_{1}+R_{22})$, $(R_{0}+R_{1})$.\\ 
\textbf{(3)} $\mathrm{nd}(Y_{3}) = 10$. Sequence: $\frac{1}{2}(R_{0}+R_{4})$, $\frac{1}{2}(R_{0}+R_{9})$, $\frac{1}{2}(R_{0}+R_{10})$, $\frac{1}{2}(R_{1}+R_{3})$, $\frac{1}{2}(R_{1}+R_{6})$, $\frac{1}{2}(R_{1}+R_{11})$, $\frac{1}{2}(R_{2}+R_{5})$, $\frac{1}{2}(R_{2}+R_{7})$, $\frac{1}{2}(R_{2}+R_{8})$, $\frac{1}{2}(R_{0}+R_{1}+R_{2})$.\\ 
\textbf{(4)} $\mathrm{nd}(Y_{4}) = 10$. Sequence: $\frac{1}{2}(R_{0}+R_{6})$, $\frac{1}{2}(R_{0}+R_{9})$, $\frac{1}{2}(R_{1}+R_{10})$, $\frac{1}{2}(R_{0}+R_{8})$, $\frac{1}{2}(R_{0}+R_{46})$, $\frac{1}{2}(R_{0}+R_{41})$, $\frac{1}{2}(R_{1}+R_{7})$, $\frac{1}{2}(R_{1}+R_{55})$, $\frac{1}{2}(R_{2}+R_{11})$, $\frac{1}{2}(R_{2}+R_{37})$.\\ 
\textbf{(5)} $\mathrm{nd}(Y_{5}) = 10$. Sequence: $\frac{1}{2}(R_{0}+R_{10})$, $\frac{1}{2}(R_{0}+R_{12})$, $\frac{1}{2}(R_{1}+R_{7})$, $\frac{1}{2}(R_{1}+R_{13})$, $\frac{1}{2}(R_{2}+R_{16})$, $\frac{1}{2}(R_{4}+R_{5})$, $\frac{1}{2}(R_{4}+R_{6})$, $\frac{1}{2}(R_{8}+R_{16})$, $\frac{1}{2}(R_{15}+R_{16})$, $\frac{1}{2}(R_{3}+R_{16})$.\\ 
\textbf{(6)} $\mathrm{nd}(Y_{6}) = 10$. Sequence: $\frac{1}{2}(R_{79}+R_{124})$, $\frac{1}{2}(R_{79}+R_{19})$, $\frac{1}{2}(R_{120}+R_{8})$, $\frac{1}{2}(R_{9}+R_{11})$, $\frac{1}{2}(R_{74}+R_{6})$, $\frac{1}{2}(R_{79}+R_{0}+R_{18})$, $\frac{1}{2}(R_{120}+R_{6}+R_{18})$, $\frac{1}{2}(R_{8}+R_{6}+R_{18})$, $\frac{1}{2}(R_{9}+R_{6}+R_{0})$, $\frac{1}{2}(R_{11}+R_{6}+R_{0})$.\\ 
\textbf{(7)} $\mathrm{nd}(Y_{7}) = 10$. Sequence: $\frac{1}{2}(R_{0}+R_{7})$, $\frac{1}{2}(R_{1}+R_{24})$, $\frac{1}{2}(R_{2}+R_{8})$, $\frac{1}{2}(R_{2}+R_{13})$, $\frac{1}{2}(R_{3}+R_{4})$, $\frac{1}{2}(R_{3}+R_{15})$, $\frac{1}{2}(R_{7}+R_{12})$, $\frac{1}{2}(R_{9}+R_{24})$, $\frac{1}{2}(R_{10}+R_{24})$, $\frac{1}{2}(R_{11}+R_{24})$.\\ 
\textbf{(8)} $\mathrm{nd}(Y_{8}) = 10$. Sequence: $\frac{1}{2}(R_{0}+R_{9})$, $\frac{1}{2}(R_{0}+R_{51})$, $\frac{1}{2}(R_{0}+R_{11})$, $\frac{1}{2}(R_{0}+R_{24})$, $\frac{1}{2}(R_{1}+R_{10})$, $\frac{1}{2}(R_{2}+R_{72})$, $\frac{1}{2}(R_{2}+R_{40})$, $\frac{1}{2}(R_{2}+R_{56})$, $\frac{1}{2}(R_{3}+R_{64})$, $\frac{1}{2}(R_{3}+R_{92})$.\\ 
\textbf{(9)} $\mathrm{nd}(Y_{9}) = 10$. Sequence: $\frac{1}{2}(R_{0}+R_{9})$, $\frac{1}{2}(R_{0}+R_{15})$, $\frac{1}{2}(R_{1}+R_{7})$, $\frac{1}{2}(R_{1}+R_{12})$, $\frac{1}{2}(R_{2}+R_{13})$, $\frac{1}{2}(R_{2}+R_{14})$, $\frac{1}{2}(R_{3}+R_{20})$, $\frac{1}{2}(R_{8}+R_{20})$, $\frac{1}{2}(R_{10}+R_{20})$, $\frac{1}{2}(R_{16}+R_{20})$.\\ 
\textbf{(10)} $\mathrm{nd}(Y_{10}) = 10$. Sequence: $\frac{1}{2}(R_{0}+R_{23})$, $\frac{1}{2}(R_{5}+R_{98})$, $\frac{1}{2}(R_{9}+R_{101})$, $\frac{1}{2}(R_{10}+R_{101})$, $\frac{1}{2}(R_{91}+R_{98})$, $\frac{1}{2}(R_{1}+R_{21})$, $\frac{1}{2}(R_{1}+R_{95})$, $\frac{1}{2}(R_{1}+R_{96})$, $\frac{1}{2}(R_{5}+R_{91})$, $\frac{1}{2}(R_{1}+R_{23}+R_{92})$.\\ 
\textbf{(11)} $\mathrm{nd}(Y_{11}) = 10$. Sequence: $\frac{1}{2}(R_{0}+R_{15})$, $\frac{1}{2}(R_{0}+R_{16})$, $\frac{1}{2}(R_{0}+R_{19})$, $\frac{1}{2}(R_{2}+R_{11})$, $\frac{1}{2}(R_{3}+R_{9})$, $\frac{1}{2}(R_{7}+R_{22})$, $\frac{1}{2}(R_{11}+R_{12})$, $\frac{1}{2}(R_{11}+R_{21})$, $\frac{1}{2}(R_{1}+R_{17})$, $\frac{1}{2}(R_{0}+R_{7}+R_{11})$.\\ 
\textbf{(12)} $\mathrm{nd}(Y_{12}) = 10$. Sequence: $\frac{1}{2}(R_{1}+R_{21})$, $\frac{1}{2}(R_{2}+R_{28})$, $\frac{1}{2}(R_{4}+R_{29})$, $\frac{1}{2}(R_{5}+R_{30})$, $\frac{1}{2}(R_{9}+R_{15})$, $\frac{1}{2}(R_{0}+R_{5}+R_{29})$, $\frac{1}{2}(R_{3}+R_{5}+R_{7})$, $\frac{1}{2}(R_{0}+R_{7}+R_{1}+R_{9})$, $\frac{1}{2}(R_{1}+R_{2}+R_{5}+R_{7})$, $\frac{1}{2}(R_{1}+R_{7}+R_{5}+R_{28})$.\\ 
\textbf{(13)} $\mathrm{nd}(Y_{13}) = 10$. Sequence: $\frac{1}{2}(R_{0}+R_{19})$, $\frac{1}{2}(R_{3}+R_{9})$, $\frac{1}{2}(R_{0}+R_{1}+R_{2}+R_{8})$, $\frac{1}{2}(R_{0}+R_{1}+R_{17}+R_{3})$, $\frac{1}{2}(R_{0}+R_{1}+R_{17}+R_{9})$, $\frac{1}{2}(R_{0}+R_{1}+R_{18}+R_{8})$, $\frac{1}{2}(R_{0}+R_{6}+R_{17}+R_{8})$, $\frac{1}{2}(R_{0}+R_{7}+R_{17}+R_{8})$, $\frac{1}{2}(R_{1}+R_{10}+R_{8}+R_{17})$, $\frac{1}{2}(R_{1}+R_{16}+R_{8}+R_{17})$.\\ 
\textbf{(14)} $\mathrm{nd}(Y_{14}) = 10$. Sequence: $\frac{1}{2}(R_{0}+R_{9})$, $\frac{1}{2}(R_{0}+R_{30})$, $\frac{1}{2}(R_{1}+R_{8})$, $\frac{1}{2}(R_{2}+R_{25})$, $\frac{1}{2}(R_{3}+R_{10})$, $\frac{1}{2}(R_{10}+R_{24})$, $\frac{1}{2}(R_{11}+R_{25})$, $\frac{1}{2}(R_{12}+R_{25})$, $\frac{1}{2}(R_{13}+R_{25})$, $\frac{1}{2}(R_{25}+R_{31})$.\\ 
\textbf{(15)} $\mathrm{nd}(Y_{15}) = 10$. Sequence: $\frac{1}{2}(R_{0}+R_{12})$, $\frac{1}{2}(R_{0}+R_{16})$, $\frac{1}{2}(R_{0}+R_{65})$, $\frac{1}{2}(R_{1}+R_{15})$, $\frac{1}{2}(R_{2}+R_{86})$, $\frac{1}{2}(R_{8}+R_{47})$, $\frac{1}{2}(R_{0}+R_{66})$, $\frac{1}{2}(R_{2}+R_{19})$, $\frac{1}{2}(R_{2}+R_{87})$, $(R_{0}+R_{15})$.\\ 
\textbf{(16)} $\mathrm{nd}(Y_{16}) = 10$. Sequence: $\frac{1}{2}(R_{0}+R_{12})$, $\frac{1}{2}(R_{10}+R_{21})$, $\frac{1}{2}(R_{12}+R_{18})$, $\frac{1}{2}(R_{1}+R_{20})$, $\frac{1}{2}(R_{2}+R_{10})$, $\frac{1}{2}(R_{2}+R_{21})$, $\frac{1}{2}(R_{3}+R_{15})$, $\frac{1}{2}(R_{11}+R_{20})$, $\frac{1}{2}(R_{12}+R_{50})$, $\frac{1}{2}(R_{13}+R_{15})$.\\ 
\textbf{(17)} $\mathrm{nd}(Y_{17}) = 10$. Sequence: $\frac{1}{2}(R_{0}+R_{18})$, $\frac{1}{2}(R_{0}+R_{24})$, $\frac{1}{2}(R_{0}+R_{25})$, $\frac{1}{2}(R_{1}+R_{17})$, $\frac{1}{2}(R_{3}+R_{19})$, $\frac{1}{2}(R_{17}+R_{20})$, $\frac{1}{2}(R_{17}+R_{27})$, $\frac{1}{2}(R_{19}+R_{21})$, $\frac{1}{2}(R_{19}+R_{23})$, $\frac{1}{2}(R_{0}+R_{17}+R_{19})$.\\ 
\textbf{(18)} $\mathrm{nd}(Y_{18}) = 10$. Sequence: $\frac{1}{2}(R_{0}+R_{17})$, $\frac{1}{2}(R_{2}+R_{18})$, $\frac{1}{2}(R_{17}+R_{25})$, $\frac{1}{2}(R_{4}+R_{11})$, $\frac{1}{2}(R_{8}+R_{10})$, $\frac{1}{2}(R_{8}+R_{12})$, $\frac{1}{2}(R_{1}+R_{3}+R_{17})$, $\frac{1}{2}(R_{1}+R_{5}+R_{8})$, $\frac{1}{2}(R_{3}+R_{4}+R_{5})$, $\frac{1}{2}(R_{4}+R_{8}+R_{17})$.\\ 
\textbf{(19)} $\mathrm{nd}(Y_{19}) = 10$. Sequence: $\frac{1}{2}(R_{0}+R_{17})$, $\frac{1}{2}(R_{0}+R_{20})$, $\frac{1}{2}(R_{0}+R_{22})$, $\frac{1}{2}(R_{1}+R_{18})$, $\frac{1}{2}(R_{3}+R_{6})$, $\frac{1}{2}(R_{5}+R_{15})$, $\frac{1}{2}(R_{1}+R_{19})$, $\frac{1}{2}(R_{3}+R_{25})$, $\frac{1}{2}(R_{7}+R_{27})$, $\frac{1}{2}(R_{1}+R_{3}+R_{7})$.\\ 
\textbf{(20)} $\mathrm{nd}(Y_{20}) = 10$. Sequence: $\frac{1}{2}(R_{2}+R_{27})$, $\frac{1}{2}(R_{3}+R_{19})$, $\frac{1}{2}(R_{6}+R_{20})$, $\frac{1}{2}(R_{10}+R_{14})$, $\frac{1}{2}(R_{14}+R_{21})$, $\frac{1}{2}(R_{19}+R_{22})$, $\frac{1}{2}(R_{4}+R_{23})$, $\frac{1}{2}(R_{7}+R_{11})$, $\frac{1}{2}(R_{2}+R_{11}+R_{14})$, $\frac{1}{2}(R_{1}+R_{5}+R_{14}+R_{9})$.\\ 
\textbf{(21)} $\mathrm{nd}(Y_{21}) = 10$. Sequence: $\frac{1}{2}(R_{0}+R_{17})$, $\frac{1}{2}(R_{0}+R_{26})$, $\frac{1}{2}(R_{4}+R_{27})$, $\frac{1}{2}(R_{6}+R_{19})$, $\frac{1}{2}(R_{0}+R_{9}+R_{4}+R_{12})$, $\frac{1}{2}(R_{0}+R_{10}+R_{1}+R_{12})$, $\frac{1}{2}(R_{0}+R_{11}+R_{2}+R_{12})$, $\frac{1}{2}(R_{1}+R_{9}+R_{2}+R_{12})$, $\frac{1}{2}(R_{1}+R_{11}+R_{4}+R_{12})$, $\frac{1}{2}(R_{2}+R_{10}+R_{4}+R_{12})$.\\ 
\textbf{(22)} $\mathrm{nd}(Y_{22}) = 10$. Sequence: $\frac{1}{2}(R_{0}+R_{20})$, $\frac{1}{2}(R_{0}+R_{31})$, $\frac{1}{2}(R_{1}+R_{18})$, $\frac{1}{2}(R_{1}+R_{27})$, $\frac{1}{2}(R_{2}+R_{26})$, $\frac{1}{2}(R_{20}+R_{31})$, $\frac{1}{2}(R_{21}+R_{24})$, $\frac{1}{2}(R_{3}+R_{32})$, $\frac{1}{2}(R_{4}+R_{24})$, $\frac{1}{2}(R_{1}+R_{3}+R_{24})$.\\ 
\textbf{(23)} $\mathrm{nd}(Y_{23}) = 10$. Sequence: $\frac{1}{2}(R_{12}+R_{15})$, $\frac{1}{2}(R_{0}+R_{3}+R_{6})$, $\frac{1}{2}(R_{0}+R_{4}+R_{13})$, $\frac{1}{2}(R_{0}+R_{5}+R_{14})$, $\frac{1}{2}(R_{0}+R_{9}+R_{11})$, $\frac{1}{2}(R_{1}+R_{5}+R_{9})$, $\frac{1}{2}(R_{2}+R_{4}+R_{14})$, $\frac{1}{2}(R_{2}+R_{5}+R_{13})$, $\frac{1}{2}(R_{0}+R_{3}+R_{12}+R_{9})$, $\frac{1}{2}(R_{1}+R_{2}+R_{7}+R_{12})$.\\ 
\textbf{(24)} $\mathrm{nd}(Y_{24}) = 10$. Sequence: $\frac{1}{2}(R_{2}+R_{8})$, $\frac{1}{2}(R_{5}+R_{14})$, $\frac{1}{2}(R_{11}+R_{12})$, $\frac{1}{2}(R_{0}+R_{3}+R_{2}+R_{7})$, $\frac{1}{2}(R_{0}+R_{4}+R_{2}+R_{14})$, $\frac{1}{2}(R_{1}+R_{10}+R_{6}+R_{12})$, $\frac{1}{2}(R_{0}+R_{3}+R_{12}+R_{1}+R_{4})$, $\frac{1}{2}(R_{0}+R_{4}+R_{1}+R_{10}+R_{7})$, $\frac{1}{2}(R_{1}+R_{4}+R_{2}+R_{3}+R_{10})$, $\frac{1}{2}(R_{1}+R_{4}+R_{2}+R_{7}+R_{12})$.\\ 
\textbf{(25)} $\mathrm{nd}(Y_{25}) = 10$. Sequence: $\frac{1}{2}(R_{0}+R_{20})$, $\frac{1}{2}(R_{0}+R_{40})$, $\frac{1}{2}(R_{1}+R_{14})$, $\frac{1}{2}(R_{1}+R_{15})$, $\frac{1}{2}(R_{1}+R_{31})$, $\frac{1}{2}(R_{1}+R_{63})$, $\frac{1}{2}(R_{1}+R_{77})$, $\frac{1}{2}(R_{4}+R_{72})$, $\frac{1}{2}(R_{6}+R_{42})$, $(R_{0}+R_{7})$.\\ 
\textbf{(27)} $\mathrm{nd}(Y_{27}) = 10$. Sequence: $\frac{1}{2}(R_{0}+R_{14})$, $\frac{1}{2}(R_{0}+R_{22})$, $\frac{1}{2}(R_{0}+R_{23})$, $\frac{1}{2}(R_{1}+R_{21})$, $\frac{1}{2}(R_{1}+R_{24})$, $\frac{1}{2}(R_{2}+R_{16})$, $\frac{1}{2}(R_{2}+R_{28})$, $\frac{1}{2}(R_{3}+R_{19})$, $\frac{1}{2}(R_{4}+R_{8})$, $\frac{1}{2}(R_{10}+R_{12})$.\\ 
\textbf{(28)} $\mathrm{nd}(Y_{28}) = 10$. Sequence: $\frac{1}{2}(R_{0}+R_{15})$, $\frac{1}{2}(R_{0}+R_{20})$, $\frac{1}{2}(R_{0}+R_{23})$, $\frac{1}{2}(R_{1}+R_{17})$, $\frac{1}{2}(R_{1}+R_{18})$, $\frac{1}{2}(R_{2}+R_{16})$, $\frac{1}{2}(R_{2}+R_{19})$, $\frac{1}{2}(R_{2}+R_{28})$, $\frac{1}{2}(R_{4}+R_{9})$, $\frac{1}{2}(R_{7}+R_{8})$.\\ 
\textbf{(29)} $\mathrm{nd}(Y_{29}) = 10$. Sequence: $\frac{1}{2}(R_{0}+R_{21})$, $\frac{1}{2}(R_{0}+R_{45})$, $\frac{1}{2}(R_{0}+R_{51})$, $\frac{1}{2}(R_{1}+R_{46})$, $\frac{1}{2}(R_{1}+R_{50})$, $\frac{1}{2}(R_{2}+R_{27})$, $\frac{1}{2}(R_{3}+R_{30})$, $\frac{1}{2}(R_{3}+R_{47})$, $\frac{1}{2}(R_{5}+R_{48})$, $\frac{1}{2}(R_{4}+R_{36})$.\\ 
\textbf{(30)} $\mathrm{nd}(Y_{30}) = 10$. Sequence: $\frac{1}{2}(R_{1}+R_{42})$, $\frac{1}{2}(R_{2}+R_{17})$, $\frac{1}{2}(R_{5}+R_{54})$, $\frac{1}{2}(R_{8}+R_{38})$, $\frac{1}{2}(R_{8}+R_{55})$, $\frac{1}{2}(R_{13}+R_{52})$, $\frac{1}{2}(R_{26}+R_{41})$, $\frac{1}{2}(R_{37}+R_{52})$, $\frac{1}{2}(R_{42}+R_{51})$, $\frac{1}{2}(R_{8}+R_{42}+R_{52})$.\\ 
\textbf{(31)} $\mathrm{nd}(Y_{31}) = 10$. Sequence: $\frac{1}{2}(R_{0}+R_{20})$, $\frac{1}{2}(R_{0}+R_{21})$, $\frac{1}{2}(R_{0}+R_{27})$, $\frac{1}{2}(R_{1}+R_{23})$, $\frac{1}{2}(R_{1}+R_{25})$, $\frac{1}{2}(R_{2}+R_{22})$, $\frac{1}{2}(R_{2}+R_{26})$, $\frac{1}{2}(R_{4}+R_{31})$, $\frac{1}{2}(R_{9}+R_{31})$, $\frac{1}{2}(R_{2}+R_{6}+R_{12})$.\\ 
\textbf{(32)} $\mathrm{nd}(Y_{32}) = 10$. Sequence: $\frac{1}{2}(R_{1}+R_{2})$, $\frac{1}{2}(R_{1}+R_{17})$, $\frac{1}{2}(R_{1}+R_{29})$, $\frac{1}{2}(R_{1}+R_{5})$, $\frac{1}{2}(R_{1}+R_{6})$, $\frac{1}{2}(R_{1}+R_{7})$, $\frac{1}{2}(R_{11}+R_{33})$, $\frac{1}{2}(R_{13}+R_{21})$, $\frac{1}{2}(R_{14}+R_{22})$, $\frac{1}{2}(R_{1}+R_{18})$.\\ 
\textbf{(33)} $\mathrm{nd}(Y_{33}) = 10$. Sequence: $\frac{1}{2}(R_{0}+R_{29})$, $\frac{1}{2}(R_{0}+R_{41})$, $\frac{1}{2}(R_{17}+R_{24})$, $\frac{1}{2}(R_{1}+R_{6})$, $\frac{1}{2}(R_{1}+R_{7})$, $\frac{1}{2}(R_{1}+R_{11})$, $\frac{1}{2}(R_{1}+R_{13})$, $\frac{1}{2}(R_{1}+R_{25})$, $\frac{1}{2}(R_{1}+R_{27})$, $\frac{1}{2}(R_{1}+R_{8})$.\\ 
\textbf{(34)} $\mathrm{nd}(Y_{34}) = 10$. Sequence: $\frac{1}{2}(R_{0}+R_{38})$, $\frac{1}{2}(R_{1}+R_{21})$, $\frac{1}{2}(R_{5}+R_{11})$, $\frac{1}{2}(R_{6}+R_{21})$, $\frac{1}{2}(R_{11}+R_{22})$, $\frac{1}{2}(R_{14}+R_{21})$, $\frac{1}{2}(R_{21}+R_{33})$, $\frac{1}{2}(R_{21}+R_{39})$, $\frac{1}{2}(R_{7}+R_{21})$, $\frac{1}{2}(R_{23}+R_{30})$.\\ 
\textbf{(35)} $\mathrm{nd}(Y_{35}) = 10$. Sequence: $\frac{1}{2}(R_{0}+R_{25})$, $\frac{1}{2}(R_{0}+R_{34})$, $\frac{1}{2}(R_{2}+R_{27})$, $\frac{1}{2}(R_{2}+R_{29})$, $\frac{1}{2}(R_{3}+R_{22})$, $\frac{1}{2}(R_{3}+R_{33})$, $\frac{1}{2}(R_{0}+R_{26})$, $\frac{1}{2}(R_{1}+R_{30})$, $\frac{1}{2}(R_{1}+R_{32})$, $\frac{1}{2}(R_{0}+R_{2}+R_{3})$.\\ 
\textbf{(36)} $\mathrm{nd}(Y_{36}) = 10$. Sequence: $(R_{2}+R_{4})$, $\frac{1}{2}(R_{0}+R_{8}+R_{9})$, $\frac{1}{2}(R_{0}+R_{10}+R_{11})$, $\frac{1}{2}(R_{0}+R_{12}+R_{13})$, $\frac{1}{2}(R_{0}+R_{14}+R_{15})$, $\frac{1}{2}(R_{1}+R_{13}+R_{14})$, $\frac{1}{2}(R_{2}+R_{10}+R_{12})$, $\frac{1}{2}(R_{2}+R_{11}+R_{13})$, $\frac{1}{2}(R_{0}+R_{10}+R_{2}+R_{13})$, $\frac{1}{2}(R_{1}+R_{2}+R_{5}+R_{3})$.\\ 
\textbf{(37)} $\mathrm{nd}(Y_{37}) = 10$. Sequence: $\frac{1}{2}(R_{0}+R_{1})$, $\frac{1}{2}(R_{0}+R_{8})$, $\frac{1}{2}(R_{0}+R_{9})$, $\frac{1}{2}(R_{0}+R_{13})$, $\frac{1}{2}(R_{0}+R_{21})$, $\frac{1}{2}(R_{0}+R_{24})$, $(R_{0}+R_{16})$, $\frac{1}{2}(R_{2}+R_{6}+R_{22})$, $\frac{1}{2}(R_{2}+R_{14}+R_{17})$, $\frac{1}{2}(R_{6}+R_{12}+R_{17})$.\\ 
\textbf{(38)} $\mathrm{nd}(Y_{38}) = 10$. Sequence: $\frac{1}{2}(R_{0}+R_{1})$, $\frac{1}{2}(R_{0}+R_{2})$, $\frac{1}{2}(R_{0}+R_{12})$, $\frac{1}{2}(R_{0}+R_{13})$, $\frac{1}{2}(R_{0}+R_{9})$, $\frac{1}{2}(R_{0}+R_{10})$, $\frac{1}{2}(R_{17}+R_{21})$, $\frac{1}{2}(R_{0}+R_{15})$, $\frac{1}{2}(R_{3}+R_{5}+R_{4}+R_{11})$, $\frac{1}{2}(R_{6}+R_{8}+R_{7}+R_{14})$.\\ 
\textbf{(39)} $\mathrm{nd}(Y_{39}) = 10$. Sequence: $\frac{1}{2}(R_{0}+R_{20})$, $\frac{1}{2}(R_{0}+R_{27})$, $\frac{1}{2}(R_{0}+R_{29})$, $\frac{1}{2}(R_{1}+R_{18})$, $\frac{1}{2}(R_{1}+R_{23})$, $\frac{1}{2}(R_{1}+R_{31})$, $\frac{1}{2}(R_{2}+R_{25})$, $\frac{1}{2}(R_{3}+R_{22})$, $\frac{1}{2}(R_{3}+R_{26})$, $\frac{1}{2}(R_{0}+R_{1}+R_{3})$.\\ 
\textbf{(40)} $\mathrm{nd}(Y_{40}) = 10$. Sequence: $\frac{1}{2}(R_{0}+R_{19})$, $\frac{1}{2}(R_{0}+R_{20})$, $\frac{1}{2}(R_{0}+R_{33})$, $\frac{1}{2}(R_{1}+R_{29})$, $\frac{1}{2}(R_{1}+R_{30})$, $\frac{1}{2}(R_{2}+R_{21})$, $\frac{1}{2}(R_{2}+R_{28})$, $\frac{1}{2}(R_{3}+R_{32})$, $\frac{1}{2}(R_{4}+R_{25})$, $\frac{1}{2}(R_{0}+R_{2}+R_{6})$.\\ 
\textbf{(41)} $\mathrm{nd}(Y_{41}) = 10$. Sequence: $(R_{1}+R_{5})$, $(R_{3}+R_{7})$, $\frac{1}{2}(R_{0}+R_{7}+R_{8})$, $\frac{1}{2}(R_{1}+R_{2}+R_{6})$, $\frac{1}{2}(R_{1}+R_{9}+R_{12})$, $\frac{1}{2}(R_{1}+R_{10}+R_{11})$, $\frac{1}{2}(R_{2}+R_{7}+R_{12})$, $\frac{1}{2}(R_{6}+R_{7}+R_{9})$, $\frac{1}{2}(R_{0}+R_{11}+R_{12})$, $\frac{1}{2}(R_{1}+R_{6}+R_{7}+R_{12})$.\\ 
\textbf{(42)} $\mathrm{nd}(Y_{42}) = 10$. Sequence: $\frac{1}{2}(R_{0}+R_{19})$, $\frac{1}{2}(R_{1}+R_{13})$, $(R_{0}+R_{6})$, $\frac{1}{2}(R_{0}+R_{1}+R_{2}+R_{5})$, $\frac{1}{2}(R_{0}+R_{1}+R_{2}+R_{10})$, $\frac{1}{2}(R_{0}+R_{1}+R_{4}+R_{3})$, $\frac{1}{2}(R_{0}+R_{1}+R_{16}+R_{7})$, $\frac{1}{2}(R_{0}+R_{3}+R_{2}+R_{7})$, $\frac{1}{2}(R_{1}+R_{2}+R_{3}+R_{16})$, $\frac{1}{2}(R_{1}+R_{2}+R_{7}+R_{4})$.\\ 
\textbf{(43)} $\mathrm{nd}(Y_{43}) = 10$. Sequence: $\frac{1}{2}(R_{0}+R_{9})$, $\frac{1}{2}(R_{2}+R_{15})$, $\frac{1}{2}(R_{2}+R_{14} \cdot H_{0})$, $\frac{1}{2}(R_{1} \cdot H_{4}+R_{7} \cdot H_{4})$, $\frac{1}{2}(R_{14}+R_{15} \cdot H_{0})$, $\frac{1}{2}(R_{0}+R_{10}+R_{10} \cdot H_{0})$, $\frac{1}{2}(R_{0}+R_{10}+R_{13} \cdot H_{0})$, $\frac{1}{2}(R_{0}+R_{12}+R_{11} \cdot H_{0})$, $\frac{1}{2}(R_{0}+R_{12}+R_{12} \cdot H_{0})$, $\frac{1}{2}(R_{2}+R_{7}+R_{7} \cdot H_{4})$.\\ 
\textbf{(44)} $\mathrm{nd}(Y_{44}) = 10$. Sequence: $\frac{1}{2}(R_{0}+R_{29})$, $\frac{1}{2}(R_{4}+R_{34})$, $\frac{1}{2}(R_{6}+R_{47})$, $\frac{1}{2}(R_{10}+R_{46})$, $\frac{1}{2}(R_{14}+R_{23})$, $\frac{1}{2}(R_{34}+R_{35})$, $\frac{1}{2}(R_{5}+R_{29}+R_{36})$, $\frac{1}{2}(R_{6}+R_{22}+R_{36})$, $\frac{1}{2}(R_{10}+R_{14}+R_{34})$, $\frac{1}{2}(R_{14}+R_{34}+R_{46})$.\\ 
\textbf{(45)} $\mathrm{nd}(Y_{45}) = 10$. Sequence: $\frac{1}{2}(R_{0}+R_{11}+R_{14})$, $\frac{1}{2}(R_{1}+R_{6}+R_{13})$, $\frac{1}{2}(R_{2}+R_{6}+R_{12})$, $\frac{1}{2}(R_{3}+R_{5}+R_{12})$, $\frac{1}{2}(R_{4}+R_{5}+R_{13})$, $\frac{1}{2}(R_{5}+R_{10}+R_{14})$, $\frac{1}{2}(R_{6}+R_{9}+R_{14})$, $\frac{1}{2}(R_{7}+R_{11}+R_{13})$, $\frac{1}{2}(R_{0}+R_{7}+R_{13}+R_{14})$, $\frac{1}{2}(R_{2}+R_{3}+R_{5}+R_{6})$.\\ 
\textbf{(46)} $\mathrm{nd}(Y_{46}) = 10$. Sequence: $\frac{1}{2}(R_{0}+R_{7}+R_{6}+R_{11})$, $\frac{1}{2}(R_{0}+R_{8}+R_{6}+R_{9})$, $\frac{1}{2}(R_{3}+R_{8}+R_{10}+R_{9})$, $\frac{1}{2}(R_{0}+R_{7}+R_{4}+R_{1}+R_{3}+R_{9})$, $\frac{1}{2}(R_{0}+R_{7}+R_{4}+R_{2}+R_{3}+R_{8})$, $\frac{1}{2}(R_{0}+R_{8}+R_{3}+R_{1}+R_{4}+R_{11})$, $\frac{1}{2}(R_{0}+R_{9}+R_{3}+R_{2}+R_{4}+R_{11})$, $\frac{1}{2}(R_{1}+R_{2}+R_{7}+R_{11}+2R_{4})$, $\frac{1}{2}(R_{1}+R_{2}+R_{8}+R_{9}+2R_{3})$, $\frac{1}{2}(R_{7}+R_{8}+R_{9}+R_{11}+2R_{0})$.\\ 
\textbf{(47)} $\mathrm{nd}(Y_{47}) = 10$. Sequence: $\frac{1}{2}(R_{0}+R_{4}+R_{1}+R_{5}+R_{2}+R_{7})$, $\frac{1}{2}(R_{0}+R_{4}+R_{1}+R_{9}+R_{6}+R_{8})$, $\frac{1}{2}(R_{0}+R_{4}+R_{3}+R_{2}+R_{5}+R_{8})$, $\frac{1}{2}(R_{0}+R_{4}+R_{3}+R_{6}+R_{9}+R_{7})$, $\frac{1}{2}(R_{0}+R_{7}+R_{2}+R_{3}+R_{6}+R_{8})$, $\frac{1}{2}(R_{0}+R_{7}+R_{9}+R_{1}+R_{5}+R_{8})$, $\frac{1}{2}(R_{1}+R_{4}+R_{3}+R_{2}+R_{7}+R_{9})$, $\frac{1}{2}(R_{1}+R_{4}+R_{3}+R_{6}+R_{8}+R_{5})$, $\frac{1}{2}(R_{1}+R_{5}+R_{2}+R_{3}+R_{6}+R_{9})$, $\frac{1}{2}(R_{2}+R_{5}+R_{8}+R_{6}+R_{9}+R_{7})$.\\ 
\textbf{(50)} $\mathrm{nd}(Y_{50}) = 10$. Sequence: $\frac{1}{2}(R_{0}+R_{20})$, $\frac{1}{2}(R_{0}+R_{21})$, $\frac{1}{2}(R_{0}+R_{27})$, $\frac{1}{2}(R_{1}+R_{18})$, $\frac{1}{2}(R_{3}+R_{39})$, $\frac{1}{2}(R_{6}+R_{33})$, $\frac{1}{2}(R_{14}+R_{33})$, $\frac{1}{2}(R_{17}+R_{39})$, $\frac{1}{2}(R_{18}+R_{25})$, $\frac{1}{2}(R_{2}+R_{19})$.\\ 
\textbf{(51)} $\mathrm{nd}(Y_{51}) = 10$. Sequence: $\frac{1}{2}(R_{0}+R_{20})$, $\frac{1}{2}(R_{0}+R_{31})$, $\frac{1}{2}(R_{0}+R_{34})$, $\frac{1}{2}(R_{1}+R_{19})$, $\frac{1}{2}(R_{1}+R_{32})$, $\frac{1}{2}(R_{2}+R_{24})$, $\frac{1}{2}(R_{3}+R_{35})$, $\frac{1}{2}(R_{4}+R_{29})$, $\frac{1}{2}(R_{5}+R_{22})$, $\frac{1}{2}(R_{2}+R_{40})$.\\ 
\textbf{(52)} $\mathrm{nd}(Y_{52}) = 10$. Sequence: $\frac{1}{2}(R_{0}+R_{8})$, $\frac{1}{2}(R_{0}+R_{14})$, $\frac{1}{2}(R_{0}+R_{47})$, $\frac{1}{2}(R_{0}+R_{63})$, $\frac{1}{2}(R_{1}+R_{7})$, $\frac{1}{2}(R_{2}+R_{7})$, $\frac{1}{2}(R_{4}+R_{13})$, $\frac{1}{2}(R_{7}+R_{44})$, $\frac{1}{2}(R_{7}+R_{62})$, $\frac{1}{2}(R_{45}+R_{65})$.\\ 
\textbf{(54)} $\mathrm{nd}(Y_{54}) = 10$. Sequence: $\frac{1}{2}(R_{0}+R_{23})$, $\frac{1}{2}(R_{0}+R_{27})$, $\frac{1}{2}(R_{0}+R_{28})$, $\frac{1}{2}(R_{1}+R_{25})$, $\frac{1}{2}(R_{1}+R_{26})$, $\frac{1}{2}(R_{2}+R_{22})$, $\frac{1}{2}(R_{3}+R_{30})$, $\frac{1}{2}(R_{3}+R_{16})$, $\frac{1}{2}(R_{8}+R_{33})$, $\frac{1}{2}(R_{1}+R_{3}+R_{10})$.\\ 
\textbf{(55)} $\mathrm{nd}(Y_{55}) = 10$. Sequence: $\frac{1}{2}(R_{0}+R_{1})$, $\frac{1}{2}(R_{0}+R_{7})$, $\frac{1}{2}(R_{0}+R_{12})$, $\frac{1}{2}(R_{0}+R_{8})$, $\frac{1}{2}(R_{0}+R_{11})$, $\frac{1}{2}(R_{0}+R_{16})$, $\frac{1}{2}(R_{0}+R_{24})$, $\frac{1}{2}(R_{14}+R_{22})$, $\frac{1}{2}(R_{2}+R_{4}+R_{9})$, $\frac{1}{2}(R_{4}+R_{5}+R_{10})$.\\ 
\textbf{(56)} $\mathrm{nd}(Y_{56}) = 10$. Sequence: $\frac{1}{2}(R_{0}+R_{4})$, $\frac{1}{2}(R_{0}+R_{13})$, $\frac{1}{2}(R_{0}+R_{15})$, $\frac{1}{2}(R_{0}+R_{16})$, $\frac{1}{2}(R_{0}+R_{25})$, $\frac{1}{2}(R_{0}+R_{26})$, $\frac{1}{2}(R_{0}+R_{6})$, $\frac{1}{2}(R_{1}+R_{32})$, $\frac{1}{2}(R_{2}+R_{21})$, $\frac{1}{2}(R_{1}+R_{3}+R_{8}+R_{7})$.\\ 
\textbf{(58)} $\mathrm{nd}(Y_{58}) = 10$. Sequence: $\frac{1}{2}(R_{0}+R_{25})$, $\frac{1}{2}(R_{0}+R_{37})$, $\frac{1}{2}(R_{3}+R_{38})$, $\frac{1}{2}(R_{4}+R_{31})$, $\frac{1}{2}(R_{0}+R_{33})$, $\frac{1}{2}(R_{1}+R_{23})$, $\frac{1}{2}(R_{1}+R_{36})$, $\frac{1}{2}(R_{2}+R_{28})$, $\frac{1}{2}(R_{1}+R_{34})$, $\frac{1}{2}(R_{0}+R_{1}+R_{5})$.\\ 
\textbf{(59)} $\mathrm{nd}(Y_{59}) = 10$. Sequence: $\frac{1}{2}(R_{0}+R_{25})$, $\frac{1}{2}(R_{0}+R_{31})$, $\frac{1}{2}(R_{2}+R_{38})$, $\frac{1}{2}(R_{3}+R_{32})$, $\frac{1}{2}(R_{4}+R_{39})$, $\frac{1}{2}(R_{5}+R_{29})$, $\frac{1}{2}(R_{0}+R_{37})$, $\frac{1}{2}(R_{1}+R_{23})$, $\frac{1}{2}(R_{1}+R_{33})$, $\frac{1}{2}(R_{0}+R_{3}+R_{6})$.\\ 
\textbf{(60)} $\mathrm{nd}(Y_{60}) = 10$. Sequence: $(R_{0}+R_{12})$, $(R_{2}+R_{11})$, $\frac{1}{2}(R_{1}+R_{4}+R_{7})$, $\frac{1}{2}(R_{1}+R_{5}+R_{15})$, $\frac{1}{2}(R_{1}+R_{6}+R_{16})$, $\frac{1}{2}(R_{1}+R_{10}+R_{13})$, $\frac{1}{2}(R_{2}+R_{6}+R_{10})$, $\frac{1}{2}(R_{3}+R_{5}+R_{16})$, $\frac{1}{2}(R_{5}+R_{9}+R_{13})$, $\frac{1}{2}(R_{1}+R_{5}+R_{3}+R_{6})$.\\ 
\textbf{(61)} $\mathrm{nd}(Y_{61}) = 10$. Sequence: $\frac{1}{2}(R_{0}+R_{1})$, $\frac{1}{2}(R_{0}+R_{9})$, $\frac{1}{2}(R_{0}+R_{10})$, $\frac{1}{2}(R_{0}+R_{16})$, $\frac{1}{2}(R_{0}+R_{21})$, $\frac{1}{2}(R_{0}+R_{22})$, $(R_{0}+R_{13})$, $\frac{1}{2}(R_{2}+R_{5}+R_{11})$, $\frac{1}{2}(R_{2}+R_{8}+R_{24})$, $\frac{1}{2}(R_{7}+R_{13}+R_{18})$.\\ 
\textbf{(62)} $\mathrm{nd}(Y_{62}) = 10$. Sequence: $\frac{1}{2}(R_{0}+R_{14})$, $\frac{1}{2}(R_{2}+R_{17})$, $\frac{1}{2}(R_{2}+R_{3})$, $\frac{1}{2}(R_{2}+R_{4})$, $\frac{1}{2}(R_{2}+R_{15})$, $\frac{1}{2}(R_{2}+R_{16})$, $\frac{1}{2}(R_{2}+R_{11})$, $\frac{1}{2}(R_{2}+R_{12})$, $\frac{1}{2}(R_{5}+R_{7}+R_{6}+R_{13})$, $\frac{1}{2}(R_{7}+R_{9}+R_{13}+R_{10})$.\\ 
\textbf{(63)} $\mathrm{nd}(Y_{63}) = 10$. Sequence: $\frac{1}{2}(R_{0}+R_{11})$, $\frac{1}{2}(R_{0}+R_{15})$, $\frac{1}{2}(R_{0}+R_{20})$, $\frac{1}{2}(R_{7}+R_{19})$, $\frac{1}{2}(R_{6}+R_{13})$, $\frac{1}{2}(R_{9}+R_{18})$, $(R_{0}+R_{17})$, $\frac{1}{2}(R_{1}+R_{9}+R_{5}+R_{3}+R_{16})$, $\frac{1}{2}(R_{1}+R_{9}+R_{5}+R_{4}+R_{17})$, $\frac{1}{2}(R_{3}+R_{4}+R_{8}+R_{9}+2R_{5})$.\\ 
\textbf{(64)} $\mathrm{nd}(Y_{64}) = 10$. Sequence: $\frac{1}{2}(R_{0}+R_{25})$, $\frac{1}{2}(R_{0}+R_{33})$, $\frac{1}{2}(R_{2}+R_{29})$, $\frac{1}{2}(R_{2}+R_{32})$, $\frac{1}{2}(R_{4}+R_{21})$, $\frac{1}{2}(R_{4}+R_{36})$, $\frac{1}{2}(R_{0}+R_{26})$, $\frac{1}{2}(R_{1}+R_{30})$, $\frac{1}{2}(R_{7}+R_{39})$, $\frac{1}{2}(R_{2}+R_{7}+R_{8})$.\\ 
\textbf{(65)} $\mathrm{nd}(Y_{65}) = 10$. Sequence: $\frac{1}{2}(R_{0}+R_{24})$, $\frac{1}{2}(R_{0}+R_{30})$, $\frac{1}{2}(R_{0}+R_{31})$, $\frac{1}{2}(R_{1}+R_{29})$, $\frac{1}{2}(R_{1}+R_{32})$, $\frac{1}{2}(R_{2}+R_{28})$, $\frac{1}{2}(R_{3}+R_{33})$, $\frac{1}{2}(R_{5}+R_{39})$, $\frac{1}{2}(R_{13}+R_{39})$, $\frac{1}{2}(R_{1}+R_{10}+R_{21})$.\\ 
\textbf{(66)} $\mathrm{nd}(Y_{66}) = 10$. Sequence: $\frac{1}{2}(R_{1}+R_{2})$, $\frac{1}{2}(R_{2}+R_{19})$, $\frac{1}{2}(R_{2}+R_{21})$, $\frac{1}{2}(R_{2}+R_{7})$, $\frac{1}{2}(R_{2}+R_{10})$, $\frac{1}{2}(R_{2}+R_{22})$, $\frac{1}{2}(R_{6}+R_{12}+R_{13})$, $\frac{1}{2}(R_{6}+R_{15}+R_{18})$, $\frac{1}{2}(R_{12}+R_{15}+R_{16})$, $(R_{6}+R_{12}+R_{15})$.\\ 
\textbf{(67)} $\mathrm{nd}(Y_{67}) = 10$. Sequence: $\frac{1}{2}(R_{0}+R_{30})$, $\frac{1}{2}(R_{2}+R_{33})$, $\frac{1}{2}(R_{1}+R_{9})$, $\frac{1}{2}(R_{1}+R_{10})$, $\frac{1}{2}(R_{1}+R_{15})$, $\frac{1}{2}(R_{1}+R_{17})$, $\frac{1}{2}(R_{1}+R_{18})$, $\frac{1}{2}(R_{1}+R_{24})$, $\frac{1}{2}(R_{1}+R_{31})$, $\frac{1}{2}(R_{2}+R_{5}+R_{16}+R_{7})$.\\ 
\textbf{(68)} $\mathrm{nd}(Y_{68}) = 10$. Sequence: $\frac{1}{2}(R_{0}+R_{2})$, $\frac{1}{2}(R_{0}+R_{8})$, $\frac{1}{2}(R_{0}+R_{27})$, $\frac{1}{2}(R_{0}+R_{9})$, $\frac{1}{2}(R_{0}+R_{15})$, $\frac{1}{2}(R_{0}+R_{17})$, $\frac{1}{2}(R_{0}+R_{24})$, $\frac{1}{2}(R_{1}+R_{16})$, $\frac{1}{2}(R_{3}+R_{26})$, $\frac{1}{2}(R_{3}+R_{4}+R_{5}+R_{12})$.\\ 
\textbf{(69)} $\mathrm{nd}(Y_{69}) = 10$. Sequence: $\frac{1}{2}(R_{0}+R_{33})$, $\frac{1}{2}(R_{0}+R_{34})$, $\frac{1}{2}(R_{0}+R_{44})$, $\frac{1}{2}(R_{1}+R_{38})$, $\frac{1}{2}(R_{1}+R_{42})$, $\frac{1}{2}(R_{2}+R_{39})$, $\frac{1}{2}(R_{2}+R_{41})$, $\frac{1}{2}(R_{3}+R_{36})$, $\frac{1}{2}(R_{4}+R_{43})$, $\frac{1}{2}(R_{0}+R_{3}+R_{4})$.\\ 
\textbf{(70)} $\mathrm{nd}(Y_{70}) = 10$. Sequence: $\frac{1}{2}(R_{0}+R_{31})$, $\frac{1}{2}(R_{0}+R_{32})$, $\frac{1}{2}(R_{0}+R_{45})$, $\frac{1}{2}(R_{1}+R_{41})$, $\frac{1}{2}(R_{1}+R_{42})$, $\frac{1}{2}(R_{2}+R_{33})$, $\frac{1}{2}(R_{2}+R_{40})$, $\frac{1}{2}(R_{3}+R_{44})$, $\frac{1}{2}(R_{4}+R_{37})$, $\frac{1}{2}(R_{0}+R_{2}+R_{6})$.\\ 
\textbf{(71)} $\mathrm{nd}(Y_{71}) = 10$. Sequence: $\frac{1}{2}(R_{2}+R_{3})$, $\frac{1}{2}(R_{2}+R_{9})$, $\frac{1}{2}(R_{2}+R_{15})$, $(R_{2}+R_{4})$, $\frac{1}{2}(R_{0}+R_{11}+R_{4}+R_{12})$, $\frac{1}{2}(R_{0}+R_{13}+R_{1}+R_{14})$, $\frac{1}{2}(R_{1}+R_{6}+R_{4}+R_{7})$, $\frac{1}{2}(R_{1}+R_{8}+R_{10}+R_{14})$, $\frac{1}{2}(R_{4}+R_{5}+R_{8}+R_{7})$, $\frac{1}{2}(R_{4}+R_{6}+R_{14}+R_{11})$.\\ 
\textbf{(72)} $\mathrm{nd}(Y_{72}) = 10$. Sequence: $\frac{1}{2}(R_{10}+R_{11})$, $(R_{0}+R_{11})$, $\frac{1}{2}(R_{0}+R_{7}+R_{6}+R_{13})$, $\frac{1}{2}(R_{0}+R_{8}+R_{6}+R_{9})$, $\frac{1}{2}(R_{0}+R_{7}+R_{4}+R_{1}+R_{3}+R_{9})$, $\frac{1}{2}(R_{0}+R_{7}+R_{5}+R_{1}+R_{12}+R_{9})$, $\frac{1}{2}(R_{1}+R_{3}+R_{9}+R_{6}+R_{7}+R_{5})$, $\frac{1}{2}(R_{1}+R_{4}+R_{7}+R_{6}+R_{9}+R_{12})$, $\frac{1}{2}(R_{0}+R_{3}+R_{6}+R_{12}+2R_{9})$, $\frac{1}{2}(R_{0}+R_{4}+R_{5}+R_{6}+2R_{7})$.\\ 
\textbf{(73)} $\mathrm{nd}(Y_{73}) = 10$. Sequence: $\frac{1}{2}(R_{1}+R_{2})$, $\frac{1}{2}(R_{1}+R_{3})$, $\frac{1}{2}(R_{1}+R_{16})$, $\frac{1}{2}(R_{1}+R_{17})$, $\frac{1}{2}(R_{1}+R_{13})$, $\frac{1}{2}(R_{1}+R_{14})$, $\frac{1}{2}(R_{1}+R_{19})$, $\frac{1}{2}(R_{0}+R_{10}+R_{9}+R_{18})$, $\frac{1}{2}(R_{0}+R_{11}+R_{9}+R_{12})$, $\frac{1}{2}(R_{6}+R_{11}+R_{15}+R_{12})$.\\ 
\textbf{(74)} $\mathrm{nd}(Y_{74}) = 10$. Sequence: $(R_{3}+R_{6})$, $\frac{1}{2}(R_{0}+R_{4}+R_{1}+R_{5}+R_{2}+R_{8})$, $\frac{1}{2}(R_{0}+R_{4}+R_{1}+R_{10}+R_{7}+R_{9})$, $\frac{1}{2}(R_{0}+R_{4}+R_{3}+R_{2}+R_{5}+R_{9})$, $\frac{1}{2}(R_{0}+R_{4}+R_{3}+R_{7}+R_{10}+R_{8})$, $\frac{1}{2}(R_{0}+R_{8}+R_{2}+R_{3}+R_{7}+R_{9})$, $\frac{1}{2}(R_{1}+R_{4}+R_{3}+R_{2}+R_{8}+R_{10})$, $\frac{1}{2}(R_{1}+R_{4}+R_{3}+R_{7}+R_{9}+R_{5})$, $\frac{1}{2}(R_{1}+R_{5}+R_{2}+R_{3}+R_{7}+R_{10})$, $\frac{1}{2}(R_{2}+R_{5}+R_{9}+R_{7}+R_{10}+R_{8})$.\\ 
\textbf{(75)} $\mathrm{nd}(Y_{75}) = 10$. Sequence: $\frac{1}{2}(R_{0}+R_{28})$, $\frac{1}{2}(R_{0}+R_{29})$, $\frac{1}{2}(R_{0}+R_{42})$, $\frac{1}{2}(R_{1}+R_{38})$, $\frac{1}{2}(R_{1}+R_{39})$, $\frac{1}{2}(R_{2}+R_{30})$, $\frac{1}{2}(R_{2}+R_{37})$, $\frac{1}{2}(R_{3}+R_{41})$, $\frac{1}{2}(R_{4}+R_{34})$, $\frac{1}{2}(R_{0}+R_{2}+R_{6})$.\\ 
\textbf{(76)} $\mathrm{nd}(Y_{76}) = 10$. Sequence: $(R_{0}+R_{12})$, $(R_{1}+R_{14})$, $(R_{5}+R_{10})$, $\frac{1}{2}(R_{0}+R_{5}+R_{14})$, $\frac{1}{2}(R_{1}+R_{4}+R_{8})$, $\frac{1}{2}(R_{1}+R_{6}+R_{16})$, $\frac{1}{2}(R_{1}+R_{7}+R_{17})$, $\frac{1}{2}(R_{1}+R_{11}+R_{13})$, $\frac{1}{2}(R_{4}+R_{12}+R_{17})$, $\frac{1}{2}(R_{6}+R_{10}+R_{13})$.\\ 
\textbf{(77)} $\mathrm{nd}(Y_{77}) = 10$. Sequence: $(R_{0}+R_{13})$, $(R_{5}+R_{11})$, $(R_{6}+R_{12})$, $\frac{1}{2}(R_{0}+R_{5}+R_{12})$, $\frac{1}{2}(R_{1}+R_{6}+R_{16})$, $\frac{1}{2}(R_{1}+R_{11}+R_{14})$, $\frac{1}{2}(R_{2}+R_{7}+R_{11})$, $\frac{1}{2}(R_{3}+R_{6}+R_{17})$, $\frac{1}{2}(R_{4}+R_{13}+R_{17})$, $\frac{1}{2}(R_{6}+R_{10}+R_{14})$.\\ 
\textbf{(78)} $\mathrm{nd}(Y_{78}) = 10$. Sequence: $\frac{1}{2}(R_{0}+R_{12})$, $\frac{1}{2}(R_{8}+R_{9} \cdot H_{0})$, $\frac{1}{2}(R_{8}+R_{10} \cdot H_{0})$, $\frac{1}{2}(R_{9}+R_{11} \cdot H_{0})$, $\frac{1}{2}(R_{10}+R_{11} \cdot H_{0})$, $\frac{1}{2}(R_{2} \cdot H_{6}+R_{3} \cdot H_{6} \cdot H_{0})$, $(R_{1}+R_{11} \cdot H_{0})$, $(R_{4}+R_{8})$, $\frac{1}{2}(R_{0}+R_{8}+R_{11} \cdot H_{0})$, $\frac{1}{2}(R_{1}+R_{2}+R_{3}+R_{4}+2R_{5})$.\\ 
\textbf{(79)} $\mathrm{nd}(Y_{79}) = 10$. Sequence: $(R_{0}+R_{14})$, $(R_{1}+R_{13})$, $\frac{1}{2}(R_{0}+R_{10}+R_{11})$, $\frac{1}{2}(R_{0}+R_{12}+R_{13})$, $\frac{1}{2}(R_{1}+R_{4}+R_{15})$, $\frac{1}{2}(R_{2}+R_{9}+R_{14})$, $\frac{1}{2}(R_{1}+R_{8}+R_{9})$, $\frac{1}{2}(R_{5}+R_{8}+R_{14})$, $\frac{1}{2}(R_{6}+R_{7}+R_{14})$, $\frac{1}{2}(R_{1}+R_{5}+R_{14}+R_{9})$.\\ 
\textbf{(80)} $\mathrm{nd}(Y_{80}) = 10$. Sequence: $\frac{1}{2}(R_{1}+R_{16})$, $\frac{1}{2}(R_{6}+R_{13})$, $\frac{1}{2}(R_{9}+R_{17})$, $(R_{3}+R_{18})$, $(R_{6}+R_{12})$, $\frac{1}{2}(R_{7}+R_{9}+R_{18})$, $\frac{1}{2}(R_{7}+R_{17}+R_{18})$, $\frac{1}{2}(R_{1}+R_{3}+R_{4}+R_{12})$, $\frac{1}{2}(R_{1}+R_{3}+R_{5}+R_{8})$, $\frac{1}{2}(R_{2}+R_{3}+R_{5}+R_{12})$.\\ 
\textbf{(81)} $\mathrm{nd}(Y_{81}) = 10$. Sequence: $\frac{1}{2}(R_{0}+R_{22})$, $\frac{1}{2}(R_{1}+R_{23})$, $(R_{0}+R_{21})$, $\frac{1}{2}(R_{0}+R_{13}+R_{2}+R_{15})$, $\frac{1}{2}(R_{0}+R_{13}+R_{2}+R_{18})$, $\frac{1}{2}(R_{0}+R_{13}+R_{3}+R_{14})$, $\frac{1}{2}(R_{0}+R_{13}+R_{5}+R_{16})$, $\frac{1}{2}(R_{0}+R_{14}+R_{2}+R_{16})$, $\frac{1}{2}(R_{2}+R_{13}+R_{3}+R_{16})$, $\frac{1}{2}(R_{2}+R_{13}+R_{5}+R_{14})$.\\ 
\textbf{(82)} $\mathrm{nd}(Y_{82}) = 10$. Sequence: $(R_{0}+R_{3}+R_{9})$, $(R_{2}+R_{6}+R_{7})$, $\frac{1}{2}(R_{0}+R_{9}+R_{1}+R_{10})$, $\frac{1}{2}(R_{2}+R_{7}+R_{10}+R_{8})$, $\frac{1}{2}(R_{0}+R_{3}+R_{6}+R_{2}+R_{8})$, $\frac{1}{2}(R_{0}+R_{3}+R_{6}+R_{7}+R_{10})$, $\frac{1}{2}(R_{0}+R_{8}+R_{5}+R_{4}+R_{9})$, $\frac{1}{2}(R_{1}+R_{4}+R_{5}+R_{2}+R_{7})$, $\frac{1}{2}(R_{1}+R_{7}+R_{6}+R_{3}+R_{9})$, $\frac{1}{2}(R_{2}+R_{5}+R_{4}+R_{9}+R_{3}+R_{6})$.\\ 
\textbf{(83)} $\mathrm{nd}(Y_{83}) = 10$. Sequence: $(R_{0}+R_{10}+R_{11})$, $\frac{1}{2}(R_{0}+R_{10}+R_{2}+R_{12})$, $\frac{1}{2}(R_{0}+R_{10}+R_{5}+R_{13})$, $\frac{1}{2}(R_{0}+R_{10}+R_{8}+R_{14})$, $\frac{1}{2}(R_{0}+R_{11}+R_{3}+R_{12})$, $\frac{1}{2}(R_{0}+R_{11}+R_{4}+R_{13})$, $\frac{1}{2}(R_{0}+R_{11}+R_{7}+R_{14})$, $\frac{1}{2}(R_{1}+R_{6}+R_{13}+R_{14})$, $\frac{1}{2}(R_{1}+R_{9}+R_{12}+R_{14})$, $\frac{1}{2}(R_{6}+R_{9}+R_{12}+R_{13})$.\\ 
\textbf{(84)} $\mathrm{nd}(Y_{84}) \geq 9$. Sequence: $\frac{1}{2}(R_{0}+R_{0} \cdot H_{5})$, $\frac{1}{2}(R_{1}+R_{5} \cdot H_{2}+R_{6}+R_{1} \cdot H_{4})$, $(R_{1}+R_{5} \cdot H_{2}+R_{5}+R_{1} \cdot H_{4})$, $(R_{2}+R_{3}+R_{4}+R_{7})$, $\frac{1}{2}(R_{0}+R_{3}+R_{2}+R_{7}+R_{1}+R_{1} \cdot H_{4}+R_{5})$, $\frac{1}{2}(R_{0}+R_{3}+R_{4}+R_{7}+R_{1}+R_{5} \cdot H_{2}+R_{5})$, $\frac{1}{2}(R_{1}+R_{7}+R_{2}+R_{3}+R_{8}+R_{5}+R_{5} \cdot H_{2})$, $\frac{1}{2}(R_{1}+R_{7}+R_{4}+R_{3}+R_{8}+R_{5}+R_{1} \cdot H_{4})$, $\frac{1}{2}(R_{0}+R_{8}+R_{5} \cdot H_{2}+R_{1} \cdot H_{4}+2R_{5})$.\\ 
\textbf{(85)} $\mathrm{nd}(Y_{85}) \geq 7$. Sequence: $(R_{0}+R_{3}+R_{4}+R_{1}+R_{8}+R_{5})$, $\frac{1}{2}(R_{0}+R_{3}+R_{4}+R_{1}+R_{8}+R_{6}+R_{2}+R_{9})$, $\frac{1}{2}(R_{0}+R_{3}+R_{4}+R_{7}+R_{2}+R_{6}+R_{8}+R_{5})$, $\frac{1}{2}(R_{0}+R_{5}+R_{8}+R_{1}+R_{4}+R_{7}+R_{2}+R_{9})$, $\frac{1}{2}(R_{1}+R_{6}+R_{3}+R_{9}+2R_{8}+2R_{5}+2R_{0})$, $\frac{1}{2}(R_{1}+R_{7}+R_{5}+R_{9}+2R_{4}+2R_{3}+2R_{0})$, $\frac{1}{2}(R_{3}+R_{7}+R_{5}+R_{6}+2R_{4}+2R_{1}+2R_{8})$.\\ 
\textbf{(91)} $\mathrm{nd}(Y_{91}) = 10$. Sequence: $\frac{1}{2}(R_{7}+R_{12})$, $\frac{1}{2}(R_{0}+R_{7})$, $\frac{1}{2}(R_{1}+R_{7})$, $\frac{1}{2}(R_{7}+R_{8})$, $\frac{1}{2}(R_{7}+R_{22})$, $\frac{1}{2}(R_{7}+R_{30})$, $\frac{1}{2}(R_{7}+R_{10})$, $\frac{1}{2}(R_{15}+R_{23})$, $\frac{1}{2}(R_{16}+R_{24})$, $\frac{1}{2}(R_{6}+R_{9}+R_{13})$.\\ 
\textbf{(95)} $\mathrm{nd}(Y_{95}) = 10$. Sequence: $\frac{1}{2}(R_{0}+R_{26})$, $\frac{1}{2}(R_{1}+R_{24})$, $\frac{1}{2}(R_{2}+R_{30})$, $\frac{1}{2}(R_{3}+R_{32})$, $\frac{1}{2}(R_{0}+R_{37})$, $\frac{1}{2}(R_{4}+R_{35})$, $\frac{1}{2}(R_{0}+R_{39})$, $\frac{1}{2}(R_{1}+R_{41})$, $\frac{1}{2}(R_{2}+R_{43})$, $\frac{1}{2}(R_{0}+R_{3}+R_{4})$.\\ 
\textbf{(96)} $\mathrm{nd}(Y_{96}) = 10$. Sequence: $\frac{1}{2}(R_{1}+R_{2})$, $\frac{1}{2}(R_{2}+R_{10})$, $\frac{1}{2}(R_{2}+R_{16})$, $\frac{1}{2}(R_{2}+R_{20})$, $\frac{1}{2}(R_{2}+R_{22})$, $\frac{1}{2}(R_{2}+R_{23})$, $(R_{2}+R_{19})$, $\frac{1}{2}(R_{3}+R_{6}+R_{11})$, $\frac{1}{2}(R_{3}+R_{8}+R_{25})$, $\frac{1}{2}(R_{5}+R_{8}+R_{24})$.\\ 
\textbf{(97)} $\mathrm{nd}(Y_{97}) = 10$. Sequence: $\frac{1}{2}(R_{1}+R_{4})$, $\frac{1}{2}(R_{1}+R_{5})$, $\frac{1}{2}(R_{1}+R_{19})$, $\frac{1}{2}(R_{2}+R_{12})$, $\frac{1}{2}(R_{2}+R_{16})$, $\frac{1}{2}(R_{1}+R_{24})$, $\frac{1}{2}(R_{14}+R_{15})$, $\frac{1}{2}(R_{0}+R_{9}+R_{8}+R_{23})$, $\frac{1}{2}(R_{0}+R_{10}+R_{8}+R_{11})$, $\frac{1}{2}(R_{7}+R_{27}+R_{18}+R_{28})$.\\ 
\textbf{(98)} $\mathrm{nd}(Y_{98}) = 10$. Sequence: $\frac{1}{2}(R_{0}+R_{29})$, $\frac{1}{2}(R_{0}+R_{34})$, $\frac{1}{2}(R_{1}+R_{31})$, $\frac{1}{2}(R_{3}+R_{25})$, $\frac{1}{2}(R_{4}+R_{16})$, $\frac{1}{2}(R_{5}+R_{15})$, $\frac{1}{2}(R_{0}+R_{32})$, $\frac{1}{2}(R_{4}+R_{39})$, $\frac{1}{2}(R_{8}+R_{43})$, $\frac{1}{2}(R_{0}+R_{5}+R_{10})$.\\ 
\textbf{(99)} $\mathrm{nd}(Y_{99}) = 10$. Sequence: $\frac{1}{2}(R_{1}+R_{2})$, $\frac{1}{2}(R_{1}+R_{8})$, $\frac{1}{2}(R_{1}+R_{13})$, $\frac{1}{2}(R_{1}+R_{9})$, $\frac{1}{2}(R_{1}+R_{19})$, $\frac{1}{2}(R_{1}+R_{20})$, $\frac{1}{2}(R_{3}+R_{5}+R_{10})$, $\frac{1}{2}(R_{3}+R_{7}+R_{22})$, $\frac{1}{2}(R_{7}+R_{10}+R_{14})$, $(R_{3}+R_{7}+R_{10})$.\\ 
\textbf{(100)} $\mathrm{nd}(Y_{100}) = 10$. Sequence: $\frac{1}{2}(R_{0}+R_{42})$, $\frac{1}{2}(R_{1}+R_{10})$, $\frac{1}{2}(R_{1}+R_{11})$, $\frac{1}{2}(R_{1}+R_{15})$, $\frac{1}{2}(R_{1}+R_{37})$, $\frac{1}{2}(R_{2}+R_{8})$, $\frac{1}{2}(R_{5}+R_{8})$, $\frac{1}{2}(R_{8}+R_{9})$, $\frac{1}{2}(R_{8}+R_{34})$, $\frac{1}{2}(R_{17}+R_{35})$.\\ 
\textbf{(101)} $\mathrm{nd}(Y_{101}) = 10$. Sequence: $\frac{1}{2}(R_{0}+R_{35})$, $\frac{1}{2}(R_{0}+R_{36})$, $\frac{1}{2}(R_{0}+R_{48})$, $\frac{1}{2}(R_{1}+R_{44})$, $\frac{1}{2}(R_{1}+R_{47})$, $\frac{1}{2}(R_{2}+R_{37})$, $\frac{1}{2}(R_{2}+R_{43})$, $\frac{1}{2}(R_{3}+R_{49})$, $\frac{1}{2}(R_{5}+R_{33})$, $\frac{1}{2}(R_{0}+R_{2}+R_{6})$.\\ 
\textbf{(102)} $\mathrm{nd}(Y_{102}) = 10$. Sequence: $\frac{1}{2}(R_{0}+R_{10})$, $\frac{1}{2}(R_{0}+R_{18})$, $\frac{1}{2}(R_{0}+R_{19})$, $\frac{1}{2}(R_{0}+R_{32})$, $\frac{1}{2}(R_{1}+R_{14})$, $\frac{1}{2}(R_{5}+R_{14})$, $\frac{1}{2}(R_{6}+R_{14})$, $\frac{1}{2}(R_{14}+R_{30})$, $\frac{1}{2}(R_{2}+R_{22})$, $\frac{1}{2}(R_{4}+R_{38})$.\\ 
\textbf{(105)} $\mathrm{nd}(Y_{105}) = 10$. Sequence: $\frac{1}{2}(R_{2}+R_{5})$, $\frac{1}{2}(R_{2}+R_{19})$, $\frac{1}{2}(R_{3}+R_{6})$, $\frac{1}{2}(R_{3}+R_{18})$, $\frac{1}{2}(R_{2}+R_{12})$, $(R_{2}+R_{13})$, $\frac{1}{2}(R_{0}+R_{13}+R_{7}+R_{15})$, $\frac{1}{2}(R_{1}+R_{8}+R_{15}+R_{16})$, $\frac{1}{2}(R_{7}+R_{8}+R_{17}+R_{13})$, $\frac{1}{2}(R_{8}+R_{10}+R_{14}+R_{15})$.\\ 
\textbf{(106)} $\mathrm{nd}(Y_{106}) = 10$. Sequence: $\frac{1}{2}(R_{0}+R_{17})$, $\frac{1}{2}(R_{2}+R_{3})$, $\frac{1}{2}(R_{2}+R_{14})$, $\frac{1}{2}(R_{2}+R_{15})$, $\frac{1}{2}(R_{2}+R_{21})$, $\frac{1}{2}(R_{2}+R_{4})$, $\frac{1}{2}(R_{2}+R_{18})$, $\frac{1}{2}(R_{2}+R_{19})$, $\frac{1}{2}(R_{1}+R_{11}+R_{10}+R_{20})$, $\frac{1}{2}(R_{7}+R_{12}+R_{16}+R_{13})$.\\ 
\textbf{(107)} $\mathrm{nd}(Y_{107}) = 10$. Sequence: $\frac{1}{2}(R_{10}+R_{13})$, $(R_{4}+R_{12})$, $\frac{1}{2}(R_{0}+R_{8}+R_{6}+R_{9})$, $\frac{1}{2}(R_{3}+R_{8}+R_{14}+R_{9})$, $\frac{1}{2}(R_{0}+R_{7}+R_{4}+R_{1}+R_{14}+R_{8})$, $\frac{1}{2}(R_{0}+R_{7}+R_{5}+R_{1}+R_{3}+R_{8})$, $\frac{1}{2}(R_{1}+R_{3}+R_{8}+R_{6}+R_{7}+R_{4})$, $\frac{1}{2}(R_{1}+R_{5}+R_{7}+R_{6}+R_{8}+R_{14})$, $\frac{1}{2}(R_{0}+R_{4}+R_{5}+R_{6}+2R_{7})$, $\frac{1}{2}(R_{3}+R_{4}+R_{5}+R_{14}+2R_{1})$.\\ 
\textbf{(109)} $\mathrm{nd}(Y_{109}) = 10$. Sequence: $(R_{2}+R_{6})$, $(R_{5}+R_{8})$, $\frac{1}{2}(R_{0}+R_{4}+R_{1}+R_{5}+R_{2}+R_{9})$, $\frac{1}{2}(R_{0}+R_{4}+R_{3}+R_{2}+R_{5}+R_{10})$, $\frac{1}{2}(R_{0}+R_{9}+R_{2}+R_{3}+R_{7}+R_{10})$, $\frac{1}{2}(R_{0}+R_{9}+R_{11}+R_{1}+R_{5}+R_{10})$, $\frac{1}{2}(R_{1}+R_{4}+R_{3}+R_{2}+R_{9}+R_{11})$, $\frac{1}{2}(R_{1}+R_{4}+R_{3}+R_{7}+R_{10}+R_{5})$, $\frac{1}{2}(R_{1}+R_{5}+R_{2}+R_{3}+R_{7}+R_{11})$, $\frac{1}{2}(R_{2}+R_{5}+R_{10}+R_{7}+R_{11}+R_{9})$.\\ 
\textbf{(110)} $\mathrm{nd}(Y_{110}) = 10$. Sequence: $\frac{1}{2}(R_{0}+R_{31})$, $\frac{1}{2}(R_{0}+R_{40})$, $\frac{1}{2}(R_{0}+R_{42})$, $\frac{1}{2}(R_{1}+R_{29})$, $\frac{1}{2}(R_{1}+R_{35})$, $\frac{1}{2}(R_{1}+R_{43})$, $\frac{1}{2}(R_{2}+R_{39})$, $\frac{1}{2}(R_{3}+R_{34})$, $\frac{1}{2}(R_{3}+R_{36})$, $\frac{1}{2}(R_{0}+R_{1}+R_{3})$.\\ 
\textbf{(111)} $\mathrm{nd}(Y_{111}) = 10$. Sequence: $(R_{2}+R_{8})$, $(R_{5}+R_{16})$, $(R_{7}+R_{18})$, $\frac{1}{2}(R_{0}+R_{15}+R_{16})$, $\frac{1}{2}(R_{0}+R_{17}+R_{18})$, $\frac{1}{2}(R_{1}+R_{15}+R_{18})$, $\frac{1}{2}(R_{1}+R_{16}+R_{17})$, $\frac{1}{2}(R_{3}+R_{14}+R_{16})$, $\frac{1}{2}(R_{9}+R_{13}+R_{18})$, $\frac{1}{2}(R_{0}+R_{13}+R_{14})$.\\ 
\textbf{(112)} $\mathrm{nd}(Y_{112}) = 10$. Sequence: $(R_{0}+R_{16})$, $(R_{5}+R_{15})$, $(R_{6}+R_{14})$, $\frac{1}{2}(R_{0}+R_{14}+R_{15})$, $\frac{1}{2}(R_{1}+R_{5}+R_{17})$, $\frac{1}{2}(R_{1}+R_{6}+R_{18})$, $\frac{1}{2}(R_{2}+R_{6}+R_{10})$, $\frac{1}{2}(R_{3}+R_{5}+R_{18})$, $\frac{1}{2}(R_{3}+R_{6}+R_{17})$, $\frac{1}{2}(R_{5}+R_{9}+R_{13})$.\\ 
\textbf{(113)} $\mathrm{nd}(Y_{113}) = 10$. Sequence: $\frac{1}{2}(R_{0}+R_{23})$, $\frac{1}{2}(R_{1}+R_{23})$, $\frac{1}{2}(R_{9}+R_{23})$, $\frac{1}{2}(R_{10}+R_{23})$, $\frac{1}{2}(R_{21}+R_{23})$, $\frac{1}{2}(R_{23}+R_{24})$, $(R_{2}+R_{23})$, $\frac{1}{2}(R_{2}+R_{5}+R_{11})$, $\frac{1}{2}(R_{2}+R_{7}+R_{25})$, $\frac{1}{2}(R_{2}+R_{8}+R_{26})$.\\ 
\textbf{(114)} $\mathrm{nd}(Y_{114}) = 10$. Sequence: $\frac{1}{2}(R_{2}+R_{3})$, $\frac{1}{2}(R_{2}+R_{6})$, $\frac{1}{2}(R_{2}+R_{7})$, $\frac{1}{2}(R_{2}+R_{23})$, $\frac{1}{2}(R_{2}+R_{24})$, $\frac{1}{2}(R_{2}+R_{27})$, $(R_{2}+R_{17})$, $\frac{1}{2}(R_{0}+R_{17}+R_{18})$, $\frac{1}{2}(R_{1}+R_{20}+R_{21})$, $\frac{1}{2}(R_{4}+R_{17}+R_{19})$.\\ 
\textbf{(115)} $\mathrm{nd}(Y_{115}) = 10$. Sequence: $\frac{1}{2}(R_{1}+R_{7})$, $\frac{1}{2}(R_{12}+R_{15})$, $\frac{1}{2}(R_{13}+R_{14})$, $\frac{1}{2}(R_{2}+R_{3})$, $\frac{1}{2}(R_{2}+R_{11})$, $\frac{1}{2}(R_{2}+R_{21})$, $\frac{1}{2}(R_{2}+R_{4})$, $\frac{1}{2}(R_{2}+R_{18})$, $\frac{1}{2}(R_{2}+R_{16})$, $\frac{1}{2}(R_{2}+R_{19})$.\\ 
\textbf{(116)} $\mathrm{nd}(Y_{116}) = 10$. Sequence: $(R_{0}+R_{16})$, $(R_{2}+R_{11})$, $(R_{3}+R_{10})$, $\frac{1}{2}(R_{0}+R_{10}+R_{11})$, $\frac{1}{2}(R_{1}+R_{13}+R_{14})$, $\frac{1}{2}(R_{5}+R_{10}+R_{12})$, $\frac{1}{2}(R_{0}+R_{12}+R_{13})$, $\frac{1}{2}(R_{1}+R_{5}+R_{8})$, $\frac{1}{2}(R_{1}+R_{12}+R_{15})$, $\frac{1}{2}(R_{4}+R_{9}+R_{16})$.\\ 
\textbf{(117)} $\mathrm{nd}(Y_{117}) = 10$. Sequence: $\frac{1}{2}(R_{0}+R_{35})$, $\frac{1}{2}(R_{4}+R_{34})$, $\frac{1}{2}(R_{1}+R_{8})$, $\frac{1}{2}(R_{1}+R_{9})$, $\frac{1}{2}(R_{1}+R_{14})$, $\frac{1}{2}(R_{1}+R_{16})$, $\frac{1}{2}(R_{1}+R_{17})$, $\frac{1}{2}(R_{1}+R_{30})$, $\frac{1}{2}(R_{1}+R_{36})$, $\frac{1}{2}(R_{0}+R_{21}+R_{12}+R_{24})$.\\ 
\textbf{(118)} $\mathrm{nd}(Y_{118}) = 10$. Sequence: $\frac{1}{2}(R_{2}+R_{8})$, $\frac{1}{2}(R_{2}+R_{10})$, $\frac{1}{2}(R_{2}+R_{11})$, $\frac{1}{2}(R_{2}+R_{12})$, $\frac{1}{2}(R_{0}+R_{5}+R_{7}+R_{16})$, $\frac{1}{2}(R_{3}+R_{4}+R_{16}+R_{13})$, $\frac{1}{2}(R_{0}+R_{4}+R_{3}+R_{1}+R_{14})$, $\frac{1}{2}(R_{0}+R_{4}+R_{9}+R_{6}+R_{5})$, $\frac{1}{2}(R_{1}+R_{13}+R_{7}+R_{5}+R_{14})$, $\frac{1}{2}(R_{3}+R_{9}+R_{6}+R_{7}+R_{13})$.\\ 
\textbf{(119)} $\mathrm{nd}(Y_{119}) = 10$. Sequence: $\frac{1}{2}(R_{4}+R_{7})$, $\frac{1}{2}(R_{4}+R_{12})$, $\frac{1}{2}(R_{4}+R_{19})$, $(R_{4}+R_{6})$, $\frac{1}{2}(R_{0}+R_{13}+R_{6}+R_{16})$, $\frac{1}{2}(R_{0}+R_{14}+R_{3}+R_{15})$, $\frac{1}{2}(R_{0}+R_{14}+R_{9}+R_{17})$, $\frac{1}{2}(R_{0}+R_{17}+R_{1}+R_{18})$, $\frac{1}{2}(R_{1}+R_{11}+R_{15}+R_{17})$, $\frac{1}{2}(R_{3}+R_{11}+R_{18}+R_{14})$.\\ 
\textbf{(120)} $\mathrm{nd}(Y_{120}) = 10$. Sequence: $\frac{1}{2}(R_{3}+R_{5})$, $\frac{1}{2}(R_{3}+R_{12})$, $\frac{1}{2}(R_{3}+R_{19})$, $(R_{3}+R_{7})$, $\frac{1}{2}(R_{0}+R_{13}+R_{7}+R_{16})$, $\frac{1}{2}(R_{0}+R_{14}+R_{4}+R_{15})$, $\frac{1}{2}(R_{0}+R_{15}+R_{11}+R_{18})$, $\frac{1}{2}(R_{0}+R_{17}+R_{1}+R_{18})$, $\frac{1}{2}(R_{1}+R_{9}+R_{14}+R_{18})$, $\frac{1}{2}(R_{4}+R_{9}+R_{17}+R_{15})$.\\ 
\textbf{(121)} $\mathrm{nd}(Y_{121}) \geq 9$. Sequence: $\frac{1}{2}(R_{1}+R_{9})$, $\frac{1}{2}(R_{9}+R_{10})$, $\frac{1}{2}(R_{3}+R_{9})$, $\frac{1}{2}(R_{9}+R_{3} \cdot H_{2})$, $\frac{1}{2}(R_{11}+R_{11} \cdot H_{0})$, $\frac{1}{2}(R_{0}+R_{2}+R_{4}+R_{5}+R_{7}+R_{8}+R_{8} \cdot H_{2})$, $\frac{1}{2}(R_{0}+R_{5}+R_{6}+R_{7}+R_{8}+R_{12}+R_{8} \cdot H_{2})$, $\frac{1}{2}(R_{0}+R_{2}+2R_{7}+R_{11}+R_{12})$, $\frac{1}{2}(R_{0}+R_{4}+2R_{5}+R_{6}+R_{11})$.\\ 
\textbf{(122)} $\mathrm{nd}(Y_{122}) \geq 7$. Sequence: $\frac{1}{2}(R_{2}+R_{9})$, $(R_{2}+R_{7})$, $(R_{0}+R_{4}+R_{5}+R_{1}+R_{10}+R_{6})$, $\frac{1}{2}(R_{0}+R_{4}+R_{5}+R_{1}+R_{10}+R_{7}+R_{3}+R_{11})$, $\frac{1}{2}(R_{0}+R_{4}+R_{5}+R_{8}+R_{3}+R_{7}+R_{10}+R_{6})$, $\frac{1}{2}(R_{1}+R_{7}+R_{4}+R_{11}+2R_{10}+2R_{6}+2R_{0})$, $\frac{1}{2}(R_{4}+R_{8}+R_{6}+R_{7}+2R_{5}+2R_{1}+2R_{10})$.\\ 
\textbf{(123)} $\mathrm{nd}(Y_{123}) \geq 7$. Sequence: $\frac{1}{2}(R_{2}+R_{3})$, $\frac{1}{2}(R_{2}+R_{10})$, $\frac{1}{2}(R_{2}+R_{11})$, $(R_{2}+R_{12})$, $\frac{1}{2}(R_{0}+R_{5}+R_{6}+R_{1}+R_{12}+R_{8}+R_{4}+R_{13})$, $\frac{1}{2}(R_{0}+R_{5}+R_{6}+R_{9}+R_{4}+R_{8}+R_{12}+R_{7})$, $\frac{1}{2}(R_{0}+R_{7}+R_{12}+R_{1}+R_{6}+R_{9}+R_{4}+R_{13})$.\\ 
\textbf{(124)} $\mathrm{nd}(Y_{124}) = 10$. Sequence: $\frac{1}{2}(R_{0}+R_{28})$, $\frac{1}{2}(R_{0}+R_{29})$, $\frac{1}{2}(R_{0}+R_{43})$, $\frac{1}{2}(R_{1}+R_{37})$, $\frac{1}{2}(R_{1}+R_{41})$, $\frac{1}{2}(R_{2}+R_{38})$, $\frac{1}{2}(R_{2}+R_{40})$, $\frac{1}{2}(R_{3}+R_{31})$, $\frac{1}{2}(R_{6}+R_{42})$, $\frac{1}{2}(R_{0}+R_{3}+R_{6})$.\\ 
\textbf{(125)} $\mathrm{nd}(Y_{125}) = 10$. Sequence: $\frac{1}{2}(R_{0}+R_{30})$, $\frac{1}{2}(R_{0}+R_{31})$, $\frac{1}{2}(R_{0}+R_{41})$, $\frac{1}{2}(R_{1}+R_{37})$, $\frac{1}{2}(R_{1}+R_{40})$, $\frac{1}{2}(R_{2}+R_{32})$, $\frac{1}{2}(R_{2}+R_{36})$, $\frac{1}{2}(R_{4}+R_{25})$, $\frac{1}{2}(R_{4}+R_{39})$, $\frac{1}{2}(R_{0}+R_{2}+R_{4})$.\\ 
\textbf{(126)} $\mathrm{nd}(Y_{126}) = 10$. Sequence: $\frac{1}{2}(R_{13}+R_{20})$, $(R_{0}+R_{14})$, $(R_{1}+R_{7})$, $(R_{2}+R_{12})$, $(R_{5}+R_{10})$, $\frac{1}{2}(R_{0}+R_{12}+R_{13})$, $\frac{1}{2}(R_{1}+R_{10}+R_{13})$, $\frac{1}{2}(R_{2}+R_{5}+R_{15})$, $\frac{1}{2}(R_{0}+R_{9}+R_{4}+R_{10})$, $\frac{1}{2}(R_{1}+R_{4}+R_{9}+R_{12})$.\\ 
\textbf{(127)} $\mathrm{nd}(Y_{127}) = 10$. Sequence: $\frac{1}{2}(R_{0}+R_{38})$, $\frac{1}{2}(R_{0}+R_{39})$, $\frac{1}{2}(R_{0}+R_{50})$, $\frac{1}{2}(R_{1}+R_{46})$, $\frac{1}{2}(R_{1}+R_{47})$, $\frac{1}{2}(R_{2}+R_{40})$, $\frac{1}{2}(R_{2}+R_{45})$, $\frac{1}{2}(R_{3}+R_{51})$, $\frac{1}{2}(R_{5}+R_{43})$, $\frac{1}{2}(R_{0}+R_{2}+R_{6})$.\\ 
\textbf{(128)} $\mathrm{nd}(Y_{128}) = 10$. Sequence: $\frac{1}{2}(R_{2}+R_{7})$, $\frac{1}{2}(R_{2}+R_{14})$, $\frac{1}{2}(R_{2}+R_{22})$, $\frac{1}{2}(R_{4}+R_{11})$, $\frac{1}{2}(R_{4}+R_{21})$, $\frac{1}{2}(R_{5}+R_{6})$, $\frac{1}{2}(R_{5}+R_{23})$, $\frac{1}{2}(R_{6}+R_{23})$, $\frac{1}{2}(R_{11}+R_{21})$, $(R_{2}+R_{16})$.\\ 
\textbf{(129)} $\mathrm{nd}(Y_{129}) = 10$. Sequence: $\frac{1}{2}(R_{1}+R_{8})$, $\frac{1}{2}(R_{12}+R_{15})$, $\frac{1}{2}(R_{13}+R_{14})$, $(R_{0}+R_{15})$, $(R_{1}+R_{4})$, $(R_{5}+R_{14})$, $\frac{1}{2}(R_{1}+R_{14}+R_{15})$, $\frac{1}{2}(R_{0}+R_{9}+R_{5}+R_{3}+R_{16}+R_{11})$, $\frac{1}{2}(R_{0}+R_{9}+R_{6}+R_{3}+R_{4}+R_{11})$, $\frac{1}{2}(R_{3}+R_{4}+R_{11}+R_{7}+R_{9}+R_{5})$.\\ 
\textbf{(130)} $\mathrm{nd}(Y_{130}) = 10$. Sequence: $(R_{1}+R_{12})$, $(R_{2}+R_{10})$, $(R_{3}+R_{7})$, $\frac{1}{2}(R_{2}+R_{7}+R_{12})$, $\frac{1}{2}(R_{0}+R_{5}+R_{1}+R_{6}+R_{3}+R_{9})$, $\frac{1}{2}(R_{0}+R_{5}+R_{4}+R_{3}+R_{6}+R_{10})$, $\frac{1}{2}(R_{0}+R_{9}+R_{11}+R_{1}+R_{6}+R_{10})$, $\frac{1}{2}(R_{1}+R_{5}+R_{4}+R_{8}+R_{10}+R_{6})$, $\frac{1}{2}(R_{1}+R_{6}+R_{3}+R_{4}+R_{8}+R_{11})$, $\frac{1}{2}(R_{3}+R_{6}+R_{10}+R_{8}+R_{11}+R_{9})$.\\ 
\textbf{(131)} $\mathrm{nd}(Y_{131}) = 10$. Sequence: $(R_{0}+R_{12})$, $(R_{3}+R_{11})$, $(R_{5}+R_{8})$, $\frac{1}{2}(R_{0}+R_{3}+R_{8})$, $\frac{1}{2}(R_{1}+R_{6}+R_{2}+R_{12}+R_{9}+R_{11})$, $\frac{1}{2}(R_{1}+R_{6}+R_{5}+R_{9}+R_{12}+R_{10})$, $\frac{1}{2}(R_{1}+R_{10}+R_{4}+R_{5}+R_{9}+R_{11})$, $\frac{1}{2}(R_{2}+R_{6}+R_{5}+R_{9}+R_{11}+R_{7})$, $\frac{1}{2}(R_{2}+R_{7}+R_{4}+R_{5}+R_{9}+R_{12})$, $\frac{1}{2}(R_{4}+R_{7}+R_{11}+R_{9}+R_{12}+R_{10})$.\\ 
\textbf{(132)} $\mathrm{nd}(Y_{132}) = 10$. Sequence: $(R_{0}+R_{20})$, $(R_{1}+R_{10})$, $(R_{3}+R_{11})$, $(R_{7}+R_{15})$, $\frac{1}{2}(R_{0}+R_{12}+R_{13})$, $\frac{1}{2}(R_{2}+R_{10}+R_{20})$, $\frac{1}{2}(R_{0}+R_{14}+R_{15})$, $\frac{1}{2}(R_{0}+R_{12}+R_{11}+R_{17})$, $\frac{1}{2}(R_{1}+R_{9}+R_{15}+R_{17})$, $\frac{1}{2}(R_{0}+R_{12}+R_{11}+R_{1}+R_{9}+R_{15})$.\\ 
\textbf{(133)} $\mathrm{nd}(Y_{133}) = 10$. Sequence: $(R_{0}+R_{12})$, $(R_{2}+R_{9})$, $(R_{3}+R_{11})$, $\frac{1}{2}(R_{1}+R_{6}+R_{10}+R_{12})$, $\frac{1}{2}(R_{2}+R_{4}+R_{8}+R_{7})$, $\frac{1}{2}(R_{1}+R_{3}+R_{4}+R_{2}+R_{6})$, $\frac{1}{2}(R_{1}+R_{3}+R_{4}+R_{8}+R_{12})$, $\frac{1}{2}(R_{1}+R_{3}+R_{7}+R_{2}+R_{12})$, $\frac{1}{2}(R_{1}+R_{5}+R_{4}+R_{2}+R_{12})$, $\frac{1}{2}(R_{2}+R_{4}+R_{3}+R_{10}+R_{12})$.\\ 
\textbf{(134)} $\mathrm{nd}(Y_{134}) = 10$. Sequence: $\frac{1}{2}(R_{0}+R_{18})$, $\frac{1}{2}(R_{10}+R_{17})$, $\frac{1}{2}(R_{12}+R_{15})$, $(R_{0}+R_{19})$, $(R_{3}+R_{11})$, $\frac{1}{2}(R_{0}+R_{10}+R_{13})$, $\frac{1}{2}(R_{0}+R_{11}+R_{12})$, $\frac{1}{2}(R_{8}+R_{10}+R_{12})$, $\frac{1}{2}(R_{8}+R_{11}+R_{13})$, $\frac{1}{2}(R_{3}+R_{4}+R_{9}+R_{19}+2R_{5})$.\\ 
\textbf{(135)} $\mathrm{nd}(Y_{135}) = 10$. Sequence: $\frac{1}{2}(R_{0}+R_{18})$, $\frac{1}{2}(R_{9}+R_{16})$, $\frac{1}{2}(R_{11}+R_{14})$, $(R_{0}+R_{17})$, $(R_{1}+R_{15})$, $\frac{1}{2}(R_{0}+R_{9}+R_{12})$, $\frac{1}{2}(R_{0}+R_{14}+R_{15})$, $\frac{1}{2}(R_{2}+R_{9}+R_{14})$, $\frac{1}{2}(R_{2}+R_{12}+R_{15})$, $\frac{1}{2}(R_{1}+R_{3}+R_{4}+R_{17}+2R_{8})$.\\ 
\textbf{(136)} $\mathrm{nd}(Y_{136}) = 10$. Sequence: $(R_{0}+R_{18})$, $(R_{3}+R_{15})$, $\frac{1}{2}(R_{0}+R_{12}+R_{13})$, $\frac{1}{2}(R_{0}+R_{14}+R_{15})$, $\frac{1}{2}(R_{1}+R_{5}+R_{10})$, $\frac{1}{2}(R_{2}+R_{6}+R_{18})$, $\frac{1}{2}(R_{3}+R_{11}+R_{19})$, $\frac{1}{2}(R_{4}+R_{7}+R_{18})$, $\frac{1}{2}(R_{8}+R_{9}+R_{18})$, $\frac{1}{2}(R_{3}+R_{4}+R_{18}+R_{9})$.\\ 
\textbf{(137)} $\mathrm{nd}(Y_{137}) = 10$. Sequence: $(R_{0}+R_{18})$, $(R_{1}+R_{10})$, $\frac{1}{2}(R_{0}+R_{12}+R_{13})$, $\frac{1}{2}(R_{0}+R_{14}+R_{15})$, $\frac{1}{2}(R_{0}+R_{16}+R_{17})$, $\frac{1}{2}(R_{1}+R_{5}+R_{9})$, $\frac{1}{2}(R_{1}+R_{14}+R_{17})$, $\frac{1}{2}(R_{1}+R_{15}+R_{16})$, $\frac{1}{2}(R_{3}+R_{10}+R_{18})$, $\frac{1}{2}(R_{0}+R_{14}+R_{1}+R_{16})$.\\ 
\textbf{(138)} $\mathrm{nd}(Y_{138}) = 10$. Sequence: $\frac{1}{2}(R_{7}+R_{11})$, $(R_{0}+R_{10})$, $(R_{1}+R_{6})$, $(R_{2}+R_{9})$, $\frac{1}{2}(R_{0}+R_{8}+R_{5}+R_{12})$, $\frac{1}{2}(R_{4}+R_{8}+R_{5}+R_{13})$, $\frac{1}{2}(R_{4}+R_{8}+R_{6}+R_{12})$, $\frac{1}{2}(R_{4}+R_{9}+R_{5}+R_{12})$, $\frac{1}{2}(R_{5}+R_{8}+R_{6}+R_{9})$, $\frac{1}{2}(R_{1}+R_{3}+R_{2}+R_{7}+R_{10})$.\\ 
\textbf{(139)} $\mathrm{nd}(Y_{139}) = 10$. Sequence: $(R_{0}+R_{2}+R_{10})$, $(R_{1}+R_{7}+R_{11})$, $(R_{4}+R_{6}+R_{9})$, $\frac{1}{2}(R_{0}+R_{2}+R_{9}+R_{6}+R_{5})$, $\frac{1}{2}(R_{0}+R_{5}+R_{7}+R_{1}+R_{10})$, $\frac{1}{2}(R_{1}+R_{7}+R_{6}+R_{4}+R_{8})$, $\frac{1}{2}(R_{1}+R_{10}+R_{2}+R_{3}+R_{11})$, $\frac{1}{2}(R_{2}+R_{9}+R_{4}+R_{8}+R_{10})$, $\frac{1}{2}(R_{3}+R_{9}+R_{6}+R_{7}+R_{11})$, $\frac{1}{2}(R_{1}+R_{7}+R_{6}+R_{9}+R_{2}+R_{10})$.\\ 
\textbf{(140)} $\mathrm{nd}(Y_{140}) = 10$. Sequence: $(R_{0}+R_{3}+R_{10})$, $(R_{1}+R_{6}+R_{11})$, $(R_{5}+R_{7}+R_{9})$, $\frac{1}{2}(R_{0}+R_{3}+R_{2}+R_{1}+R_{11})$, $\frac{1}{2}(R_{0}+R_{3}+R_{9}+R_{5}+R_{4})$, $\frac{1}{2}(R_{0}+R_{10}+R_{8}+R_{6}+R_{11})$, $\frac{1}{2}(R_{1}+R_{2}+R_{9}+R_{7}+R_{6})$, $\frac{1}{2}(R_{3}+R_{9}+R_{7}+R_{8}+R_{10})$, $\frac{1}{2}(R_{4}+R_{5}+R_{7}+R_{6}+R_{11})$, $\frac{1}{2}(R_{0}+R_{3}+R_{9}+R_{7}+R_{6}+R_{11})$.\\ 
\textbf{(141)} $\mathrm{nd}(Y_{141}) = 10$. Sequence: $\frac{1}{2}(R_{6}+R_{17})$, $\frac{1}{2}(R_{11}+R_{14})$, $(R_{1}+R_{8})$, $(R_{2}+R_{13})$, $\frac{1}{2}(R_{0}+R_{7}+R_{9}+R_{12})$, $\frac{1}{2}(R_{0}+R_{7}+R_{10}+R_{13})$, $\frac{1}{2}(R_{0}+R_{7}+R_{17}+R_{8})$, $\frac{1}{2}(R_{0}+R_{8}+R_{9}+R_{13})$, $\frac{1}{2}(R_{0}+R_{8}+R_{10}+R_{12})$, $\frac{1}{2}(R_{7}+R_{9}+R_{8}+R_{10})$.\\ 
\textbf{(143)} $\mathrm{nd}(Y_{143}) \geq 8$. Sequence: $(R_{0}+R_{5}+R_{10}+R_{8})$, $(R_{1}+R_{6}+R_{4}+R_{9})$, $\frac{1}{2}(R_{0}+R_{1}+R_{6}+R_{4}+R_{3}+R_{2}+R_{10}+R_{8})$, $\frac{1}{2}(R_{0}+R_{1}+R_{6}+R_{4}+R_{3}+R_{7}+R_{10}+R_{5})$, $\frac{1}{2}(R_{0}+R_{1}+R_{9}+R_{4}+R_{3}+R_{2}+R_{10}+R_{5})$, $\frac{1}{2}(R_{0}+R_{1}+R_{9}+R_{4}+R_{3}+R_{7}+R_{10}+R_{8})$, $\frac{1}{2}(R_{2}+R_{7}+R_{6}+R_{9}+2R_{3}+2R_{4})$, $\frac{1}{2}(R_{5}+R_{8}+R_{6}+R_{9}+2R_{0}+2R_{1})$.\\ 
\textbf{(144)} $\mathrm{nd}(Y_{144}) \geq 8$. Sequence: $(R_{0}+R_{4}+R_{10}+R_{6})$, $\frac{1}{2}(R_{0}+R_{4}+R_{3}+R_{9}+R_{2}+R_{8}+R_{1}+R_{6})$, $\frac{1}{2}(R_{0}+R_{4}+R_{3}+R_{9}+R_{7}+R_{8}+R_{11}+R_{6})$, $\frac{1}{2}(R_{1}+R_{6}+R_{10}+R_{4}+R_{3}+R_{9}+R_{7}+R_{8})$, $\frac{1}{2}(R_{2}+R_{8}+R_{11}+R_{6}+R_{10}+R_{4}+R_{3}+R_{9})$, $\frac{1}{2}(R_{0}+R_{1}+R_{10}+R_{11}+2R_{6})$, $\frac{1}{2}(R_{0}+R_{3}+R_{5}+R_{10}+2R_{4})$, $\frac{1}{2}(R_{0}+R_{10}+R_{2}+R_{7}+2R_{4}+2R_{3}+2R_{9})$.\\ 
\textbf{(145)} $\mathrm{nd}(Y_{145}) = 4$. Sequence: $(R_{0}+R_{2}+R_{3}+R_{4}+R_{5}+R_{6}+R_{7}+R_{9})$, $\frac{1}{2}(R_{0}+R_{8}+R_{1}+R_{3}+2R_{9}+2R_{7}+2R_{6}+2R_{5}+2R_{4})$, $\frac{1}{2}(R_{1}+R_{5}+R_{7}+R_{8}+2R_{4}+2R_{3}+2R_{2}+2R_{0}+2R_{9})$, $\frac{1}{2}(2R_{2}+3R_{0}+4R_{9}+3R_{7}+2R_{6}+R_{5}+2R_{8}+R_{3})$.\\ 
\textbf{(154)} $\mathrm{nd}(Y_{154}) = 10$. Sequence: $\frac{1}{2}(R_{1}+R_{2})$, $\frac{1}{2}(R_{1}+R_{11})$, $\frac{1}{2}(R_{1}+R_{25})$, $\frac{1}{2}(R_{1}+R_{10})$, $\frac{1}{2}(R_{1}+R_{15})$, $\frac{1}{2}(R_{1}+R_{28})$, $(R_{1}+R_{18})$, $\frac{1}{2}(R_{0}+R_{20}+R_{21})$, $\frac{1}{2}(R_{3}+R_{8}+R_{26})$, $\frac{1}{2}(R_{3}+R_{16}+R_{19})$.\\ 
\textbf{(155)} $\mathrm{nd}(Y_{155}) = 10$. Sequence: $\frac{1}{2}(R_{0}+R_{1})$, $\frac{1}{2}(R_{0}+R_{9})$, $\frac{1}{2}(R_{0}+R_{11})$, $\frac{1}{2}(R_{0}+R_{8})$, $\frac{1}{2}(R_{0}+R_{23})$, $\frac{1}{2}(R_{0}+R_{26})$, $\frac{1}{2}(R_{2}+R_{6}+R_{24})$, $\frac{1}{2}(R_{2}+R_{12}+R_{16})$, $\frac{1}{2}(R_{6}+R_{10}+R_{16})$, $(R_{2}+R_{6}+R_{16})$.\\ 
\textbf{(157)} $\mathrm{nd}(Y_{157}) = 10$. Sequence: $\frac{1}{2}(R_{4}+R_{8})$, $\frac{1}{2}(R_{4}+R_{15})$, $\frac{1}{2}(R_{5}+R_{9})$, $\frac{1}{2}(R_{5}+R_{14})$, $\frac{1}{2}(R_{4}+R_{23})$, $(R_{4}+R_{7})$, $\frac{1}{2}(R_{0}+R_{17}+R_{3}+R_{18})$, $\frac{1}{2}(R_{0}+R_{20}+R_{1}+R_{21})$, $\frac{1}{2}(R_{1}+R_{13}+R_{18}+R_{20})$, $\frac{1}{2}(R_{3}+R_{13}+R_{21}+R_{17})$.\\ 
\textbf{(158)} $\mathrm{nd}(Y_{158}) \geq 9$. Sequence: $\frac{1}{2}(R_{0}+R_{2})$, $\frac{1}{2}(R_{2}+R_{16})$, $\frac{1}{2}(R_{2}+R_{2} \cdot H_{2})$, $\frac{1}{2}(R_{3}+R_{4})$, $\frac{1}{2}(R_{3}+R_{12})$, $\frac{1}{2}(R_{2}+R_{14})$, $(R_{2}+R_{8})$, $\frac{1}{2}(R_{1}+R_{6}+R_{8}+R_{15}+2R_{7})$, $\frac{1}{2}(R_{5}+R_{6}+R_{8}+R_{9}+2R_{11})$.\\ 
\textbf{(159)} $\mathrm{nd}(Y_{159}) \geq 7$. Sequence: $\frac{1}{2}(R_{0}+R_{13})$, $\frac{1}{2}(R_{3}+R_{4})$, $\frac{1}{2}(R_{3}+R_{11})$, $\frac{1}{2}(R_{3}+R_{12})$, $(R_{3}+R_{14})$, $\frac{1}{2}(R_{1}+R_{6}+R_{7}+R_{2}+R_{14}+R_{9}+R_{5}+R_{15})$, $\frac{1}{2}(R_{1}+R_{8}+R_{14}+R_{2}+R_{7}+R_{10}+R_{5}+R_{15})$.\\ 
\textbf{(161)} $\mathrm{nd}(Y_{161}) = 10$. Sequence: $\frac{1}{2}(R_{0}+R_{39})$, $\frac{1}{2}(R_{0}+R_{47})$, $\frac{1}{2}(R_{0}+R_{53})$, $\frac{1}{2}(R_{1}+R_{37})$, $\frac{1}{2}(R_{1}+R_{49})$, $\frac{1}{2}(R_{2}+R_{48})$, $\frac{1}{2}(R_{2}+R_{52})$, $\frac{1}{2}(R_{3}+R_{55})$, $\frac{1}{2}(R_{4}+R_{46})$, $\frac{1}{2}(R_{0}+R_{2}+R_{7})$.\\ 
\textbf{(162)} $\mathrm{nd}(Y_{162}) = 10$. Sequence: $\frac{1}{2}(R_{2}+R_{8})$, $\frac{1}{2}(R_{2}+R_{18})$, $\frac{1}{2}(R_{5}+R_{13})$, $\frac{1}{2}(R_{5}+R_{19})$, $\frac{1}{2}(R_{6}+R_{27})$, $\frac{1}{2}(R_{11}+R_{27})$, $\frac{1}{2}(R_{2}+R_{26})$, $\frac{1}{2}(R_{4}+R_{12})$, $\frac{1}{2}(R_{12}+R_{20})$, $(R_{2}+R_{17})$.\\ 
\textbf{(163)} $\mathrm{nd}(Y_{163}) = 10$. Sequence: $\frac{1}{2}(R_{1}+R_{11})$, $\frac{1}{2}(R_{16}+R_{19})$, $\frac{1}{2}(R_{17}+R_{18})$, $\frac{1}{2}(R_{2}+R_{3})$, $\frac{1}{2}(R_{2}+R_{4})$, $\frac{1}{2}(R_{2}+R_{23})$, $\frac{1}{2}(R_{2}+R_{15})$, $\frac{1}{2}(R_{2}+R_{20})$, $\frac{1}{2}(R_{2}+R_{25})$, $\frac{1}{2}(R_{2}+R_{22})$.\\ 
\textbf{(164)} $\mathrm{nd}(Y_{164}) = 10$. Sequence: $(R_{0}+R_{8})$, $(R_{1}+R_{13})$, $(R_{7}+R_{9})$, $(R_{3}+R_{11})$, $\frac{1}{2}(R_{1}+R_{6}+R_{2}+R_{12}+R_{8}+R_{11})$, $\frac{1}{2}(R_{1}+R_{6}+R_{5}+R_{4}+R_{7}+R_{11})$, $\frac{1}{2}(R_{1}+R_{10}+R_{4}+R_{5}+R_{8}+R_{11})$, $\frac{1}{2}(R_{1}+R_{10}+R_{12}+R_{2}+R_{7}+R_{11})$, $\frac{1}{2}(R_{2}+R_{6}+R_{5}+R_{8}+R_{11}+R_{7})$, $\frac{1}{2}(R_{4}+R_{7}+R_{11}+R_{8}+R_{12}+R_{10})$.\\ 
\textbf{(165)} $\mathrm{nd}(Y_{165}) = 10$. Sequence: $\frac{1}{2}(R_{1}+R_{2})$, $\frac{1}{2}(R_{1}+R_{9})$, $\frac{1}{2}(R_{1}+R_{24})$, $\frac{1}{2}(R_{1}+R_{26})$, $\frac{1}{2}(R_{1}+R_{10})$, $\frac{1}{2}(R_{1}+R_{30})$, $(R_{1}+R_{22})$, $\frac{1}{2}(R_{0}+R_{22}+R_{23})$, $\frac{1}{2}(R_{3}+R_{8}+R_{29})$, $\frac{1}{2}(R_{3}+R_{7}+R_{28})$.\\ 
\textbf{(166)} $\mathrm{nd}(Y_{166}) = 10$. Sequence: $\frac{1}{2}(R_{0}+R_{24})$, $\frac{1}{2}(R_{14}+R_{21})$, $\frac{1}{2}(R_{16}+R_{19})$, $\frac{1}{2}(R_{2}+R_{3})$, $\frac{1}{2}(R_{2}+R_{13})$, $\frac{1}{2}(R_{2}+R_{22})$, $\frac{1}{2}(R_{2}+R_{27})$, $\frac{1}{2}(R_{2}+R_{4})$, $\frac{1}{2}(R_{2}+R_{26})$, $\frac{1}{2}(R_{2}+R_{25})$.\\ 
\textbf{(167)} $\mathrm{nd}(Y_{167}) = 10$. Sequence: $\frac{1}{2}(R_{1}+R_{2})$, $\frac{1}{2}(R_{1}+R_{10})$, $\frac{1}{2}(R_{1}+R_{15})$, $\frac{1}{2}(R_{1}+R_{24})$, $\frac{1}{2}(R_{1}+R_{13})$, $\frac{1}{2}(R_{1}+R_{29})$, $(R_{1}+R_{14})$, $\frac{1}{2}(R_{0}+R_{7}+R_{19})$, $\frac{1}{2}(R_{3}+R_{14}+R_{18})$, $\frac{1}{2}(R_{5}+R_{8}+R_{28})$.\\ 
\textbf{(168)} $\mathrm{nd}(Y_{168}) = 10$. Sequence: $\frac{1}{2}(R_{2}+R_{10})$, $\frac{1}{2}(R_{2}+R_{14})$, $\frac{1}{2}(R_{2}+R_{12})$, $\frac{1}{2}(R_{2}+R_{13})$, $\frac{1}{2}(R_{0}+R_{4}+R_{3}+R_{1}+R_{16})$, $\frac{1}{2}(R_{0}+R_{4}+R_{11}+R_{6}+R_{5})$, $\frac{1}{2}(R_{1}+R_{3}+R_{11}+R_{8}+R_{7})$, $\frac{1}{2}(R_{1}+R_{15}+R_{9}+R_{5}+R_{16})$, $\frac{1}{2}(R_{3}+R_{11}+R_{6}+R_{9}+R_{15})$, $\frac{1}{2}(R_{5}+R_{6}+R_{8}+R_{7}+R_{16})$.\\ 
\textbf{(169)} $\mathrm{nd}(Y_{169}) = 10$. Sequence: $\frac{1}{2}(R_{1}+R_{10})$, $\frac{1}{2}(R_{1}+R_{12})$, $\frac{1}{2}(R_{1}+R_{13})$, $\frac{1}{2}(R_{1}+R_{14})$, $\frac{1}{2}(R_{0}+R_{4}+R_{11}+R_{6}+R_{5})$, $\frac{1}{2}(R_{0}+R_{5}+R_{9}+R_{2}+R_{16})$, $\frac{1}{2}(R_{2}+R_{8}+R_{7}+R_{6}+R_{9})$, $\frac{1}{2}(R_{2}+R_{15}+R_{3}+R_{4}+R_{16})$, $\frac{1}{2}(R_{3}+R_{11}+R_{6}+R_{9}+R_{15})$, $\frac{1}{2}(R_{4}+R_{11}+R_{7}+R_{8}+R_{16})$.\\ 
\textbf{(171)} $\mathrm{nd}(Y_{171}) \geq 8$. Sequence: $\frac{1}{2}(R_{1}+R_{3})$, $\frac{1}{2}(R_{1}+R_{12})$, $\frac{1}{2}(R_{1}+R_{13})$, $(R_{1}+R_{5})$, $\frac{1}{2}(R_{0}+R_{6}+R_{5}+R_{11}+R_{4}+R_{10}+R_{2}+R_{8})$, $\frac{1}{2}(R_{2}+R_{8}+R_{14}+R_{6}+R_{5}+R_{11}+R_{9}+R_{10})$, $\frac{1}{2}(R_{0}+R_{2}+R_{14}+R_{15}+2R_{8})$, $\frac{1}{2}(R_{0}+R_{5}+R_{7}+R_{14}+2R_{6})$.\\ 
\textbf{(172)} $\mathrm{nd}(Y_{172}) = 4$. Sequence: $\frac{1}{2}(R_{1}+R_{2})$, $(R_{1}+R_{10})$, $\frac{1}{2}(R_{0}+R_{10}+R_{3}+R_{5}+2R_{11}+2R_{9}+2R_{8}+2R_{7}+2R_{6})$, $\frac{1}{2}(R_{3}+R_{7}+R_{9}+R_{10}+2R_{6}+2R_{5}+2R_{4}+2R_{0}+2R_{11})$.\\ 
\textbf{(173)} $\mathrm{nd}(Y_{173}) = 10$. Sequence: $\frac{1}{2}(R_{0}+R_{37})$, $\frac{1}{2}(R_{0}+R_{38})$, $\frac{1}{2}(R_{0}+R_{52})$, $\frac{1}{2}(R_{1}+R_{43})$, $\frac{1}{2}(R_{1}+R_{49})$, $\frac{1}{2}(R_{2}+R_{45})$, $\frac{1}{2}(R_{3}+R_{50})$, $\frac{1}{2}(R_{8}+R_{57})$, $\frac{1}{2}(R_{13}+R_{58})$, $\frac{1}{2}(R_{0}+R_{10}+R_{13})$.\\ 
\textbf{(174)} $\mathrm{nd}(Y_{174}) = 10$. Sequence: $(R_{0}+R_{19})$, $(R_{2}+R_{14})$, $(R_{3}+R_{6})$, $(R_{5}+R_{15})$, $\frac{1}{2}(R_{0}+R_{13}+R_{14})$, $\frac{1}{2}(R_{3}+R_{13}+R_{15})$, $\frac{1}{2}(R_{1}+R_{16}+R_{17})$, $(R_{1}+R_{4}+R_{17})$, $\frac{1}{2}(R_{0}+R_{15}+R_{1}+R_{17})$, $\frac{1}{2}(R_{1}+R_{3}+R_{14}+R_{17})$.\\ 
\textbf{(175)} $\mathrm{nd}(Y_{175}) = 10$. Sequence: $\frac{1}{2}(R_{3}+R_{8})$, $\frac{1}{2}(R_{3}+R_{17})$, $\frac{1}{2}(R_{3}+R_{26})$, $\frac{1}{2}(R_{5}+R_{12})$, $\frac{1}{2}(R_{5}+R_{18})$, $\frac{1}{2}(R_{6}+R_{11})$, $\frac{1}{2}(R_{6}+R_{27})$, $\frac{1}{2}(R_{11}+R_{27})$, $\frac{1}{2}(R_{12}+R_{18})$, $(R_{3}+R_{16})$.\\ 
\textbf{(176)} $\mathrm{nd}(Y_{176}) \geq 7$. Sequence: $\frac{1}{2}(R_{0}+R_{13})$, $\frac{1}{2}(R_{3}+R_{12})$, $\frac{1}{2}(R_{7}+R_{8})$, $(R_{0}+R_{15})$, $(R_{3}+R_{10})$, $(R_{7}+R_{11})$, $\frac{1}{2}(R_{0}+R_{3}+R_{7})$.\\ 
\textbf{(178)} $\mathrm{nd}(Y_{178}) = 10$. Sequence: $\frac{1}{2}(R_{0}+R_{22})$, $\frac{1}{2}(R_{13}+R_{20})$, $\frac{1}{2}(R_{15}+R_{18})$, $(R_{0}+R_{21})$, $(R_{1}+R_{19})$, $(R_{6}+R_{17})$, $\frac{1}{2}(R_{0}+R_{13}+R_{17})$, $\frac{1}{2}(R_{0}+R_{15}+R_{19})$, $\frac{1}{2}(R_{2}+R_{13}+R_{15})$, $\frac{1}{2}(R_{2}+R_{17}+R_{19})$.\\ 
\textbf{(179)} $\mathrm{nd}(Y_{179}) = 10$. Sequence: $(R_{0}+R_{13})$, $(R_{1}+R_{15})$, $(R_{2}+R_{14})$, $(R_{3}+R_{12})$, $(R_{4}+R_{9})$, $(R_{5}+R_{8})$, $\frac{1}{2}(R_{0}+R_{3}+R_{8})$, $\frac{1}{2}(R_{0}+R_{9}+R_{15})$, $\frac{1}{2}(R_{3}+R_{9}+R_{14})$, $\frac{1}{2}(R_{8}+R_{14}+R_{15})$.\\ 
\textbf{(180)} $\mathrm{nd}(Y_{180}) = 10$. Sequence: $(R_{0}+R_{22})$, $(R_{1}+R_{14})$, $(R_{2}+R_{21})$, $(R_{4}+R_{16})$, $(R_{6}+R_{12})$, $(R_{7}+R_{19})$, $\frac{1}{2}(R_{0}+R_{18}+R_{19})$, $\frac{1}{2}(R_{1}+R_{18}+R_{21})$, $\frac{1}{2}(R_{2}+R_{7}+R_{23})$, $\frac{1}{2}(R_{2}+R_{7}+R_{4}+R_{14}+R_{22}+R_{12})$.\\ 
\textbf{(181)} $\mathrm{nd}(Y_{181}) = 10$. Sequence: $(R_{0}+R_{14})$, $(R_{2}+R_{7})$, $(R_{3}+R_{9})$, $(R_{4}+R_{13})$, $(R_{5}+R_{10})$, $\frac{1}{2}(R_{0}+R_{10}+R_{8}+R_{1}+R_{12})$, $\frac{1}{2}(R_{0}+R_{10}+R_{8}+R_{7}+R_{13})$, $\frac{1}{2}(R_{0}+R_{10}+R_{9}+R_{1}+R_{13})$, $\frac{1}{2}(R_{0}+R_{11}+R_{8}+R_{1}+R_{13})$, $\frac{1}{2}(R_{1}+R_{8}+R_{10}+R_{6}+R_{13})$.\\ 
\textbf{(182)} $\mathrm{nd}(Y_{182}) = 10$. Sequence: $(R_{0}+R_{4}+R_{13})$, $(R_{1}+R_{3}+R_{12})$, $(R_{5}+R_{6}+R_{10})$, $(R_{7}+R_{8}+R_{9})$, $\frac{1}{2}(R_{0}+R_{4}+R_{3}+R_{1}+R_{14})$, $\frac{1}{2}(R_{0}+R_{4}+R_{11}+R_{6}+R_{5})$, $\frac{1}{2}(R_{0}+R_{5}+R_{10}+R_{2}+R_{13})$, $\frac{1}{2}(R_{0}+R_{13}+R_{9}+R_{7}+R_{14})$, $\frac{1}{2}(R_{1}+R_{12}+R_{10}+R_{5}+R_{14})$, $\frac{1}{2}(R_{2}+R_{12}+R_{3}+R_{4}+R_{13})$.\\ 
\textbf{(183)} $\mathrm{nd}(Y_{183}) = 10$. Sequence: $(R_{0}+R_{15})$, $(R_{2}+R_{10})$, $(R_{3}+R_{14})$, $\frac{1}{2}(R_{0}+R_{11}+R_{5}+R_{12})$, $\frac{1}{2}(R_{1}+R_{13}+R_{8}+R_{14})$, $\frac{1}{2}(R_{0}+R_{11}+R_{9}+R_{1}+R_{13})$, $\frac{1}{2}(R_{0}+R_{11}+R_{9}+R_{8}+R_{14})$, $\frac{1}{2}(R_{0}+R_{11}+R_{10}+R_{1}+R_{14})$, $\frac{1}{2}(R_{0}+R_{12}+R_{9}+R_{1}+R_{14})$, $\frac{1}{2}(R_{1}+R_{9}+R_{11}+R_{5}+R_{14})$.\\ 
\textbf{(184)} $\mathrm{nd}(Y_{184}) = 7$. Sequence: $(R_{0}+R_{3}+R_{4}+R_{8})$, $(R_{1}+R_{6}+R_{10}+R_{9})$, $(R_{2}+R_{7}+R_{5}+R_{11})$, $\frac{1}{2}(R_{0}+R_{3}+R_{4}+R_{5}+R_{7}+R_{2}+R_{1}+R_{9}+R_{10})$, $\frac{1}{2}(R_{0}+R_{3}+R_{4}+R_{5}+R_{11}+R_{2}+R_{1}+R_{6}+R_{10})$, $\frac{1}{2}(R_{0}+R_{8}+R_{4}+R_{5}+R_{7}+R_{2}+R_{1}+R_{6}+R_{10})$, $\frac{1}{2}(R_{0}+R_{8}+R_{4}+R_{5}+R_{11}+R_{2}+R_{1}+R_{9}+R_{10})$.
}


\begin{thebibliography}{BHPV04}

\bibitem[BHPV04]{BHPV04}
Wolf~P. Barth, Klaus Hulek, Chris A.~M. Peters, and Antonius Van~de Ven.
\newblock {\em Compact complex surfaces}, volume~4 of {\em Ergebnisse der
  Mathematik und ihrer Grenzgebiete. 3. Folge. A Series of Modern Surveys in
  Mathematics}.
\newblock Springer-Verlag, Berlin, second edition, 2004.

\bibitem[BM22]{BM22}
Simon Brandhorst and Giacomo Mezzedimi.
\newblock Borcherds lattices and {K}3 surfaces of zero entropy.
\newblock {\em arXiv e-prints}, page arXiv:2211.09600, March 2022.
\newblock \url{https://arxiv.org/abs/2211.09600}.

\bibitem[BP83]{BP83}
Wolf~P. Barth and Chris A.~M. Peters.
\newblock Automorphisms of {E}nriques surfaces.
\newblock {\em Invent. Math.}, 73(3):383--411, 1983.

\bibitem[BS22]{BS22}
Simon Brandhorst and Ichiro Shimada.
\newblock Automorphism groups of certain {E}nriques surfaces.
\newblock {\em Found. Comput. Math.}, 22(5):1463--1512, 2022.

\bibitem[CD89]{CD89}
Fran\c{c}ois Cossec and Igor Dolgachev.
\newblock {\em Enriques surfaces. {I}}, volume~76 of {\em Progress in
  Mathematics}.
\newblock Birkh\"{a}user Boston, Inc., Boston, MA, 1989.

\bibitem[CDL24]{CDL24}
Fran\c{c}ois Cossec, Igor Dolgachev, and Christian Liedtke.
\newblock Enriques surfaces. {I}, 2024.
\newblock \url{http://www.math.lsa.umich.edu/~idolga/EnriquesOne.pdf} (version
  of April 19, 2024).

\bibitem[Cos83]{Cos83}
Fran\c{c}ois Cossec.
\newblock Reye congruences.
\newblock {\em Trans. Amer. Math. Soc.}, 280(2):737--751, 1983.

\bibitem[Cos85]{Cos85}
Fran\c{c}ois Cossec.
\newblock On the {P}icard group of {E}nriques surfaces.
\newblock {\em Math. Ann.}, 271(4):577--600, 1985.

\bibitem[DK24]{DK24}
Igor Dolgachev and Shigeyuki Kond\=o.
\newblock Enriques surfaces. {II}, 2024.
\newblock \url{http://www.math.lsa.umich.edu/~idolga/EnriquesTwo.pdf} (version
  of April 19, 2024).

\bibitem[DM19]{DM19}
Igor {Dolgachev} and Dimitri {Markushevich}.
\newblock {Lagrangian tens of planes, Enriques surfaces and holomorphic
  symplectic fourfolds}.
\newblock {\em arXiv e-prints}, page arXiv:1906.01445, June 2019.

\bibitem[Dol18]{Dol18}
Igor Dolgachev.
\newblock Salem numbers and {E}nriques surfaces.
\newblock {\em Exp. Math.}, 27(3):287--301, 2018.

\bibitem[Kon86]{Kon86}
Shigeyuki Kond\=o.
\newblock Enriques surfaces with finite automorphism groups.
\newblock {\em Japan. J. Math. (N.S.)}, 12(2):191--282, 1986.

\bibitem[Mil58]{Mil58}
John Milnor.
\newblock On simply connected {$4$}-manifolds.
\newblock In {\em Symposium internacional de topolog\'{\i}a algebraica
  {I}nternational symposium on algebraic topology}, pages 122--128. Universidad
  Nacional Aut\'{o}noma de M\'{e}xico and UNESCO, Mexico City, 1958.

\bibitem[MMV22]{MMV22a}
Gebhard {Martin}, Giacomo {Mezzedimi}, and Davide~Cesare {Veniani}.
\newblock {Enriques surfaces of non-degeneracy 3}.
\newblock {\em arXiv e-prints}, page arXiv:2203.08000, March 2022.
\newblock \url{https://arxiv.org/abs/2203.08000}.

\bibitem[MMV23]{MMV22b}
Gebhard Martin, Giacomo Mezzedimi, and Davide~Cesare Veniani.
\newblock On extra-special {E}nriques surfaces.
\newblock {\em Math. Ann.}, 387(1-2):133--143, 2023.

\bibitem[MRS22a]{MRS22Paper}
Riccardo Moschetti, Franco Rota, and Luca Schaffler.
\newblock A computational view on the non-degeneracy invariant for Enriques
  surfaces.
\newblock {\em Experimental Mathematics}, 0(0):1--22, 2022.

\bibitem[MRS22b]{MRS22Code}
Riccardo Moschetti, Franco Rota, and Luca Schaffler.
\newblock Sage{M}ath code {CND}{F}inder, 2022.
\newblock \url{https://github.com/rmoschetti/CNDFinder}.

\bibitem[MRS23]{MRS23Code}
Riccardo Moschetti, Franco Rota, and Luca Schaffler.
\newblock Check isotropic sequences, 2023.
\newblock \url{https://github.com/rmoschetti/CND-TauTauBar-Enriques}.

\bibitem[SB20a]{SB20data}
Ichiro Shimada and Simon Brandhorst.
\newblock Automorphism groups of certain {E}nriques surfaces, 2020.
\newblock Zenodo. \url{https://doi.org/10.5281/zenodo.4327019}. File:
  compdata.zip.

\bibitem[SB20b]{SB20explanation}
Ichiro Shimada and Simon Brandhorst.
\newblock Automorphism groups of certain {E}nriques surfaces, 2020.
\newblock Zenodo. \url{https://doi.org/10.5281/zenodo.4327019}. File:
  AutEnrVolCompData.pdf.

\bibitem[SB20c]{SB20ellipticfibrations}
Ichiro Shimada and Simon Brandhorst.
\newblock Automorphism groups of certain {E}nriques surfaces, 2020.
\newblock Zenodo. \url{https://doi.org/10.5281/zenodo.4327019}. File:
  EllFibs.pdf.

\bibitem[Ser73]{Ser73}
Jean-Pierre Serre.
\newblock {\em A course in arithmetic}.
\newblock Graduate Texts in Mathematics, No. 7. Springer-Verlag, New
  York-Heidelberg, 1973.
\newblock Translated from the French.

\bibitem[Shi21]{Shi21}
Ichiro Shimada.
\newblock Rational double points on {E}nriques surfaces.
\newblock {\em Sci. China Math.}, 64(4):665--690, 2021.

\end{thebibliography}
\end{document}